\documentclass[11pt]{article}

\usepackage[margin=1in]{geometry}
\usepackage[T1]{fontenc}
\usepackage{lmodern}
\usepackage{mathtools}
\usepackage{amssymb}
\usepackage{amsthm}
\usepackage{booktabs}
\usepackage{enumerate}
\usepackage{longtable}
\usepackage{pdflscape}
\usepackage{bm}
\usepackage[authoryear,round]{natbib}
\usepackage[hidelinks]{hyperref}
\allowdisplaybreaks

\theoremstyle{plain}
\newtheorem{theorem}{Theorem}[section]
\newtheorem{proposition}[theorem]{Proposition}
\newtheorem{corollary}[theorem]{Corollary}
\newtheorem{lemma}[theorem]{Lemma}
\theoremstyle{definition}
\newtheorem{definition}[theorem]{Definition}
\theoremstyle{remark}
\newtheorem{remark}[theorem]{Remark}

\numberwithin{equation}{section}

\DeclareMathOperator{\aff}{aff}

\DeclareMathOperator{\diag}{diag}
\DeclareMathOperator{\adj}{adj}
\DeclareMathOperator{\Crit}{Crit}
\DeclareMathOperator{\Mode}{Mode}

\newcommand{\R}{\mathbb{R}}

\newcommand{\keywords}[1]{\par\bigskip\noindent\textbf{Keywords:} #1\par}

\title{Bounds on the Number of Modes of a\\ Gaussian Mixture Density}
\author{Hien Duy Nguyen\thanks{Corresponding author: \href{mailto:h.nguyen5@latrobe.edu.au}{h.nguyen5@latrobe.edu.au}}\\[0.5ex]
\small School of Computing, Engineering and Mathematical Sciences, La Trobe University\\
\small Bundoora, VIC 3086, Australia\\
\small Institute of Mathematics for Industry, Kyushu University, Nishi Ward, Fukuoka 819-0395, Japan}
\date{}

\begin{document}

\maketitle

\begin{abstract}
We derive explicit upper bounds for the number of nondegenerate critical points of a $k$-component Gaussian mixture density in $\R^d$, and the number of modes when the modal set is finite, together with lower bounds. By normalizing the critical-point equations by a reference component, for $k\ge2$ we get the direct Pfaffian bound
\[
U_{\mathrm{het}}(d,k)=2^{\,d+\binom{k-1}{2}}\left(d+2\min(d,k-1)+1\right)^{k-1}.
\]
For the same parameter range, an exact elimination augmented by an algebraic reciprocal variable gives the alternative bound
\[
U_{\mathrm{aug}}(d,k)=
2^{\binom{k-1}{2}}(d+1)\left((2k-1)d+2k-1\right)^{k-1}.
\]
Thus, for $k\ge2$, the best critical-point bound is their minimum. A Morse-theoretic argument improves the corresponding finite-mode upper bound to
\[
\left\lfloor \frac{\min\{U_{\mathrm{het}}(d,k),U_{\mathrm{aug}}(d,k)\}+1}{2}\right\rfloor.
\]
In the homoscedastic case, for $k\ge2$, the direct bound improves to
\[
U_{\mathrm{hom}}(d,k)=2^{\,d+\binom{k-1}{2}}\left(d+\min(d,k-1)+1\right)^{k-1},
\]
an affine-rank reduction replaces $d$ by the affine rank of the component means, and an augmented homoscedastic reduction gives the dimension-free bound
\[
U_{\mathrm{aug,hom}}(k)=2^{\binom{k-1}{2}+1}(2k)^{k-1}.
\]
On the lower-bound side, for $d,k\ge 2$ we obtain
\[
L_{\mathrm{bin}}(d,k)=k+\max_{2\le r\le \min(d,k)}\binom{k}{r},
\]
together with a padding-product family that in particular implies the linear lower bound $d+k-1$, and a seed-closure principle that packages product and padding constructions. We further give explicit bounds for the number of connected components of the critical set.
\end{abstract}

\keywords{Gaussian mixture; mode counting; critical points; fewnomial bounds; modal clustering.}

\section{Introduction}\label{sec:intro}

Finite mixtures are a standard statistical framework for modeling heterogeneity. In the model-based clustering paradigm, a finite mixture represents the population as a superposition of latent subpopulations, and the component distributions are interpreted as clusters. For continuous data, Gaussian mixtures are the dominant example in both theory and software. Standard references include \citet{McLachlanPeel2000} and \citet{ScruccaFraleyMurphyRaftery2023}.

There is, however, a parallel line of work in which clusters are defined by the geometry of a density rather than by mixture components. In modal clustering, points belong to the same cluster when they ascend to the same local maximum of the density. Closely related formulations use domains of attraction of modes or connected components of density level sets. The literature is broad: \citet{Cheng1995} analyzed mean shift as a mode-seeking procedure; \citet{LiRayLindsay2007} developed a nonparametric modal clustering procedure based on ascent to estimated modes; density-level-set clustering is reviewed by \citet{Menardi2016} and studied algorithmically by \citet{Steinwart2011}; related density-based methods include the DBSCAN algorithm of \citet{EsterEtAl1996} and the Gaussian mixture-based variational approach of \citet{LeMinhArbelForbesNguyen2026}. A standard attraction of modal clustering is that the number of clusters is then tied to the underlying probability distribution rather than imposed externally \citep{Menardi2016}.

These two viewpoints are not equivalent. A mixture component need not correspond to a density bump, and a density bump need not correspond to a single component. \citet{LiRayLindsay2007} identify this mismatch as a limitation of naive component-based clustering, while \citet{Menardi2016} notes that model-based clustering does not provide a bijection between mixture components and density bumps. Questions about how many modes a Gaussian mixture can have were studied by \citet{CarreiraPerpinanWilliams2003}, who gave an early scale-space treatment of the mode-counting problem for Gaussian mixtures. For Gaussian mixtures, the geometric source of the mismatch was clarified by \citet{RayLindsay2005}, who introduced the ridgeline manifold and showed that all critical points of a multivariate Gaussian mixture lie on a lower-dimensional manifold determined by the component means and covariances. Building on that structure, \citet{RayRen2012} proved that a two-component Gaussian mixture in $\R^d$ can have as many as $d+1$ modes, and that this upper bound is sharp.

Since \citet{RayRen2012}, the two-component planar case has been refined further. \citet{KabataMatsumotoUchidaUeki2025} classify pairs of bivariate Gaussian densities by singularity type and show that the third mode occurs only in one of the three resulting classes. In particular, when the two covariance matrices are nonproportional but codirectional in their sense, the mixture has at most two modes. Related non-Gaussian bounds were obtained by \citet{AlexandrovichHolzmannRay2013}, who extended ridgeline methods to finite mixtures of elliptical distributions and derived upper bounds for the number of modes of two-component mixtures of multivariate $t$ distributions.

For general $d$ and $k$, \citet{AmendolaEngstromHaase2019} used the theory of fewnomials \citep{Khovanskii1991} to prove that the number of nondegenerate critical points of a $k$-component Gaussian mixture in $\R^d$ is at most
\[
U_{\mathrm{AEH}}(d,k)=2^{\,d+\binom{k}{2}}(5+3d)^k,
\]
and that the same quantity bounds the number of modes whenever the modal set is finite. They also proved the lower bound
\[
L_{\mathrm{AEH}}(d,k)=\binom{k}{d}+k
\]
for integers $d,k\ge 2$. Their paper further recalled a conjecture discussed at the 2011 American Institute of Mathematics (AIM) workshop on Singular Learning Theory, namely that the maximum possible number of modes of a $d$-dimensional Gaussian mixture with $k$ components should equal
\[
m_{\mathrm{AIM}}(d,k)=\binom{d+k-1}{d}
\]
\citep[Conjecture~3.1]{AmendolaEngstromHaase2019}; see also \citet{SteeleSturmfelsWatanabe2011}.

The present manuscript progresses in the same direction as \citet{AmendolaEngstromHaase2019}, but in four complementary ways. First, we obtain sharper upper bounds by exploiting a normalization-by-one-component argument that shortens the relevant Pfaff chain from length $k$ to length $k-1$. Second, we supplement that direct $d$-dimensional bound by an augmented reduction to $k$ variables, consisting of the $k-1$ mixture-ratio variables and one algebraic reciprocal variable. This reduction is motivated by Gale-type elimination for sparse systems; cf. \citet[Chapter~6]{Sottile2011}. Third, for finite modal sets, we combine generic exponential tilting with a Morse-theoretic handle-counting argument to gain a factor of two in the passage from critical points to modes; cf. \citet[Chapters~1--3]{Knudson2015}. Fourth, we record lower-bound constructions and isolate the connected geometry of degenerate critical points. More specifically, our main contributions are as follows.
\begin{enumerate}[1.]
    \item For $k\ge2$, we prove the improved nondegenerate critical-point bound
    \[
    U_{\mathrm{het}}(d,k)=2^{\,d+\binom{k-1}{2}}\bigl(d+2\min(d,k-1)+1\bigr)^{k-1}.
    \]
    \item For $k\ge2$, reducing the critical-point equations through an augmented reciprocal variable gives the alternative bound
    \[
    U_{\mathrm{aug}}(d,k)=2^{\binom{k-1}{2}}(d+1)\bigl((2k-1)d+2k-1\bigr)^{k-1}.
    \]
    Therefore
    \[
    |\Crit_{\mathrm{nd}}(\Phi)|\le \min\{U_{\mathrm{het}}(d,k),U_{\mathrm{aug}}(d,k)\}.
    \]
    For each fixed $k$,
    \[
    \frac{U_{\mathrm{aug}}(d,k)}{U_{\mathrm{het}}(d,k)}\to 0
    \qquad\text{as } d\to\infty.
    \]
    \item For $k\ge2$ and finite modal sets, we improve the resulting mode-count bound to
    \[
    |\Mode(\Phi)|\le
    \left\lfloor
    \frac{\min\{U_{\mathrm{het}}(d,k),U_{\mathrm{aug}}(d,k)\}+1}{2}
    \right\rfloor.
    \]
    The factor of two comes from the fact that a Morse Gaussian mixture with $N$ critical points has at most $\lfloor(N+1)/2\rfloor$ local maxima.
    \item For $k\ge2$, in the homoscedastic case we improve the direct $d$-dimensional bound further to
    \[
    U_{\mathrm{hom}}(d,k)=2^{\,d+\binom{k-1}{2}}\bigl(d+\min(d,k-1)+1\bigr)^{k-1},
    \]
    sharpen it by an affine-rank reduction that replaces $d$ by $r=\dim \aff\{\mu_1,\dots,\mu_k\}$, and obtain the additional dimension-free augmented bound
    \[
    U_{\mathrm{aug,hom}}(k)=2^{\binom{k-1}{2}+1}(2k)^{k-1}.
    \]
    \item On the lower-bound side, for $d,k\ge2$ we prove
    \[
    L_{\mathrm{bin}}(d,k)=k+\max_{2\le r\le \min(d,k)}\binom{k}{r}
    \]
    by monotonicity under dimension lifting and lifted lower-dimensional constructions, and we complement this by the padding-product family
    \[
    L_{\mathrm{pp}}(d,k)=\max_{1\le s\le \min(d,\lfloor \log_2 k\rfloor)}\left(k-2^s+\left(\left\lfloor\frac{d}{s}\right\rfloor+1\right)^{s-(d\bmod s)}\left(\left\lfloor\frac{d}{s}\right\rfloor+2\right)^{d\bmod s}\right).
    \]
    In particular, the lower bound $d+k-1$ holds for all $d\ge 1$ and $k\ge 2$. We also state a seed-closure theorem that packages dimension lifting, remote padding, and Cartesian products of seed constructions.
    \item We obtain explicit upper bounds for the number of connected components of the critical set.
\end{enumerate}

The manuscript is organized as follows. Section~\ref{sec:prelim} collects notation, definitions, and the main auxiliary observations used throughout the paper. Section~\ref{sec:upper-mode-bounds} contains upper bounds for the number of modes and nondegenerate critical points, including the augmented reduction, the finite-mode Morse improvement, the homoscedastic and affine-rank refinements, and a numerical comparison of the explicit upper bounds. Section~\ref{sec:lower-mode-bounds} turns to the lower-bound constructions and numerical comparisons. Section~\ref{sec:modal-regions} studies the problem of bounding the connected components of the critical set. Section~\ref{sec:discussion} concludes with the scope of our outcomes. Appendix~\ref{app:external} records important external theorems on which our proofs depend, and Appendix~\ref{app:deferred} collects deferred proofs.
\section{Notation and technical preliminaries}\label{sec:prelim}

\subsection{Elementary definitions}\label{subsec:definitions}

Throughout, unless explicitly stated otherwise, $d\ge1$ and $k\ge1$ are integers. The main upper bounds based on a reference component assume $k\ge2$. Throughout, $\dim$ denotes Euclidean dimension, $\aff(A)$ the affine hull of a set $A\subseteq\R^d$, $\mathrm{int}(A)$ its interior, and $\partial A$ its boundary. For a map $F:\R^n\to\R^m$, $DF(x)$ denotes its Jacobian matrix at $x$; for a scalar-valued $\mathcal{C}^1$ function $f$ (that is, a continuously differentiable function), $\nabla f(x)$ denotes its gradient; and for a scalar-valued $\mathcal{C}^2$ function $f$ (that is, a twice continuously differentiable function), $D^2f(x)$ denotes its Hessian matrix. We write $\binom{a}{b}$ for binomial coefficients and $\lfloor t\rfloor$ for the floor of a real number $t$. For integers $a\ge 0$ and $b\ge 1$, the quotient and remainder are written as $\lfloor a/b\rfloor$ and $a\bmod b$, so that
\[
a=b\left\lfloor\frac{a}{b}\right\rfloor +(a\bmod b),
\qquad
0\le a\bmod b<b.
\]
All connectedness statements are with respect to the Euclidean topology.

Let
\[
\Phi(x)=\sum_{i=1}^k \alpha_i\phi_i(x), \qquad x\in \R^d,
\]
be a Gaussian mixture density, where $\alpha_i>0$, $\sum_{i=1}^k \alpha_i=1$, and
\[
\phi_i(x)=\frac{1}{(2\pi)^{d/2}\det(\Sigma_i)^{1/2}}
\exp\!\left(-\frac12(x-\mu_i)^\top\Sigma_i^{-1}(x-\mu_i)\right)
\]
with $\mu_i\in\R^d$ and each $\Sigma_i$ positive definite. We write
\[
\Crit(\Phi)=\{x\in\R^d:\nabla \Phi(x)=0\}
\]
for the critical set. To handle possible connected sets of local maxima, we define the modal subset of $\Crit(\Phi)$.

\begin{definition}
The modal set of $\Phi$ is
\[
\Mode(\Phi)=\left\{x\in\R^d:\exists r>0\ \forall y\in\R^d\ \left(\|y-x\|<r\Rightarrow \Phi(y)\le \Phi(x)\right)\right\}.
\]
Its Euclidean connected components are called the connected modal regions of $\Phi$.
\end{definition}

We will also require the corresponding nondegenerate subsets of the critical and modal sets.

\begin{definition}
A point $x_\ast\in\R^d$ is a mode of $\Phi$ if it is a local maximum of $\Phi$. A critical point $x_\ast\in\Crit(\Phi)$ is nondegenerate if the Hessian $D^2\Phi(x_\ast)$ is nonsingular. We write
\[
\Crit_{\mathrm{nd}}(\Phi)=\left\{x\in\Crit(\Phi): \det D^2\Phi(x)\neq 0\right\}
\]
for the set of nondegenerate critical points, and
\[
\Mode_{\mathrm{nd}}(\Phi)=\Mode(\Phi)\cap \Crit_{\mathrm{nd}}(\Phi)
\]
for the set of nondegenerate modes.
\end{definition}

\begin{remark}\label{rem:mode-nd}
The set $\Mode(\Phi)$ records all local-maximal points of $\Phi$, including degenerate maxima and possible connected modal sets. The set $\Crit_{\mathrm{nd}}(\Phi)$ consists of the critical points with nonsingular Hessian, and $\Mode_{\mathrm{nd}}(\Phi)$ is the subset of those that are local maxima. In particular, every point of $\Mode_{\mathrm{nd}}(\Phi)$ is an isolated local maximum. The sets $\Mode(\Phi)$ and $\Mode_{\mathrm{nd}}(\Phi)$ agree when every mode is nondegenerate.
\end{remark}

\begin{remark}\label{rem:m-vs-Mode}
We retain the notation $m(d,k)$ for the supremum of $|\Mode(\Phi)|$ over $k$-component Gaussian mixtures in $\R^d$ whose modal set is finite. The homoscedastic analogue is denoted $m_{\mathrm{hom}}(d,k)$. Thus these are finite-modal-set counting quantities, while $\Mode(\Phi)$ denotes the modal set of one fixed mixture density $\Phi$ and may include connected modal regions.
\end{remark}

\subsection{A compact containment for the critical set}\label{subsec:compact-critical}

The next basic lemma is standard but useful. It shows that all critical points lie in a compact set depending only on the component parameters.

\begin{lemma}\label{lem:compact-critical}
For every Gaussian mixture density $\Phi$,
\[
\Crit(\Phi)\subseteq \mathcal M=\left\{\left(\sum_{i=1}^k w_i\Sigma_i^{-1}\right)^{-1}
\left(\sum_{i=1}^k w_i\Sigma_i^{-1}\mu_i\right): w=(w_1,\dots,w_k)\in \Delta_k\right\},
\]
where $\Delta_k=\{w\in[0,1]^k:\sum_{i=1}^k w_i=1\}$ is the probability simplex. In particular, $\Crit(\Phi)$ is compact.
\end{lemma}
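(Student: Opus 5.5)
The plan is to compute the gradient directly and read off the weights. For each component,
\[
\nabla\phi_i(x)=-\phi_i(x)\,\Sigma_i^{-1}(x-\mu_i),
\]
so
\[
\nabla\Phi(x)=-\sum_{i=1}^k\alpha_i\phi_i(x)\Sigma_i^{-1}(x-\mu_i).
\]
Take a critical point $x\in\Crit(\Phi)$. Every $\alpha_i\phi_i(x)$ is strictly positive, so $S(x)=\sum_j\alpha_j\phi_j(x)=\Phi(x)>0$. We can therefore define
\[
w_i=\frac{\alpha_i\phi_i(x)}{\Phi(x)}.
\]
These weights are positive and sum to one, so $w\in\Delta_k$; in fact $w$ lies in the relative interior of $\Delta_k$. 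Dividing $\nabla\Phi(x)=0$ by $\Phi(x)$ gives
\[
\Bigl(\sum_i w_i\Sigma_i^{-1}\Bigr)x=\sum_i w_i\Sigma_i^{-1}\mu_i.
\]
The matrix $\sum_i w_i\Sigma_i^{-1}$ is a convex combination of positive definite matrices, so it is positive definite and hence invertible. Solving for $x$ shows $x\in\mathcal M$, which proves the containment.

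For compactness, consider the map
\[
g(w)=\Bigl(\sum_i w_i\Sigma_i^{-1}\Bigr)^{-1}\sum_i w_i\Sigma_i^{-1}\mu_i.
\]
It is defined and continuous on all of $\Delta_k$, including the boundary. The reason is that for every $w\in\Delta_k$, at least one $w_i>0$ and all terms are positive semidefinite, so the matrix is positive definite, and matrix inversion is continuous on invertible matrices. Hence $\mathcal M=g(\Delta_k)$ is the continuous image of a compact set and is compact.

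Next, $\Crit(\Phi)$ is the zero set of the continuous map $\nabla\Phi$, so it is closed in $\R^d$. As a closed subset of the compact set $\mathcal M$, it is compact.

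The only point needing care is that $\sum_i w_i\Sigma_i^{-1}$ stays invertible on the whole closed simplex, not just its interior, since that is what makes $\mathcal M$ a compact image. This follows from $x^\top\bigl(\sum_i w_i\Sigma_i^{-1}\bigr)x\ge \max_i\bigl(w_i\,x^\top\Sigma_i^{-1}x\bigr)>0$ for $x\neq0$. Everything else is routine.
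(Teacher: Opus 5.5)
Your proposal is correct and follows the paper's proof essentially step for step. You normalize by $\Phi(x)$ to get simplex weights, solve the resulting linear system, and conclude compactness from continuity on the compact simplex together with closedness of the zero set of $\nabla\Phi$. Your explicit check that $\sum_i w_i\Sigma_i^{-1}$ stays positive definite on the boundary of $\Delta_k$ fills in a detail the paper leaves implicit.
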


\begin{proof}
If $x\in\Crit(\Phi)$, set
\[
w_i=\frac{\alpha_i\phi_i(x)}{\Phi(x)},\qquad i=1,\dots,k.
\]
Then $w\in\Delta_k$ and the critical-point equation becomes
\[
\sum_{i=1}^k w_i\Sigma_i^{-1}(\mu_i-x)=0.
\]
Rearranging gives the displayed representation of $x$. Since $\Delta_k$ is compact and the displayed map is continuous, $\mathcal M$ is compact. The set $\Crit(\Phi)$ is closed because it is the zero set of the continuous map $\nabla\Phi$.
\end{proof}

\begin{lemma}\label{lem:modal-critical}
For every Gaussian mixture density $\Phi$,
\[
\Mode(\Phi)\subseteq \Crit(\Phi).
\]
In particular, the modal set is contained in a compact set.
\end{lemma}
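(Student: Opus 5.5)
The plan is to use Fermat's first-order condition for interior local extrema, and then combine it with Lemma~\ref{lem:compact-critical}. First I will check that $\Phi$ is smooth on all of $\R^d$. Each $\phi_i$ is the exponential of a quadratic polynomial times a positive constant, so it is $\mathcal{C}^\infty$. Hence $\Phi$, a finite positive combination of the $\phi_i$, is $\mathcal{C}^\infty$, and in particular $\nabla\Phi$ exists everywhere.

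Next, fix $x\in\Mode(\Phi)$, and take $r>0$ with $\Phi(y)\le\Phi(x)$ whenever $\|y-x\|<r$. For any unit vector $v\in\R^d$, define $g(t)=\Phi(x+tv)$ for $|t|<r$. This function is differentiable and has a local maximum at $t=0$ in the open interval $(-r,r)$. The one-variable argument is as follows. For $t>0$ the difference quotient $(g(t)-g(0))/t$ is $\le 0$, and for $t<0$ it is $\ge 0$. Letting $t\to0$ from each side gives $g'(0)\le 0$ and $g'(0)\ge0$, so $g'(0)=0$. By the chain rule, $g'(0)=\nabla\Phi(x)^\top v$. Since this vanishes for every unit vector $v$, in particular for the standard basis vectors, we get $\nabla\Phi(x)=0$, that is, $x\in\Crit(\Phi)$.

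For the second assertion, Lemma~\ref{lem:compact-critical} shows that $\Crit(\Phi)$ is contained in the compact set $\mathcal M$, and is itself compact. So $\Mode(\Phi)\subseteq\Crit(\Phi)\subseteq\mathcal M$ is contained in a compact set. There is no real obstacle. The only point needing care is that the definition of $\Mode(\Phi)$ uses a full open ball around $x$ (the ambient space is all of $\R^d$, with no boundary), so the two-sided limit in the Fermat argument is legitimate. I would not claim that $\Mode(\Phi)$ is closed, since only containment in a compact set is asserted.
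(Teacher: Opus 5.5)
Your proof is correct and takes the same route as the paper: the first-order (Fermat) condition at a local maximum of the smooth function $\Phi$ gives $\Mode(\Phi)\subseteq\Crit(\Phi)$, and the compact containment then follows from Lemma~\ref{lem:compact-critical}. The only difference is that you write out the one-variable difference-quotient argument, which the paper leaves implicit.
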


\begin{proof}
A local maximum of a $\mathcal{C}^1$ function has vanishing gradient, so every point of $\Mode(\Phi)$ is critical. The containment in a compact set then follows from Lemma~\ref{lem:compact-critical}.
\end{proof}

\begin{proposition}\label{prop:nonsingleton-critical-components}
Every non-singleton connected component of $\Crit(\Phi)$ lies in the degenerate locus
\[
\{x\in\R^d:\nabla\Phi(x)=0,\ \det D^2\Phi(x)=0\}.
\]
\end{proposition}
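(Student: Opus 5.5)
The plan is to show that nondegenerate critical points are isolated in $\Crit(\Phi)$, so they cannot sit inside a connected component with more than one point. Let $C$ be a connected component of $\Crit(\Phi)$ with at least two points, and suppose some $x_\ast\in C$ has $\det D^2\Phi(x_\ast)\neq0$. We aim for a contradiction.

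The key step is the inverse function theorem applied to $F=\nabla\Phi:\R^d\to\R^d$. Since $\Phi$ is real-analytic, $F$ is $\mathcal C^1$, and $DF(x_\ast)=D^2\Phi(x_\ast)$ is invertible. So there is an open ball $B$ around $x_\ast$ on which $F$ is injective. Because $F(x_\ast)=0$, injectivity gives $\Crit(\Phi)\cap B=\{x_\ast\}$. (A more elementary route also works: $F(x)=D^2\Phi(x_\ast)(x-x_\ast)+o(\|x-x_\ast\|)$, and this is nonzero for small $x\neq x_\ast$.)

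Next we turn isolation into a contradiction with connectedness. Choose $r>0$ with $\overline{B(x_\ast,r)}\subseteq B$. The set $\{x_\ast\}$ is closed in $C$. It is also relatively open, since $C\cap B(x_\ast,r)=\{x_\ast\}$. As $C$ is connected and $x_\ast\in C$, this forces $C=\{x_\ast\}$, contradicting that $C$ is not a singleton. Hence every point of $C$ satisfies $\nabla\Phi(x)=0$ and $\det D^2\Phi(x)=0$, which is the claimed containment.

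No step here is really an obstacle. The only care needed is to use the relative topology on $C$ in the clopen argument, rather than arguing about open subsets of $\R^d$.
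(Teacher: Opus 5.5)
Your proof is correct and follows the paper's argument: the inverse function theorem applied to $\nabla\Phi$ makes every nondegenerate critical point isolated in $\Crit(\Phi)$, so its connected component must be a singleton. Your relatively-clopen step is the one detail the paper leaves implicit, and you have written it out.
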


\begin{proof}
Let $K$ be a connected component of $\Crit(\Phi)$ containing more than one point, and let $x\in K$. If $D^2\Phi(x)$ were nonsingular, then $x$ would be an isolated critical point by the inverse function theorem applied to $\nabla\Phi$. The connected component of $x$ in $\Crit(\Phi)$ would then be the singleton $\{x\}$, contrary to the hypothesis on $K$. Hence every point of $K$ is degenerate.
\end{proof}

The next observation explains why the finiteness of the number of modes for a Gaussian-mixture density is fundamentally a degeneracy problem.

\begin{proposition}\label{prop:degeneracy-obstruction}
If $|\Mode(\Phi)|=\infty$, then $\Phi$ has a degenerate critical point.
\end{proposition}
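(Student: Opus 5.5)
We argue by contraposition: assume every critical point of $\Phi$ is nondegenerate and show that $\Mode(\Phi)$ is finite. By Lemma~\ref{lem:modal-critical}, $\Mode(\Phi)\subseteq\Crit(\Phi)$. It therefore suffices to show that $\Crit(\Phi)$ is finite under the nondegeneracy hypothesis.

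Suppose every $x\in\Crit(\Phi)$ satisfies $\det D^2\Phi(x)\neq0$. Fix such an $x$. Since $\Phi$ is smooth, the map $\nabla\Phi:\R^d\to\R^d$ is $\mathcal C^1$ with Jacobian $D^2\Phi(x)$ nonsingular. The inverse function theorem then gives a neighbourhood $U_x$ of $x$ on which $\nabla\Phi$ is injective. Hence $U_x\cap\Crit(\Phi)=\{x\}$, so every critical point is isolated, i.e. $\Crit(\Phi)$ is a discrete set. (This is the same reasoning as in Proposition~\ref{prop:nonsingleton-critical-components}.)

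By Lemma~\ref{lem:compact-critical}, $\Crit(\Phi)$ is compact. The open cover $\{U_x\}_{x\in\Crit(\Phi)}$ has a finite subcover. Each $U_x$ contains exactly one critical point, so $\Crit(\Phi)$ is finite, and therefore so is $\Mode(\Phi)$. This contradicts $|\Mode(\Phi)|=\infty$, so some critical point must be degenerate.

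An equivalent direct route: an infinite subset of the compact set $\Crit(\Phi)$ has an accumulation point $x_\ast$. This point lies in $\Crit(\Phi)$ because that set is closed, and it is not isolated. By the inverse function theorem it therefore cannot be nondegenerate. In fact this yields a degenerate critical point that is a limit of modes; since $\Mode(\Phi)$ need not be closed, we do not claim $x_\ast$ is itself a mode.

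Nothing here is delicate. The only points needing care are that compactness (Lemma~\ref{lem:compact-critical}) is what turns "isolated" into "finite", and that the inverse function theorem must be applied to the $\mathcal C^1$ map $\nabla\Phi$.
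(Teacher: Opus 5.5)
Your proof is correct and takes essentially the same approach as the paper. The paper's proof is exactly your ``direct route'': an infinite modal set is an infinite subset of the compact set $\Crit(\Phi)$, so it has an accumulation point in $\Crit(\Phi)$, and by the inverse function theorem that point cannot be nondegenerate. Your contrapositive finite-subcover argument is the same compactness-plus-isolation idea in another form.
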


\begin{proof}
Every mode is a critical point, so an infinite modal set is an infinite subset of the compact set $\Crit(\Phi)$ from Lemma~\ref{lem:compact-critical}. Hence it has an accumulation point $x_\ast\in\Crit(\Phi)$. A nondegenerate critical point is isolated by the inverse function theorem applied to $\nabla\Phi$. Therefore $x_\ast$ must be degenerate.
\end{proof}

\subsection{Morse theory}\label{subsec:morse-terminology}

We use only standard Morse-theoretic facts regarding handle decompositions. A $\mathcal{C}^2$ function on an open subset of $\R^d$ is Morse if all its critical points are nondegenerate. If $p$ is a nondegenerate critical point, its Morse index is the number of negative eigenvalues of the Hessian at $p$, counted with multiplicity. Thus a local maximum has Morse index $d$. A value $a\in\R$ is a regular value if the level set $\{h=a\}$ contains no critical point. We write $\pi_0(A)$ for the set of path-connected components of a topological space $A$.

Let $\mathbb D^j=\{u\in\R^j:\|u\|\le1\}$ denote the closed unit $j$-disk, with $\mathbb D^0$ a point and $\partial\mathbb D^0=\varnothing$. We say that a $j$-handle in dimension $d$ is a copy of $\mathbb D^j\times \mathbb D^{d-j}$ attached along $\partial \mathbb D^j\times \mathbb D^{d-j}$, where its index is $j$. We use the standard handle-attachment theorem for Morse functions in the form recorded in Theorem~\ref{thm:morse-handle}. In particular, when a critical value of a Morse function on a $d$-manifold is crossed upward in sublevel sets, one attaches handles whose indices are the Morse indices of the critical points at that level. That is, if the level contains critical points of Morse indices $\lambda_1,\dots,\lambda_s$, then the upper sublevel set is diffeomorphic, after smoothing corners, to the lower sublevel set with $\lambda_i$-handles attached for $i=1,\dots,s$. Equivalently, when a critical value of $h$ is crossed downward in the superlevel sets $\{h\ge t\}$, one applies the sublevel-set theorem to $-h$, and a critical point of Morse index $\lambda$ for $h$ contributes a $(d-\lambda)$-handle. These definitions and the handle-decomposition theorem are standard; see \citet[Chapters~1--3]{Knudson2015}.

\subsection{The normalized critical system}\label{subsec:normalized-system}

A Pfaff chain of length $q$ and degree $M$ on an open set $U\subseteq\R^n$ is a tuple of smooth functions $y_1,\dots,y_q$ such that for every $1\le i\le q$ and $1\le j\le n$,
\[
\frac{\partial y_i}{\partial x_j}(x)=P_{ij}\bigl(x,y_1(x),\dots,y_i(x)\bigr)
\]
with $P_{ij}$ a polynomial of total degree at most $M$. A Pfaffian zero set is a set cut out by finitely many equations
\[
F_1(x)=\cdots=F_\ell(x)=0
\]
where the $F_j$ are Pfaffian functions with respect to a common Pfaff chain.

Fix the $k$th component as a reference and write $q=k-1$. For $i=1,\dots,q$, define the positive ratios
\[
\rho_i(x)=\frac{\alpha_i\phi_i(x)}{\alpha_k\phi_k(x)}.
\]
Each $\rho_i$ is a positive constant times the exponential of a quadratic polynomial; explicitly there exist constants $\beta_i>0$ and quadratic polynomials $q_i(x)$ such that
\[
\rho_i(x)=\beta_i e^{q_i(x)}.
\]
Hence
\[
\frac{\partial \rho_i}{\partial x_j}(x)=\ell_{ij}(x)\rho_i(x),
\]
where each $\ell_{ij}$ is affine linear. Thus $\rho_1,\dots,\rho_q$ form a Pfaff chain of degree $2$ in the standard sense fixed above. If all covariance matrices are equal, then each $q_i$ is affine rather than quadratic and the chain degree drops to $1$.

Now set
\[
b_i(x)=-\Sigma_i^{-1}(x-\mu_i),\qquad i=1,\dots,k.
\]
Since
\[
\nabla \Phi(x)=\sum_{i=1}^k \alpha_i\phi_i(x)b_i(x),
\]
dividing by the positive factor $\alpha_k\phi_k(x)$ yields the equivalent system
\begin{equation}\label{eq:normalized-system}
b_k(x)+\sum_{i=1}^{q}\rho_i(x)b_i(x)=0.
\end{equation}
For $r=1,\dots,d$, let $b_{i,r}$ denote the $r$th coordinate of $b_i$ and define
\[
Q_r(x,y)=b_{k,r}(x)+\sum_{i=1}^{q} y_i b_{i,r}(x).
\]
Then each $Q_r$ is a polynomial of total degree at most $2$ in $(x,y)$, and the critical-point equations become
\[
Q_1(x,\rho(x))=\cdots=Q_d(x,\rho(x))=0.
\]
This is the Pfaffian system to which we apply Theorem~\ref{thm:Khov}.

\section{Upper bounds on the number of modes}\label{sec:upper-mode-bounds}

We first collect bounds for $\Crit_{\mathrm{nd}}(\Phi)$, and then convert them into sharper finite-mode bounds.

\subsection{A normalized Pfaffian bound}\label{subsec:general-bound}

The first quantitative improvement over \citet{AmendolaEngstromHaase2019} comes from the normalization in Section~\ref{subsec:normalized-system}. Their fewnomial formulation uses $k$ exponential terms, while the normalized system uses only the $k-1$ ratios $\rho_1,\dots,\rho_{k-1}$.

\begin{theorem}\label{thm:main-general}
Let $\Phi$ be a Gaussian mixture density with $k\ge 2$ components in $\R^d$. Then
\[
|\Crit_{\mathrm{nd}}(\Phi)|\le U_{\mathrm{het}}(d,k)=2^{\,d+\binom{k-1}{2}}\bigl(d+2\min(d,k-1)+1\bigr)^{k-1}.
\]
\end{theorem}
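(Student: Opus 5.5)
The plan is to apply the Khovanskii fewnomial/Pfaffian bound (Theorem~\ref{thm:Khov}) directly to the normalized critical system of Section~\ref{subsec:normalized-system}. I take that theorem in its standard form. Let $y_1,\dots,y_q$ be a Pfaff chain of length $q$ and degree $\alpha$ on a domain $U\subseteq\R^n$, and let $Q_1,\dots,Q_n$ be polynomials in $(x,y)$ of degrees $\beta_1,\dots,\beta_n$. Then the number of nondegenerate solutions in $U$ of $Q_r(x,y(x))=0$, $r=1,\dots,n$, is at most
\[
2^{\binom{q}{2}}\,\beta_1\cdots\beta_n\,\bigl(\min(n,q)\,\alpha+\beta_1+\cdots+\beta_n-n+1\bigr)^{q}.
\]

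\emph{Step 1 (setting up the system).} Take $U=\R^d$, $n=d$, $q=k-1$, and the chain $\rho_1,\dots,\rho_{k-1}$. As shown in Section~\ref{subsec:normalized-system}, $\partial_j\rho_i=\ell_{ij}\rho_i$ with $\ell_{ij}$ affine, so each derivative is a polynomial of degree at most $2$ in $(x,\rho_1,\dots,\rho_i)$, and $\alpha=2$. Each $Q_r(x,y)=b_{k,r}(x)+\sum_i y_ib_{i,r}(x)$ has degree at most $2$, so $\beta_r=2$ for all $r$.

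\emph{Step 2 (arithmetic).} Substituting these values gives
\[
2^{\binom{k-1}{2}}\cdot 2^{d}\cdot\bigl(2\min(d,k-1)+2d-d+1\bigr)^{k-1}=U_{\mathrm{het}}(d,k).
\]

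\emph{Step 3 (matching nondegeneracy notions).} This is the only real point to check, and I expect it to be the main, though mild, obstacle. Khovanskii counts zeros of $G(x)=Q(x,\rho(x))$ at which the Jacobian $DG$ is nonsingular, whereas $\Crit_{\mathrm{nd}}(\Phi)$ is defined via the Hessian of $\Phi$. Write $\nabla\Phi=gG$ with $g=\alpha_k\phi_k>0$. Differentiating gives
\[
D^2\Phi=g\,DG+G\,\nabla g^\top .
\]
At any critical point $G=0$, so $D^2\Phi(x)=g(x)\,DG(x)$. Hence $x\in\Crit_{\mathrm{nd}}(\Phi)$ exactly when $x$ is a nondegenerate zero of $G$. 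Conversely, equation \eqref{eq:normalized-system} shows that every zero of $G$ is a critical point of $\Phi$.

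Combining the three steps, $|\Crit_{\mathrm{nd}}(\Phi)|$ equals the number of nondegenerate zeros of $G$ on $\R^d$, which is at most $U_{\mathrm{het}}(d,k)$. One caveat remains: the precise form of Theorem~\ref{thm:Khov} in Appendix~\ref{app:external} may place the $\min(n,q)$ factor differently or require a chain defined on all of $\R^n$. The chain is entire here, and I would verify that the constant matches the displayed formula, which it does under the standard statement.
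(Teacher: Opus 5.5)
Your proposal is correct and follows essentially the same route as the paper: you apply Theorem~\ref{thm:Khov} to the normalized system with $n=d$, $q=k-1$, $M=2$, $p_r=2$, and you identify nondegenerate roots with $\Crit_{\mathrm{nd}}(\Phi)$ via $D^2\Phi=g\,DG$ at zeros of $G$. Your form of the constant, $L=\sum_r(p_r-1)+\min(n,q)M+1$, matches the appendix statement exactly, so the caveat you raise does not arise.
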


\begin{proof}
The functions $\rho_1,\dots,\rho_q$ form a Pfaff chain of length $q=k-1$ and degree $2$ on $\R^d$, because for every $i$ and $j$ one has
\[
\frac{\partial \rho_i}{\partial x_j}(x)=\ell_{ij}(x)\rho_i(x)
\]
with $\ell_{ij}$ affine linear. The equations
\[
Q_1(x,\rho(x))=\cdots=Q_d(x,\rho(x))=0
\]
therefore form a Pfaffian system in the $d$ variables $x_1,\dots,x_d$, with $n=d$ and polynomial degrees $p_r=2$ for all $r$.

Applying Theorem~\ref{thm:Khov} with
\[
n=d,\qquad q=k-1,\qquad M=2,\qquad p_1=\cdots=p_d=2,
\]
we obtain at most
\[
2^{q(q-1)/2}\,2^d\,L^q
\]
nondegenerate roots, where
\[
L=\sum_{r=1}^d(2-1)+\min(d,k-1)\cdot 2+1=d+2\min(d,k-1)+1.
\]
Thus the normalized system has at most
\[
2^{\binom{k-1}{2}}2^d\bigl(d+2\min(d,k-1)+1\bigr)^{k-1}=U_{\mathrm{het}}(d,k)
\]
nondegenerate roots.

It remains to identify these roots with the nondegenerate critical points of $\Phi$. Define
\[
G(x)=b_k(x)+\sum_{i=1}^{q}\rho_i(x)b_i(x)=\frac{1}{\alpha_k\phi_k(x)}\nabla\Phi(x).
\]
Hence $G(x)=0$ if and only if $x\in\Crit(\Phi)$. If $x_\ast\in\Crit(\Phi)$, then $\nabla\Phi(x_\ast)=0$, so differentiating the identity $G=(\alpha_k\phi_k)^{-1}\nabla\Phi$ gives
\[
DG(x_\ast)=\frac{1}{\alpha_k\phi_k(x_\ast)}D^2\Phi(x_\ast).
\]
The scalar factor is positive, so $DG(x_\ast)$ is nonsingular if and only if $D^2\Phi(x_\ast)$ is nonsingular. Therefore nondegenerate roots of the normalized system are exactly the points of $\Crit_{\mathrm{nd}}(\Phi)$, and the bound follows.
\end{proof}

\subsection{Finite modal sets}\label{subsec:morse-factor}

The preceding theorem bounds nondegenerate critical points. The argument in this subsection improves the standard perturbation passage from nondegenerate critical points to modes when the modal set is finite.

\begin{lemma}\label{lem:low-superlevel-connected}
Let $h$ be a Morse Gaussian mixture density on $\R^d$. Then there exists a regular value $\tau>0$, below all critical values of $h$, such that
\[
S_\tau=\{x\in\R^d:h(x)\ge \tau\}
\]
is compact, connected, and contains every critical point of $h$.
\end{lemma}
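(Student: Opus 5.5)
The plan is to choose $\tau$ below the minimum of $h$ over the compact critical set, check that $S_\tau$ is compact and contains all critical points, and then prove connectedness by flowing along the gradient. By Lemma~\ref{lem:compact-critical}, $\Crit(h)$ is compact. Since $h$ is Morse, its critical points are isolated, so $\Crit(h)$ is finite. Let $c_{\min}=\min_{x\in\Crit(h)}h(x)>0$, which is positive because $h>0$ everywhere. Any $\tau\in(0,c_{\min})$ is automatically a regular value, since no critical point has value $\tau$; it also lies below every critical value and gives $\Crit(h)\subseteq S_\tau$. Compactness holds because $h$ is continuous and $h(x)\to0$ as $\|x\|\to\infty$: each component Gaussian decays, so $\{h\ge\tau\}$ is closed and bounded for every $\tau>0$.

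For connectedness, I would use the gradient flow of $h$ restricted to $S_\tau$. Let $x\in S_\tau$ and follow the trajectory of $\dot x=\nabla h(x)$. Along it $h$ is nondecreasing, so the trajectory stays in the compact set $S_\tau$ and exists for all forward time. Its $\omega$-limit set is nonempty, compact, connected and consists of critical points. Because $\Crit(h)$ is finite, the limit is a single critical point. Hence every point of $S_\tau$ is joined by a path inside $S_\tau$ to some critical point, and it suffices to show that all critical points lie in one path component of $S_\tau$.

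This last step is the main obstacle, and I would handle it with a mountain-pass argument. Suppose $S_\tau$ had two path components, $A$ and $B$. Since $\tau$ is a regular value, $S_\tau$ is a smooth compact manifold with boundary, so its path components are finitely many and coincide with its connected components; each of them is compact and contains a local maximum of $h$, namely the maximum of $h$ on that component. That maximum is interior because $h>\tau$ there, or else $h\equiv\tau$, which is impossible. Consider the connected set $\R^d$ and the minimax value
\[
c=\sup_{\gamma}\min_{t\in[0,1]}h(\gamma(t)),
\]
taken over paths $\gamma$ from the maximum in $A$ to the maximum in $B$. Any such path must leave $S_\tau$, since $A$ and $B$ are distinct components, so $c\le\tau$. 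Paths that stay within a compact region suffice, so $c>0$. The function $h$ satisfies the Palais--Smale condition on superlevel sets $\{h\ge c-\epsilon\}$ with $c-\epsilon>0$, because these sets are compact. The mountain pass theorem, applied to $-h$, then yields a critical point at level $c\le\tau<c_{\min}$, contradicting the choice of $\tau$.

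If I wanted to avoid citing the mountain pass theorem, I would instead argue with the deformation lemma. There are no critical values in $(0,c_{\min})$, so the gradient flow deforms $\{h\ge\tau'\}$ onto $\{h\ge\tau\}$ for $0<\tau'<\tau$. For $\tau'$ small, $\{h\ge\tau'\}$ contains a large ball around all the means, and I would show it is connected via an explicit path between the component means along which $h$ stays above a small positive constant. The deformation preserves the number of components, so $S_\tau$ is connected.
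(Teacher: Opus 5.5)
Your argument is correct, but it proves connectedness by a different mechanism from the paper's. The paper uses the Gaussian tails directly. It shows $x\cdot\nabla h(x)<0$ for $\|x\|\ge R$, takes $\tau$ below both $\min_{\overline B_R}h$ and all critical values, and joins each point of $S_\tau$ to the connected set $\overline B_R\subseteq S_\tau$ along its radial segment, on which $h$ increases toward the ball.

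You instead use only three facts: $h>0$, $h\to0$ at infinity, and $h$ has no critical values in $(0,c_{\min})$. Two distinct path components would give a mountain-pass level $c$ with $0<c\le\tau<\min\{h(p_A),h(p_B)\}$. The $(\mathrm{PS})$ condition for $-h$ at level $-c$ holds because $\{h\ge c/2\}$ is compact. So $c$ would be a critical value below $c_{\min}$, a contradiction.

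What your route buys is generality and a slightly stronger conclusion: every $\tau\in(0,c_{\min})$ works, and finiteness of $\Crit(h)$ plays no role in this step. The cost is invoking the mountain pass theorem in its $(\mathrm{PS})_c$ form, where the paper needs only a one-line radial computation.

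Your gradient-flow paragraph is then superfluous, since the mountain-pass contradiction already excludes two path components. Conversely, that paragraph alone would finish the proof if you also required $\tau<\min_{\overline B}h$ for a closed ball $\overline B\supseteq\Crit(h)$. Each point of $S_\tau$ then flows inside $S_\tau$ to a point of the connected set $\overline B\subseteq S_\tau$; this is the paper's idea with flow lines in place of rays.

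Your fallback sketch is incomplete as written. A large ball in $\{h\ge\tau'\}$ together with a path between the means does not by itself make $\{h\ge\tau'\}$ connected, unless it is combined with the flow argument in exactly this way.
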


\begin{proof}
See Appendix~\ref{subsec:low-superlevel-proof}.
\end{proof}

\begin{lemma}[Connected components under handle attachment]\label{lem:handle-components}
Let $A$ be a $d$-manifold with boundary, possibly empty, with finitely many path-connected components, and let $B$ be obtained from $A$ by attaching finitely many handles. Let $h_0$ and $h_1$ be the numbers of $0$-handles and $1$-handles attached. Then
\[
|\pi_0(B)|\ge |\pi_0(A)|+h_0-h_1.
\]
\end{lemma}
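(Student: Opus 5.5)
The plan is to attach the handles one at a time and track how the number of components changes at each step; the inequality then follows by summing. Since the component count of $B$ does not depend on the order in which handles are attached (they are attached along disjoint regions of the boundary), I may fix an order in which all $0$-handles come first, followed by the $1$-handles, followed by the handles of index at least $2$. Write $A=A_0\subseteq A_1\subseteq\cdots\subseteq A_m=B$, where each $A_{s+1}$ is obtained from $A_s$ by attaching a single $j$-handle $H=\mathbb D^j\times\mathbb D^{d-j}$ along $\partial\mathbb D^j\times\mathbb D^{d-j}$. It then suffices to prove three local statements.

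\textbf{Case $j=0$.} The attaching region $\partial\mathbb D^0\times\mathbb D^d$ is empty. Hence $A_{s+1}=A_s\sqcup H$, and since $H$ is connected, $|\pi_0(A_{s+1})|=|\pi_0(A_s)|+1$.

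\textbf{Case $j=1$.} The attaching region $\partial\mathbb D^1\times\mathbb D^{d-1}$ consists of two disks, each of which is connected and so lies in a single component of $A_s$. The handle itself is connected. The components of $A_{s+1}$ are therefore the components of $A_s$ not met by the attaching disks, together with one new component containing $H$ and the (at most two) components of $A_s$ it meets. This gives $|\pi_0(A_{s+1})|\ge|\pi_0(A_s)|-1$.

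\textbf{Case $j\ge2$.} Here $\partial\mathbb D^j\times\mathbb D^{d-j}$ is connected, because $S^{j-1}$ is connected for $j\ge2$. The handle $H$ thus meets exactly one component of $A_s$ and merges into it, so $|\pi_0(A_{s+1})|=|\pi_0(A_s)|$.

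To make these counts rigorous, I will use the following fact: if $X=Y\cup H$ with $Y\cap H$ equal to the attaching region $R$, and $Y$, $H$ are closed in $X$, then the path components of $X$ are the classes of $\pi_0(Y)\sqcup\pi_0(H)$ under the equivalence relation generated by the components of $R$. This is a gluing/Mayer--Vietoris-type statement at the level of $\pi_0$. I expect the one technical point to be that the ``smoothing corners'' step in Theorem~\ref{thm:morse-handle} does not change path components; a homeomorphism (indeed a homotopy equivalence) preserves $\pi_0$, so this is harmless.

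Summing the increments over the $m$ steps gives
\[
|\pi_0(B)|\ge|\pi_0(A)|+h_0-h_1,
\]
where handles of index at least $2$ contribute $0$. Finiteness of $|\pi_0(A)|$ guarantees that all the quantities involved are finite integers, so the subtraction makes sense. Note that if the attachment of a $1$-handle is arbitrary, ordering is not even needed: the per-step inequality holds regardless of the order, so the telescoping argument works for any sequence of attachments.
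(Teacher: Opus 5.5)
Your proof is correct and follows essentially the same route as the paper: attach the handles one at a time, note that a $0$-handle adds exactly one component, a $1$-handle (two connected attaching disks) lowers the count by at most one, and a handle of index $j\ge2$ (connected attaching region $S^{j-1}\times\mathbb D^{d-j}$) leaves it unchanged, then sum. Your preliminary reordering of the $0$-handles to the front is unnecessary, and its justification is not valid in general, since a later handle may be attached along the boundary of an earlier one; as you note yourself, though, the per-step inequality holds for any attachment order, so nothing depends on it.
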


\begin{proof}
It is enough to attach the handles one at a time. Suppose that $A'$ is the space already constructed and that $A''$ is obtained from $A'$ by attaching one $j$-handle
\[
H=\mathbb D^j\times\mathbb D^{d-j}
\]
along
\[
E=\partial\mathbb D^j\times\mathbb D^{d-j}.
\]
The handle $H$ is path-connected for every $j$.

If $j=0$, then $E=\varnothing$ by the convention above. Thus the handle is added disjointly to $A'$, and it contributes exactly one new path-connected component. Hence $|\pi_0(A'')|=|\pi_0(A')|+1$ in this case.

If $j=1$, then $E=\partial\mathbb D^1\times\mathbb D^{d-1}$ is the disjoint union of two $(d-1)$-disks. If the images of these two disks lie in the same path-connected component of $A'$, then the connected handle $H$ is attached to that component and no two components of $A'$ are joined; the number of components is unchanged. If the two images lie in distinct path-connected components of $A'$, then a path inside $H$ joins those two components after the attachment. In that case exactly those two components merge, and the number of components decreases by one. Thus a $1$-handle can decrease the number of components by at most one.

If $j\ge2$, then $E=\partial\mathbb D^j\times\mathbb D^{d-j}$ is nonempty and path-connected. Its image is therefore contained in a single path-connected component of $A'$. Since $H$ is path-connected and meets that component along a nonempty set, attaching $H$ neither creates a new component nor joins two old components. Hence the number of components is unchanged for handles of index at least two.

Summing these three alternatives over all attached handles gives
\[
|\pi_0(B)|\ge |\pi_0(A)|+h_0-h_1.
\]
\end{proof}

\noindent\textbf{Remark.} This component-counting argument is standard in Morse theory. For a closely related use, see \citet[Theorem~3.22]{Knudson2015}, where a connected manifold is analyzed by first considering the components created by index-$0$ critical points and then observing that handles of index at least two cannot join distinct components because their attaching spheres are connected. Another example appears in \citet[Theorem~2.37]{Nicolaescu2011}, in the proof of the Heegaard-decomposition theorem.

\begin{proposition}\label{prop:morse-half}
Let $h$ be a Morse Gaussian mixture density on $\R^d$. If $N$ is the total number of critical points of $h$ and $M$ is the number of local maxima of $h$, then
\[
M\le \left\lfloor\frac{N+1}{2}\right\rfloor.
\]
\end{proposition}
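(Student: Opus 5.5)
The plan is to run the Morse handle decomposition of the superlevel set $S_\tau$ from Lemma~\ref{lem:low-superlevel-connected} downward from the top, and to count components with Lemma~\ref{lem:handle-components}. Since the critical set is finite (it is compact by Lemma~\ref{lem:compact-critical}, and every critical point is isolated because $h$ is Morse), there are finitely many critical values, and all of them exceed $\tau$. For $t$ above the maximum of $h$ the superlevel set $\{h\ge t\}$ is empty. We then decrease $t$ through the critical values down to $\tau$.

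By the handle-attachment theorem applied to $-h$ (Theorem~\ref{thm:morse-handle}), each time $t$ crosses a critical value, $\{h\ge t\}$ changes by attaching one handle for each critical point at that level. A critical point of Morse index $\lambda$ for $h$ contributes a $(d-\lambda)$-handle. Local maxima (index $d$) therefore give $0$-handles, and index-$(d-1)$ saddles give $1$-handles. Between critical values the superlevel sets are diffeomorphic. This uses that $h$ is proper on $\{h\ge t\}$ for $t>0$, since these sets are compact because $h\to0$ at infinity.

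We start from the empty set, with $0$ components, and iterate Lemma~\ref{lem:handle-components} over all critical levels. Let $M$ be the number of maxima and $s_1$ the number of index-$(d-1)$ critical points. This gives
\[
1=|\pi_0(S_\tau)|\ge M-s_1,
\]
where connectedness of $S_\tau$ comes from Lemma~\ref{lem:low-superlevel-connected}. Hence $s_1\ge M-1$, and so $N\ge M+s_1\ge 2M-1$. The only overlap to check is the case $d=1$, where index $d-1=0$ means a local minimum rather than a maximum, so the two sets are disjoint whenever $d\ge1$. This yields $M\le (N+1)/2$, and since $M$ is an integer, $M\le\lfloor (N+1)/2\rfloor$.

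The main obstacle is making the handle-theorem step rigorous in the noncompact ambient $\R^d$ with superlevel sets. We must ensure that the region between consecutive levels is a compact manifold with boundary on which $-h$ has no critical points except at the designated level, so that Theorem~\ref{thm:morse-handle} applies verbatim. Properness of $h$ on $\{h\ge\tau\}$ and regularity of the intermediate levels handle this. Several critical points sharing a value is harmless, since the theorem attaches all their handles at once. The topmost step is the case where the lower set is empty and only $0$-handles appear; it is covered by the convention $\partial\mathbb D^0=\varnothing$.
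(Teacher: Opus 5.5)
Your proposal is correct and follows essentially the same route as the paper. You run the superlevel sets $\{h\ge t\}$ downward from the empty set to the connected set $S_\tau$ of Lemma~\ref{lem:low-superlevel-connected}, read off $0$-handles from maxima and $1$-handles from index-$(d-1)$ points via Theorem~\ref{thm:morse-handle} applied to $-h$, and use Lemma~\ref{lem:handle-components} to get at least $M-1$ index-$(d-1)$ critical points, hence $N\ge 2M-1$. Your aside about $d=1$ is harmless but unnecessary, since indices $d$ and $d-1$ always differ.
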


\begin{proof}
Choose $\tau$ as in Lemma~\ref{lem:low-superlevel-connected}. For $t>0$, write
\[
S_t=\{x:h(x)\ge t\}.
\]
For $t$ larger than $\max h$, the set $S_t$ is empty. Since $S_\tau$ is compact and $h$ is Morse, the critical points in $S_\tau$ are finite, and hence there are only finitely many critical values in $[\tau,\max h]$. Choose regular values separating these critical values. On each band between two adjacent regular values the set
\[
\{x:a\le -h(x)\le b\}
\]
is compact because it is contained in $S_\tau$. Thus Theorem~\ref{thm:morse-handle} applies to $-h$ on each such band, and the number of path-connected components of $S_t$ can change only when $t$ crosses a critical value.

By the superlevel-set form of Theorem~\ref{thm:morse-handle}, if $p$ is a critical point of Morse index $\lambda$ for $h$, then crossing its critical value downward attaches a handle of index $d-\lambda$ to the current superlevel set. Thus a local maximum of $h$ has Morse index $d$ and contributes a $0$-handle, while a critical point of Morse index $d-1$ contributes a $1$-handle. All remaining critical points contribute handles of index at least two.

Let $C_{d-1}$ denote the number of critical points of Morse index $d-1$. Starting from the empty superlevel set and crossing all critical values down to $\tau$, Lemma~\ref{lem:handle-components} gives
\[
|\pi_0(S_\tau)|\ge M-C_{d-1}.
\]
Lemma~\ref{lem:low-superlevel-connected} gives $|\pi_0(S_\tau)|=1$, so
\[
C_{d-1}\ge M-1.
\]
Since every local maximum and every index-$(d-1)$ critical point is included among the $N$ critical points,
\[
N\ge M+C_{d-1}\ge 2M-1.
\]
Rearranging gives the claimed inequality.
\end{proof}

\begin{lemma}[Exponential tilting]\label{lem:generic-tilt}
Let $\Phi$ be a Gaussian mixture density. Suppose that $K_1,\dots,K_M$ are pairwise disjoint compact neighborhoods with $p_i\in\mathrm{int}(K_i)$ and
\[
\Phi(p_i)>\max_{x\in\partial K_i}\Phi(x),
\qquad i=1,\dots,M.
\]
For every neighborhood of $0$ in $\R^d$, there exists $c$ in that neighborhood such that
\[
H_c(x)=e^{c\cdot x}\Phi(x)
\]
is a positive scalar multiple of a Gaussian mixture density with the same number of components, has only nondegenerate critical points, and has at least one local maximum in each $K_i$. If $\Phi$ is homoscedastic, then the tilted mixture is homoscedastic with the same common covariance and the same affine rank of the component means.
\end{lemma}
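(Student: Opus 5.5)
First we check that the tilt keeps us inside the model. Complete the square in each exponent:
\[
c\cdot x-\tfrac12(x-\mu_i)^\top\Sigma_i^{-1}(x-\mu_i)
=-\tfrac12(x-\mu_i-\Sigma_i c)^\top\Sigma_i^{-1}(x-\mu_i-\Sigma_i c)+c^\top\mu_i+\tfrac12 c^\top\Sigma_i c .
\]
Hence
\[
H_c=\sum_i\alpha_i e^{c^\top\mu_i+\frac12c^\top\Sigma_ic}\,\phi(\,\cdot\,;\mu_i+\Sigma_ic,\Sigma_i).
\]
After dividing by the total weight, this is a positive multiple of a $k$-component Gaussian mixture with covariances $\Sigma_i$, weights $\alpha_i'>0$ and means $\mu_i+\Sigma_ic$. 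In the homoscedastic case every $\Sigma_i=\Sigma$, so all means shift by the same vector $\Sigma c$. That shift preserves the common covariance and the affine hull dimension of the means.

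Next we show that $H_c$ is Morse for almost every $c$. Work with $\log H_c=c\cdot x+\log\Phi$, which is defined on all of $\R^d$ because $\Phi>0$. Its critical points solve $\nabla\log\Phi(x)=-c$. Where $\nabla\log H_c=0$ we have $\nabla H_c=0$, and at such points $D^2H_c=H_c\,D^2\log H_c=H_c\,D^2\log\Phi$. So $H_c$ is Morse exactly when $-c$ is a regular value of the smooth map $F=\nabla\log\Phi:\R^d\to\R^d$, whose Jacobian is $D^2\log\Phi$. By Sard's theorem the critical values of $F$ have measure zero. Therefore the set of good $c$ has full measure, and it meets every neighborhood of $0$.

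Third, the local maxima survive. Set $\delta_i=\Phi(p_i)-\max_{\partial K_i}\Phi>0$, and let $R$ bound $\|x\|$ over the compact set $\bigcup_iK_i$. For $\|c\|$ small, $e^{c\cdot x}$ is uniformly close to $1$ on $\bigcup K_i$. This gives $|H_c-\Phi|<\min_i\delta_i/2$ there, hence $H_c(p_i)>\max_{\partial K_i}H_c$. The maximum of the continuous function $H_c$ on the compact set $K_i$ is therefore not attained on $\partial K_i$. It lies in $\mathrm{int}(K_i)$ and is a local maximum of $H_c$ on $\R^d$.

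Finally, choose the neighborhood small enough that the estimate in the previous step holds, and pick a Morse-good $c$ inside it using the full-measure set from the second step. Both properties hold simultaneously for this $c$. The main obstacle is the Morse genericity in the second step. The key is to reduce it to Sard's theorem through $\log\Phi$ and to check that nondegeneracy for $H_c$ and for $\log H_c$ coincide at critical points. The other steps are routine continuity and algebra.
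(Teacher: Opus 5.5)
Your proposal is correct and follows essentially the same route as the paper. Both arguments complete the square to identify the tilted components, use uniform closeness of $e^{c\cdot x}$ to $1$ on $\bigcup_i K_i$ to keep the strict boundary inequalities, and apply Sard's theorem to the logarithmic gradient map, using the fact that at a critical point $D^2H_c$ is a positive multiple of $D^2\log\Phi$. The paper writes this map as $F=-\nabla\log\Phi$ and solves $F(x)=c$ instead of $\nabla\log\Phi(x)=-c$.
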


\begin{proof}
See Appendix~\ref{subsec:generic-tilt-proof}.
\end{proof}

\begin{remark}\label{rem:generic-tilt-consequences}
The normalized density proportional to $H_c$ has the same critical points and local maxima as $H_c$. Since its critical set is compact by Lemma~\ref{lem:compact-critical} and all critical points are nondegenerate, this critical set is finite. Hence the normalized tilted mixture has finite modal set and at least $M$ nondegenerate modes. If $\Phi$ has finite modal set, then the neighborhoods $K_i$ may be chosen around all points of $\Mode(\Phi)$, and the normalized tilted mixture has at least $|\Mode(\Phi)|$ nondegenerate modes.
\end{remark}

\begin{proposition}[Finite-mode transfer]\label{prop:finite-mode-transfer}
Let $\mathcal C$ be a class of Gaussian mixture densities such that, for every $\Phi\in\mathcal C$ and every sufficiently small $c\in\R^d$, the function $e^{c\cdot x}\Phi(x)$ is a positive scalar multiple of a density in $\mathcal C$. Suppose that every mixture in $\mathcal C$ has at most $U$ nondegenerate critical points. If $\Phi\in\mathcal C$ has finite modal set, then
\[
|\Mode(\Phi)|\le \left\lfloor\frac{U+1}{2}\right\rfloor.
\]
\end{proposition}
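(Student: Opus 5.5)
The plan is to combine the exponential tilting of Lemma~\ref{lem:generic-tilt} with the Morse counting of Proposition~\ref{prop:morse-half}. Let $\Phi\in\mathcal C$ have finite modal set $\Mode(\Phi)=\{p_1,\dots,p_M\}$. If $M=0$ there is nothing to prove, so assume $M\ge1$.

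\textbf{Step 1: strict local maxima.} First I would show that each $p_i$ is a strict local maximum. Choose $r>0$ so that the balls $\overline{B}(p_i,r)$ are pairwise disjoint and $\Phi\le\Phi(p_i)$ on $\overline{B}(p_i,r)$. If some $y\ne p_i$ in this ball had $\Phi(y)=\Phi(p_i)$, then $y$ would be a local maximum, hence would lie in $\Mode(\Phi)$. Shrinking $r$ below the minimal pairwise distance between modes rules this out. Hence $\Phi<\Phi(p_i)$ on $\overline{B}(p_i,r)\setminus\{p_i\}$. Taking $K_i=\overline{B}(p_i,r)$, compactness of $\partial K_i$ gives $\Phi(p_i)>\max_{\partial K_i}\Phi$. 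These are exactly the hypotheses of Lemma~\ref{lem:generic-tilt}.

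\textbf{Step 2: tilt within the class.} By the hypothesis on $\mathcal C$, there is a neighborhood $V$ of $0$ such that for $c\in V$ the function $e^{c\cdot x}\Phi$ is a positive multiple of some density $\Phi_c\in\mathcal C$. Apply Lemma~\ref{lem:generic-tilt} with this neighborhood $V$. This yields $c\in V$ for which $H_c$ has only nondegenerate critical points and at least one local maximum in each $K_i$.

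By Remark~\ref{rem:generic-tilt-consequences}, the density $\Phi_c$ has the same critical points, Hessian nondegeneracy and local maxima as $H_c$. One point needs care: at a critical point $x$, $D^2(\lambda H)=\lambda D^2H$, so nondegeneracy is preserved under positive scaling. Therefore $\Phi_c$ is a Morse Gaussian mixture. Its critical set is finite, since it is compact by Lemma~\ref{lem:compact-critical} and consists of isolated points. Since the $K_i$ are disjoint, $\Phi_c$ has at least $M$ local maxima.

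\textbf{Step 3: count.} Let $N$ be the number of critical points of $\Phi_c$. Since $\Phi_c\in\mathcal C$ and all its critical points are nondegenerate, $N\le U$. Proposition~\ref{prop:morse-half} then gives
\[
M\le \#\{\text{local maxima of }\Phi_c\}\le\left\lfloor\frac{N+1}{2}\right\rfloor\le\left\lfloor\frac{U+1}{2}\right\rfloor.
\]

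\textbf{Main obstacle.} The substantive work is in Lemmas~\ref{lem:generic-tilt} and~\ref{lem:low-superlevel-connected}, which are already given. What remains delicate here is Step~1: a mode of $\Phi$ need not be nondegenerate or a priori strict. It is the finiteness of $\Mode(\Phi)$ that forces strictness, and this is what makes the boundary inequality of Lemma~\ref{lem:generic-tilt} available.
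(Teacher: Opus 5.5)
Your proposal is correct and follows the paper's proof step for step. You establish strictness of the finitely many modes, take a small generic tilt inside the neighborhood where $e^{c\cdot x}\Phi$ stays proportional to a member of $\mathcal C$ (so it has only nondegenerate critical points and a local maximum in each $K_i$), and then apply Proposition~\ref{prop:morse-half} with $N\le U$. One small fix in Step~1: take $r$ strictly smaller than the radius from the definition of $\Mode(\Phi)$, since otherwise a point $y\in\partial\overline B(p_i,r)$ with $\Phi(y)=\Phi(p_i)$ is not yet known to be a local maximum.
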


\begin{proof}
Since $\Mode(\Phi)$ is finite, each of its points is an isolated strict local maximum. Choose pairwise disjoint compact neighborhoods of these modes on which the strict boundary inequalities in Lemma~\ref{lem:generic-tilt} hold, and choose the corresponding exponential tilt $H_c$. Let $\widehat H_c$ be the density in $\mathcal C$ proportional to $H_c$. The functions $H_c$ and $\widehat H_c$ have the same critical points and local maxima, and all these critical points are nondegenerate. Hence the number $N_c$ of critical points of $\widehat H_c$ is at most $U$. By Proposition~\ref{prop:morse-half}, the number of local maxima of $\widehat H_c$ is at most $\lfloor(N_c+1)/2\rfloor$, and this is at most $\lfloor(U+1)/2\rfloor$. Since Lemma~\ref{lem:generic-tilt} gives at least $|\Mode(\Phi)|$ local maxima of $H_c$, and hence of $\widehat H_c$, the result follows.
\end{proof}

Combining Theorem~\ref{thm:main-general} with Proposition~\ref{prop:finite-mode-transfer} gives the direct finite-mode consequence
\[
|\Mode(\Phi)|\le \left\lfloor\frac{U_{\mathrm{het}}(d,k)+1}{2}\right\rfloor
\]
whenever $\Mode(\Phi)$ is finite. We will combine this argument with sharper bounds below.

\subsection[An augmented reduction approach]{An augmented reduction approach}\label{subsec:reduced-bound}

The normalization by one reference component can be pushed further: the variable $x$ can be eliminated. The key point is to keep the reciprocal of a determinant as an ordinary algebraic variable rather than as an additional Pfaff-chain element.

Fix again the $k$th component as reference and write $m=k-1$ and $A_i=\Sigma_i^{-1}$. For $y=(y_1,\dots,y_m)\in (0,\infty)^m$, define
\[
M(y)=A_k+\sum_{i=1}^m y_iA_i,
\qquad
\nu(y)=A_k\mu_k+\sum_{i=1}^m y_iA_i\mu_i,
\qquad
X(y)=M(y)^{-1}\nu(y).
\]
For $i=1,\dots,m$, set
\[
\beta_i=\frac{\alpha_i\det(\Sigma_k)^{1/2}}{\alpha_k\det(\Sigma_i)^{1/2}}
\]
and
\[
q_i(x)=-\frac12(x-\mu_i)^\top A_i(x-\mu_i)+\frac12(x-\mu_k)^\top A_k(x-\mu_k).
\]
Then
\[
\rho_i(x)=\frac{\alpha_i\phi_i(x)}{\alpha_k\phi_k(x)}=\beta_i e^{q_i(x)}.
\]
The usual reduced map $R=(R_1,\dots,R_m):(0,\infty)^m\to \R^m$ is
\[
R_i(y)=y_i-\beta_i e^{q_i(X(y))},\qquad i=1,\dots,m.
\]

\begin{theorem}[Positive-orthant reduction]\label{thm:reduction}
The map $X:(0,\infty)^m\to\R^d$ restricts to a bijection between the positive solution set of the reduced system
\[
R_1(y)=\cdots=R_m(y)=0
\]
and the critical set $\Crit(\Phi)$ of the Gaussian mixture density $\Phi$. Explicitly, if $x\in\Crit(\Phi)$, then the corresponding reduced coordinates are uniquely given by
\[
y_i=\rho_i(x)=\frac{\alpha_i\phi_i(x)}{\alpha_k\phi_k(x)},\qquad i=1,\dots,m,
\]
and conversely every positive root $y$ of $R$ gives the critical point $x=X(y)$.
\end{theorem}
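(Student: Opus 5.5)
We argue directly from the normalized critical system \eqref{eq:normalized-system}, using that for positive $y$ the matrix $M(y)$ is invertible. Indeed $M(y)=A_k+\sum_{i=1}^m y_iA_i$ is a positive combination of positive definite matrices, hence positive definite, so $X(y)=M(y)^{-1}\nu(y)$ is well defined and smooth on $(0,\infty)^m$. The key identity is that the system \eqref{eq:normalized-system}, for arbitrary positive weights $y$ in place of $\rho(x)$, is linear in $x$:
\[
b_k(x)+\sum_{i=1}^m y_i b_i(x)=-M(y)x+\nu(y).
\]
Thus $b_k(x)+\sum_i y_ib_i(x)=0$ holds if and only if $x=X(y)$.

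\textbf{Forward direction.} Let $x\in\Crit(\Phi)$ and set $y_i=\rho_i(x)>0$. By \eqref{eq:normalized-system}, $-M(y)x+\nu(y)=0$, so $x=X(y)$. Then
\[
y_i=\rho_i(x)=\beta_ie^{q_i(x)}=\beta_ie^{q_i(X(y))},
\]
so $R(y)=0$ and $y$ is a positive root with $X(y)=x$. Here we must check that the displayed $\beta_i$ and $q_i$ really reproduce $\alpha_i\phi_i/(\alpha_k\phi_k)$. This is a routine computation: the normalizing constants contribute the factor $\det(\Sigma_k)^{1/2}/\det(\Sigma_i)^{1/2}$, and the exponents subtract.

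\textbf{Converse.} Let $y\in(0,\infty)^m$ satisfy $R(y)=0$, and put $x=X(y)$. Then $y_i=\beta_ie^{q_i(x)}=\rho_i(x)$ for each $i$. Moreover $M(y)x=\nu(y)$ gives $b_k(x)+\sum_i\rho_i(x)b_i(x)=0$. Multiplying by $\alpha_k\phi_k(x)>0$ shows $\nabla\Phi(x)=0$, so $x\in\Crit(\Phi)$.

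\textbf{Bijectivity.} Surjectivity onto $\Crit(\Phi)$ is the forward direction. For injectivity, suppose $y,y'$ are positive roots with $X(y)=X(y')=x$. The converse computation gives $y_i=\rho_i(x)=y'_i$ for every $i$, so $y=y'$. This also establishes the uniqueness claim: any positive root mapping to $x$ must equal $\rho(x)$.

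There is no serious obstacle in this argument. The only points needing care are the positivity of $M(y)$, which is what restricts the domain to the positive orthant, and the constant bookkeeping in $\beta_i$ and $q_i$.
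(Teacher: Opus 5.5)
Your proposal is correct and follows essentially the same route as the paper: for fixed positive weights the normalized system is linear in $x$, $M(y)$ is invertible on the positive orthant, and $R(y)=0$ forces $y=\rho(X(y))$. Your explicit injectivity and surjectivity bookkeeping simply restates the paper's observation that the two constructions $x\mapsto\rho(x)$ and $y\mapsto X(y)$ are mutually inverse.
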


\begin{proof}
Suppose first that $x\in\Crit(\Phi)$ and define $y_i=\rho_i(x)$ for $i=1,\dots,m$. Then $y_i>0$. The normalized critical equations from \eqref{eq:normalized-system} read
\[
A_k(\mu_k-x)+\sum_{i=1}^m y_iA_i(\mu_i-x)=0.
\]
Rearranging gives
\[
\left(A_k+\sum_{i=1}^m y_iA_i\right)x=A_k\mu_k+\sum_{i=1}^m y_iA_i\mu_i,
\]
so $x=X(y)$. Since $y_i=\rho_i(x)=\beta_i e^{q_i(x)}$ and $x=X(y)$, we obtain $R_i(y)=0$ for all $i$.

Conversely, let $y\in(0,\infty)^m$ satisfy $R(y)=0$ and put $x=X(y)$. By definition of $X$,
\[
A_k(\mu_k-x)+\sum_{i=1}^m y_iA_i(\mu_i-x)=0.
\]
Since $R_i(y)=0$, we also have $y_i=\beta_i e^{q_i(x)}=\rho_i(x)$ for each $i$. Multiplying the last vector equation by $\alpha_k\phi_k(x)>0$ gives
\[
\sum_{i=1}^k \alpha_i\phi_i(x)A_i(\mu_i-x)=\nabla\Phi(x)=0.
\]
Hence $x\in\Crit(\Phi)$.

The two constructions are inverse to one another. Starting from $x\in\Crit(\Phi)$, forming $y_i=\rho_i(x)$ and then solving $M(y)x=\nu(y)$ recovers the original point $x$ because $M(y)$ is positive definite. Starting from a positive root $y$ of $R$, forming $x=X(y)$, and then evaluating the ratios gives $\rho_i(x)=y_i$ because $R_i(y)=0$.
\end{proof}

\begin{proposition}\label{prop:reduced-jacobian-hessian}
Let $y\in(0,\infty)^m$ satisfy $R(y)=0$, and set $x=X(y)$. Then $DR(y)$ is nonsingular if and only if $D^2\Phi(x)$ is nonsingular.
\end{proposition}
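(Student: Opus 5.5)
We will compare $DR(y)$ with $DG(x)$, where $G(x)=b_k(x)+\sum_{i=1}^m\rho_i(x)b_i(x)$ is the normalized gradient from the proof of Theorem~\ref{thm:main-general}. That proof already shows $DG(x)=(\alpha_k\phi_k(x))^{-1}D^2\Phi(x)$ at critical points, so it suffices to show that $DR(y)$ is nonsingular if and only if $DG(x)$ is. The natural tool is a block-matrix (Schur complement) comparison of the two linearizations of the same coupled system in the variables $(x,y)$.

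Consider the map $F:\R^d\times(0,\infty)^m\to\R^d\times\R^m$ given by
\[
F(x,y)=\Bigl(M(y)x-\nu(y),\; \bigl(y_i-\beta_i e^{q_i(x)}\bigr)_{i=1}^m\Bigr).
\]
Its zeros are exactly the pairs $(x,y)$ with $x\in\Crit(\Phi)$ and $y=\rho(x)$, by Theorem~\ref{thm:reduction}. Its Jacobian has the block form
\[
DF=\begin{pmatrix} M(y) & C(x,y)\\ -B(x,y) & I_m\end{pmatrix},
\]
where $C$ has columns $A_i(x-\mu_i)$ and $B$ has rows $\beta_i e^{q_i(x)}\nabla q_i(x)^\top$. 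The $(1,1)$ block $M(y)$ is positive definite, and the $(2,2)$ block is the identity. We can therefore take the Schur complement in two ways:
\[
\det DF=\det M(y)\,\det\bigl(I_m+BM^{-1}C\bigr)=\det\bigl(M+CB\bigr).
\]

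Next we identify both complements. Differentiating $M(y)X(y)=\nu(y)$ gives $DX=-M^{-1}C$, so by the chain rule $DR(y)=I_m-B\,DX=I_m+BM^{-1}C$ at the root. On the other hand, at $y=\rho(x)$ we have $G(x)=-(M(y)x-\nu(y))$, and differentiating gives
\[
DG(x)=-M(\rho(x))-\sum_i (x-\mu_i)^\top\text{-terms}=-(M+CB),
\]
using $\nabla\rho_i=\rho_i\nabla q_i$. Hence
\[
\det DR(y)=\det\bigl(M+CB\bigr)/\det M=\pm\det DG(x)/\det M,
\]
and since $\det M>0$, nonsingularity of $DR(y)$ and of $DG(x)$ (equivalently, of $D^2\Phi(x)$) coincide.

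The main obstacle is the bookkeeping of signs and of which factor multiplies which: one must check that $DG=-(M+CB)$ exactly, including the term $\sum\rho_i A_i(\mu_i-x)\nabla q_i^\top$ and the sign conventions in $b_i$. This can be done by direct differentiation. One must also confirm that evaluating $DR$ only at roots is harmless; in fact the formula $DR=I+BM^{-1}C$ holds everywhere, and the identification $e^{q_i}\beta_i=y_i$ is used only in matching the $B$ block.
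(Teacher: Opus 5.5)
Your proof is correct and is essentially the paper's argument: the paper writes $DR(y)=I_m-D\rho(x)\,DX(y)$ and $DG(x)=-M(y)\bigl(I_d-DX(y)\,D\rho(x)\bigr)$ and invokes Sylvester's identity $\det(I_m-CJ)=\det(I_d-JC)$, which is exactly what your two Schur complements of the block Jacobian of $F$ prove (your $B$ is $D\rho(x)$ and your $-M(y)^{-1}C$ is $DX(y)$). One small correction to your last remark: $B=D\rho(x)$ holds identically, so the root condition $y=\rho(x)$ is actually used to identify $M(y)$ in $DR(y)$ with $M(\rho(x))$ in $DG(x)$, while $x=X(y)$ is what evaluates $B$ and $C$ at the same point.
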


\begin{proof}
Define
\[
B(y,x)=A_k(\mu_k-x)+\sum_{i=1}^m y_iA_i(\mu_i-x).
\]
Then $B(y,X(y))=0$ for every $y$, because this identity is exactly $M(y)X(y)=\nu(y)$. At a root of the reduced system one has $y_i=\rho_i(x)$, and the normalized critical vector is
\[
G(x)=A_k(\mu_k-x)+\sum_{i=1}^m\rho_i(x)A_i(\mu_i-x)=B(\rho(x),x).
\]
Thus $G(x)=0$ if and only if $x\in\Crit(\Phi)$, and at such a point
\[
DG(x)=\frac{1}{\alpha_k\phi_k(x)}D^2\Phi(x),
\]
as in the proof of Theorem~\ref{thm:main-general}. Hence $DG(x)$ is nonsingular if and only if $D^2\Phi(x)$ is nonsingular.

It remains to compare $DG(x)$ with $DR(y)$. Put
\[
C=D\rho(x)\in\R^{m\times d},
\qquad
J=DX(y)\in\R^{d\times m},
\qquad
P=D_yB(y,x)\in\R^{d\times m}.
\]
Since $D_xB(y,x)=-M(y)$, differentiating $B(y,X(y))=0$ gives
\[
P-M(y)J=0,
\qquad\text{so}\qquad P=M(y)J.
\]
Also,
\[
DR(y)=I_m-CJ,
\]
while
\[
DG(x)=D_xB(y,x)+D_yB(y,x)D\rho(x)=-M(y)+PC
       =-M(y)(I_d-JC).
\]
The matrix $M(y)$ is positive definite and hence invertible. Sylvester's determinant identity gives
\[
\det(I_m-CJ)=\det(I_d-JC).
\]
Therefore $DR(y)$ is nonsingular if and only if $DG(x)$ is nonsingular, and the preceding paragraph identifies this with nonsingularity of $D^2\Phi(x)$.
\end{proof}

Set
\[
D(y)=\det M(y),
\qquad
N(y)=\adj(M(y))\nu(y).
\]
Each entry of $M(y)$ is affine in $y$, so $D(y)$ has degree at most $d$. Each coordinate of $N(y)$ has degree at most $d$, because $\adj(M(y))$ has entries of degree at most $d-1$ and $\nu(y)$ is affine. Moreover $D(y)>0$ on $(0,\infty)^m$, and
\[
X(y)=\frac{N(y)}{D(y)}.
\]
We now augment the reduced $y$-system by adding one ordinary algebraic reciprocal variable $z$, constrained by $zD(y)=1$. Define
\[
\eta_i(y,z)=\exp\{q_i(zN(y))\},
\qquad i=1,\dots,m.
\]
Consider the square system in the $k$ variables $(y,z)\in(0,\infty)^m\times\R$:
\begin{equation}\label{eq:aug-system}
E_0(y,z)=zD(y)-1=0,
\qquad
E_i(y,z)=y_i-\beta_i\eta_i(y,z)=0,\quad i=1,\dots,m.
\end{equation}

\begin{lemma}[Bijection and nondegeneracy of the augmented system]\label{lem:aug-bijection}
Positive roots of the reduced system $R(y)=0$ are in bijection with roots of \eqref{eq:aug-system} in $(0,\infty)^m\times\R$, via
\[
y\longmapsto (y,D(y)^{-1}).
\]
Under this bijection, a root of \eqref{eq:aug-system} is nondegenerate if and only if the corresponding point of $\Crit(\Phi)$ is nondegenerate.
\end{lemma}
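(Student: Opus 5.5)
The plan has two parts: first the bijection, then a Jacobian comparison. Throughout we use that $D(y)>0$ on $(0,\infty)^m$ and that $X(y)=N(y)/D(y)$.

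\emph{Bijection.} Let $(y,z)\in(0,\infty)^m\times\R$ be a root of \eqref{eq:aug-system}. Then $E_0=0$ forces $z=D(y)^{-1}$, so $zN(y)=X(y)$. Hence $\eta_i(y,z)=e^{q_i(X(y))}$ and $E_i(y,z)=R_i(y)$, which shows that $y$ is a positive root of $R$. Conversely, if $R(y)=0$ with $y$ positive, then $(y,D(y)^{-1})$ solves the system by the same identities. The two maps $y\mapsto(y,D(y)^{-1})$ and $(y,z)\mapsto y$ are mutually inverse on these root sets, because $z$ is uniquely determined by $y$.

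\emph{Nondegeneracy.} Order the variables as $(z,y)$ and the equations as $(E_0,E_1,\dots,E_m)$, and compute the $k\times k$ Jacobian at a root.

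\begin{enumerate}[1.]
\item The first row is $\partial E_0/\partial z=D(y)$ together with $\partial E_0/\partial y=z\nabla D(y)$.
\item Define $F(y,z)=(E_1,\dots,E_m)$. By construction $F(y,D(y)^{-1})=R(y)$ identically on $(0,\infty)^m$.
\item Differentiating this identity in $y$ by the chain rule gives
\[
DR(y)=D_yF+D_zF\cdot\nabla(D^{-1})(y).
\]
\end{enumerate}

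Instead of using the formula in step 3 directly, we perform a column operation on the full Jacobian. For each $j$, add to the $y_j$-column the $z$-column multiplied by $\partial_{y_j}(D^{-1})=-D^{-2}\partial_{y_j}D$. This operation does not change the determinant.

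\begin{enumerate}[1.]
\item In the first row, the $y_j$ entry becomes $z\,\partial_{y_j}D-D\cdot D^{-2}\partial_{y_j}D$. Since $z=D^{-1}$ at a root, this entry is $0$.
\item In the lower rows, the $y$-block becomes $D_yF+D_zF\,\nabla(D^{-1})=DR(y)$.
\end{enumerate}

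After the operation, the first row is $(D(y),0,\dots,0)$. Expanding the determinant along this row gives
\[
\det D(E_0,E_1,\dots,E_m)(z,y)=D(y)\,\det DR(y).
\]
Because $D(y)>0$, the root of \eqref{eq:aug-system} is nondegenerate exactly when $DR(y)$ is nonsingular. By Proposition~\ref{prop:reduced-jacobian-hessian}, this holds exactly when $D^2\Phi(X(y))$ is nonsingular. Finally, Theorem~\ref{thm:reduction} identifies $X(y)$ with the corresponding critical point.

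\emph{Main obstacle.} The only real difficulty is bookkeeping. The identity in step 3 holds only along the graph $z=D(y)^{-1}$, whereas the Jacobian is taken in the ambient $(z,y)$ space. The column operation above is the cleanest way to reconcile the two, and it relies on $z=D^{-1}$ only at the root itself, where the first-row entries cancel.
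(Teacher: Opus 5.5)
Your proposal is correct and takes essentially the same route as the paper: $E_0=0$ forces $z=D(y)^{-1}$, which gives the bijection, and your column operation is exactly the Schur-complement elimination of the $E_0$-row and $z$-column that the paper uses, yielding $\det DE=D(y)\det DR(y)$ (up to column ordering) before invoking Proposition~\ref{prop:reduced-jacobian-hessian}. Your explicit check that the first-row entries $z\,\partial_{y_j}D-D^{-1}\partial_{y_j}D$ vanish at a root spells out the step that the paper compresses into one line.
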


\begin{proof}
If $y$ solves $R(y)=0$, then $(y,D(y)^{-1})$ solves \eqref{eq:aug-system}, because $D(y)^{-1}N(y)=X(y)$. Conversely, $E_0(y,z)=0$ implies $z=D(y)^{-1}$, and then the equations $E_i=0$ reduce exactly to $R_i(y)=0$.

For nondegeneracy, write $h(y,z)=(E_1(y,z),\dots,E_m(y,z))$. Since
\[
\frac{\partial E_0}{\partial z}(y,z)=D(y)>0
\]
at every root, the Schur complement of the $E_0$-row and $z$-column shows that the Jacobian $DE(y,z)$ is nonsingular if and only if
\[
D_y h(y,s(y))
\]
is nonsingular, where $s(y)=D(y)^{-1}$. But $h(y,s(y))=R(y)$, so this Schur complement is $DR(y)$. Proposition~\ref{prop:reduced-jacobian-hessian} then gives the equivalence with nonsingularity of $D^2\Phi(X(y))$.
\end{proof}

\begin{lemma}\label{lem:aug-pfaff}
The functions $\eta_1,\dots,\eta_m$ form a Pfaff chain of length $m=k-1$ and degree at most $2d+2$ on $(0,\infty)^m\times\R$. With respect to this chain, the equations \eqref{eq:aug-system} have polynomial degrees
\[
p_0=d+1,
\qquad
p_1=\cdots=p_m=1.
\]
\end{lemma}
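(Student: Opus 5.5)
The plan is to verify directly the definition of a Pfaff chain from Section~\ref{subsec:normalized-system}, now on the open set $U=(0,\infty)^m\times\R$ with coordinates $(y,z)$. I will compute every partial derivative of every $\eta_i$ and check that it is a polynomial in $(y,z,\eta_1,\dots,\eta_i)$ of total degree at most $2d+2$. After that, the degree count for the equations \eqref{eq:aug-system} is immediate.

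First I record the degrees of the ingredients. By the discussion preceding the lemma, each coordinate of $N(y)=\adj(M(y))\nu(y)$ is a polynomial in $y$ of degree at most $d$. Hence, writing $x(y,z)=zN(y)$, each coordinate of $x$ is a polynomial in $(y,z)$ of degree at most $d+1$. Moreover $\partial x/\partial z=N(y)$ has degree at most $d$, and $\partial x/\partial y_j=z\,\partial N/\partial y_j$ has degree at most $1+(d-1)=d$. The function $q_i$ is quadratic in $x$, so $\nabla q_i(x)$ is affine in $x$. Therefore $\nabla q_i(x(y,z))$ has coordinates of degree at most $d+1$ in $(y,z)$. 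All of these maps are polynomial, so each $\eta_i=\exp\{q_i(zN(y))\}$ is smooth (indeed real analytic) on $U$.

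Next I apply the chain rule. For any coordinate $v\in\{y_1,\dots,y_m,z\}$,
\[
\frac{\partial\eta_i}{\partial v}(y,z)=\eta_i(y,z)\,\nabla q_i\bigl(x(y,z)\bigr)^{\top}\frac{\partial x}{\partial v}(y,z).
\]
The inner product is a polynomial in $(y,z)$ of degree at most $(d+1)+d=2d+1$. Multiplying by the chain variable $\eta_i$ gives a polynomial $P_{iv}(y,z,\eta_i)$ of total degree at most $2d+2$. This polynomial depends only on $\eta_i$ among the chain functions, so the triangularity requirement $P_{iv}=P_{iv}(y,z,\eta_1,\dots,\eta_i)$ holds trivially. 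Hence $\eta_1,\dots,\eta_m$ form a Pfaff chain of length $m=k-1$ and degree at most $2d+2$.

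For the equation degrees, $E_0=zD(y)-1$ involves no chain functions. Since $\deg D\le d$, it is a polynomial of degree at most $d+1$ in $(y,z)$. Each $E_i=y_i-\beta_i\eta_i$ is the polynomial $y_i-\beta_i w_i$ of degree $1$ evaluated at $w=\eta(y,z)$. This gives $p_0=d+1$ and $p_1=\cdots=p_m=1$.

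The only step requiring care is the degree bookkeeping for $\partial x/\partial y_j$ and $\nabla q_i(x)$. In particular, one must confirm that the adjugate entries have degree at most $d-1$. This holds because they are $(d-1)\times(d-1)$ minors of a matrix whose entries are affine in $y$. Consequently differentiating $N$ lowers its degree to at most $d-1$, and the extra factor $z$ then brings the degree of $\partial x/\partial y_j$ back to $d$. I expect no other obstacle, since the argument uses only the chain rule and these degree counts.
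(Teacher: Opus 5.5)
Your proposal is correct and follows essentially the same route as the paper. The paper first bounds the degree of the composite exponent $q_i(zN(y))$ by $2d+2$ and then differentiates it, whereas you bound $\nabla q_i(x)^{\top}\partial x/\partial v$ by $2d+1$ directly via the chain rule; in both cases multiplying by $\eta_i$ gives chain degree at most $2d+2$, and the equation degrees $p_0=d+1$, $p_1=\cdots=p_m=1$ are read off immediately.
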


\begin{proof}
The polynomial $q_i$ has degree at most $2$, and every coordinate of $zN(y)$ has total degree at most $d+1$ in $(y,z)$. Hence
\[
P_i(y,z)=q_i(zN(y))
\]
has total degree at most $2d+2$. Since $\eta_i=e^{P_i}$, for any coordinate variable $w\in\{y_1,\dots,y_m,z\}$,
\[
\frac{\partial \eta_i}{\partial w}(y,z)
=\eta_i(y,z)\frac{\partial P_i}{\partial w}(y,z).
\]
The derivative $\partial P_i/\partial w$ has degree at most $2d+1$. Therefore $\partial\eta_i/\partial w$ is a polynomial in $(y,z,\eta_i)$ of total degree at most $2d+2$. The derivative of $\eta_i$ does not depend on any later chain element, so ordering the chain as $\eta_1,\dots,\eta_m$ proves the Pfaff-chain assertion.

The first equation $zD(y)-1=0$ has degree at most $d+1$, while each equation $y_i-\beta_i\eta_i=0$ has degree $1$ in $(y,z,\eta)$.
\end{proof}

\begin{theorem}[Augmented reduced Pfaffian bound]\label{thm:augmented-bound}
For every Gaussian mixture with $k\ge 2$ components in $\R^d$,
\[
|\Crit_{\mathrm{nd}}(\Phi)|\le U_{\mathrm{aug}}(d,k)
=2^{\binom{k-1}{2}}(d+1)\bigl((2k-1)d+2k-1\bigr)^{k-1}.
\]
\end{theorem}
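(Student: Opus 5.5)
We apply Khovanskii's bound (Theorem~\ref{thm:Khov}) to the augmented square system \eqref{eq:aug-system}, using the Pfaff-chain data recorded in Lemma~\ref{lem:aug-pfaff}. There are $n=k$ variables $(y_1,\dots,y_m,z)$ and $n$ equations. The chain $\eta_1,\dots,\eta_m$ has length $q=m=k-1$ and degree $M\le 2d+2$. The polynomial degrees are $p_0=d+1$ and $p_1=\cdots=p_m=1$. The domain $(0,\infty)^m\times\R$ is an open subset of $\R^k$, and the Pfaff-chain functions are defined and smooth there. Hence the theorem bounds the number of nondegenerate roots of \eqref{eq:aug-system} in this domain by
\[
2^{q(q-1)/2}\Bigl(\prod_{r=0}^{m}p_r\Bigr)L^q,
\qquad
L=\sum_{r=0}^{m}(p_r-1)+\min(n,q)\,M+1 .
\]

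Next we evaluate the constants. The product of the degrees is $(d+1)\cdot1^{m}=d+1$. We have $\sum_r(p_r-1)=d$ and $\min(k,k-1)=k-1$, so
\[
L=d+(k-1)(2d+2)+1=(2k-1)d+2k-1 .
\]
Together with $2^{q(q-1)/2}=2^{\binom{k-1}{2}}$, this gives exactly $U_{\mathrm{aug}}(d,k)$ nondegenerate roots.

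Finally we transfer the count to $\Crit_{\mathrm{nd}}(\Phi)$. Theorem~\ref{thm:reduction} gives a bijection between $\Crit(\Phi)$ and the positive roots of $R$. Lemma~\ref{lem:aug-bijection} gives a bijection between those roots and the roots of \eqref{eq:aug-system} in $(0,\infty)^m\times\R$. The same lemma, combined with Proposition~\ref{prop:reduced-jacobian-hessian}, shows that nondegeneracy is preserved along these bijections. Therefore $|\Crit_{\mathrm{nd}}(\Phi)|$ equals the number of nondegenerate roots of \eqref{eq:aug-system} in the domain, which is at most $U_{\mathrm{aug}}(d,k)$.

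The main point to check is that the hypotheses of Theorem~\ref{thm:Khov} hold as stated in Appendix~\ref{app:external}. First, the theorem must allow an arbitrary open domain such as $(0,\infty)^m\times\R$. If it requires a domain of a particular form, such as an orthant or a product of intervals, then $(0,\infty)^m\times\R$ already has that form. Second, the theorem must accept $M$ and the $p_r$ as upper bounds rather than exact degrees. Since the bound is monotone in these parameters, using the upper bounds from Lemma~\ref{lem:aug-pfaff} is legitimate.
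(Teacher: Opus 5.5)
Your proposal is correct and follows the paper's proof essentially verbatim: it applies Theorem~\ref{thm:Khov} to \eqref{eq:aug-system} with $n=k$, $q=k-1$, $M=2d+2$, $p_0=d+1$, $p_1=\cdots=p_m=1$, and then identifies the nondegenerate roots with $\Crit_{\mathrm{nd}}(\Phi)$ via Lemma~\ref{lem:aug-bijection}. Your additional checks are points the paper leaves implicit: the domain $(0,\infty)^m\times\R$ is connected and open, as Theorem~\ref{thm:Khov} requires, and the bound is monotone in the degree parameters.
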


\begin{proof}
Apply Theorem~\ref{thm:Khov} to \eqref{eq:aug-system}. The number of variables is $n=k$, the chain length is $q=k-1$, and Lemma~\ref{lem:aug-pfaff} gives chain degree $M=2d+2$ and polynomial degrees
\[
p_0=d+1,\qquad p_1=\cdots=p_m=1.
\]
Therefore
\[
\prod_{r=0}^m p_r=d+1
\]
and
\begin{align*}
L
&=\sum_{r=0}^m(p_r-1)+\min(k,k-1)(2d+2)+1 \\
&=d+(k-1)(2d+2)+1\\
&=(2k-1)d+2k-1.
\end{align*}
Theorem~\ref{thm:Khov} gives at most
\[
2^{\binom{k-1}{2}}(d+1)\bigl((2k-1)d+2k-1\bigr)^{k-1}
\]
nondegenerate roots of \eqref{eq:aug-system}. Lemma~\ref{lem:aug-bijection} identifies those roots with $\Crit_{\mathrm{nd}}(\Phi)$.
\end{proof}

\begin{corollary}\label{cor:best-heteroscedastic-bound}
For every Gaussian mixture with $k\ge2$ components in $\R^d$,
\[
|\Crit_{\mathrm{nd}}(\Phi)|
\le
U_{\mathrm{best}}(d,k)=\min\{U_{\mathrm{het}}(d,k),U_{\mathrm{aug}}(d,k)\}.
\]
If the modal set is finite, then
\[
|\Mode(\Phi)|\le
\left\lfloor\frac{U_{\mathrm{best}}(d,k)+1}{2}\right\rfloor.
\]
\end{corollary}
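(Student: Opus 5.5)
This corollary combines results already proved, and I would argue it in two steps.

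\textbf{The critical-point bound.} Fix a mixture $\Phi$ with $k\ge2$ components in $\R^d$. Theorem~\ref{thm:main-general} gives $|\Crit_{\mathrm{nd}}(\Phi)|\le U_{\mathrm{het}}(d,k)$. Theorem~\ref{thm:augmented-bound} gives $|\Crit_{\mathrm{nd}}(\Phi)|\le U_{\mathrm{aug}}(d,k)$. Both hold for the same $\Phi$, so the count is at most their minimum, $U_{\mathrm{best}}(d,k)$.

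\textbf{The finite-mode bound.} I would apply Proposition~\ref{prop:finite-mode-transfer}, taking $\mathcal C$ to be the class of all $k$-component Gaussian mixture densities in $\R^d$ and $U=U_{\mathrm{best}}(d,k)$. Two hypotheses of that proposition need checking.

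First, every member of $\mathcal C$ must have at most $U$ nondegenerate critical points. This is exactly the first step.

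Second, $\mathcal C$ must be closed under small exponential tilts, up to a positive scalar. This is the step that needs an actual computation, and it is a short one. Completing the square gives
\[
e^{c\cdot x}\phi_i(x)=e^{c^\top\mu_i+\frac12 c^\top\Sigma_i c}\,\phi_{\mu_i+\Sigma_i c,\;\Sigma_i}(x).
\]
So $e^{c\cdot x}\Phi(x)$ is a positive combination of $k$ Gaussians with the same covariances and shifted means $\mu_i+\Sigma_i c$. Normalizing the weights to sum to one shows it is a positive multiple of a member of $\mathcal C$, for every $c$ and not only small ones. This is also the content invoked in Lemma~\ref{lem:generic-tilt}.

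Given both hypotheses, Proposition~\ref{prop:finite-mode-transfer} yields
\[
|\Mode(\Phi)|\le\left\lfloor\frac{U_{\mathrm{best}}(d,k)+1}{2}\right\rfloor
\]
whenever $\Mode(\Phi)$ is finite. This completes the proof.
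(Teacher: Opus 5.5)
Your proposal is correct and follows the paper's proof: it combines Theorems~\ref{thm:main-general} and \ref{thm:augmented-bound} to get the minimum, then applies Proposition~\ref{prop:finite-mode-transfer} to the class of all $k$-component mixtures in $\R^d$. Your completing-the-square check of tilt closure is the computation recorded in the proof of Lemma~\ref{lem:generic-tilt}, which the paper's one-line proof relies on implicitly.
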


\begin{proof}
The nondegenerate-critical bound follows by combining Theorems~\ref{thm:main-general} and \ref{thm:augmented-bound}. The finite-mode statement follows from Proposition~\ref{prop:finite-mode-transfer}.
\end{proof}

\begin{remark}
For each fixed $k$, the augmented reduced bound is polynomial in $d$:
\[
U_{\mathrm{aug}}(d,k)=O_k(d^k).
\]
By contrast, $U_{\mathrm{het}}(d,k)=O_k(2^d d^{k-1})$. Hence, for each fixed $k$,
\[
\frac{U_{\mathrm{aug}}(d,k)}{U_{\mathrm{het}}(d,k)}\to 0
\qquad (d\to\infty).
\]
There is nevertheless no uniform dominance between $U_{\mathrm{het}}$ and $U_{\mathrm{aug}}$ over all pairs $(d,k)$; the pointwise best nondegenerate-critical bound is their minimum.
\end{remark}

\subsection{Homoscedastic refinements}\label{subsec:homoscedastic}

When the covariance matrices coincide, the normalized ratios are exponentials of affine rather than quadratic functions, so the direct $d$-dimensional Pfaff chain degree falls from $2$ to $1$.

\begin{corollary}[Homoscedastic specialization]\label{cor:hom-special}
Assume that $\Sigma_1=\cdots=\Sigma_k=\Sigma$. Then
\[
|\Mode_{\mathrm{nd}}(\Phi)|\le |\Crit_{\mathrm{nd}}(\Phi)|\le U_{\mathrm{hom}}(d,k)=2^{\,d+\binom{k-1}{2}}\bigl(d+\min(d,k-1)+1\bigr)^{k-1}.
\]
If the modal set is finite, then
\[
|\Mode(\Phi)|\le \left\lfloor\frac{U_{\mathrm{hom}}(d,k)+1}{2}\right\rfloor.
\]
\end{corollary}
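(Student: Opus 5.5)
The plan is to rerun the proof of Theorem~\ref{thm:main-general} with the smaller chain degree available under a common covariance, and then to pass to finite modal sets through Proposition~\ref{prop:finite-mode-transfer}.

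\emph{Chain degree.} With $\Sigma_1=\cdots=\Sigma_k=\Sigma$ and $A=\Sigma^{-1}$, the quadratic terms in
\[
q_i(x)=-\tfrac12(x-\mu_i)^\top A(x-\mu_i)+\tfrac12(x-\mu_k)^\top A(x-\mu_k)
\]
cancel. Hence $q_i(x)=(\mu_i-\mu_k)^\top A x+c_i$ is affine, and
\[
\partial\rho_i/\partial x_j=\bigl(A(\mu_i-\mu_k)\bigr)_j\,\rho_i
\]
is a polynomial of degree $1$ in $(x,\rho)$. So $\rho_1,\dots,\rho_{k-1}$ form a Pfaff chain of length $q=k-1$ and degree $M=1$ on $\R^d$.

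\emph{Polynomial degrees.} The equations $Q_r(x,y)=b_{k,r}(x)+\sum_i y_i b_{i,r}(x)$ keep degree $p_r=2$, since $b_i(x)=-A(x-\mu_i)$ is affine. Applying Theorem~\ref{thm:Khov} with $n=d$, $q=k-1$, $M=1$ and $p_1=\cdots=p_d=2$ gives
\[
L=\sum_{r=1}^d(p_r-1)+\min(d,k-1)\cdot M+1=d+\min(d,k-1)+1.
\]
The number of nondegenerate roots is therefore at most $2^{\binom{k-1}{2}}\,2^d\,L^{k-1}=U_{\mathrm{hom}}(d,k)$.

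\emph{Roots versus critical points.} The identification of nondegenerate roots with $\Crit_{\mathrm{nd}}(\Phi)$ is the same as in Theorem~\ref{thm:main-general}. We have $G=(\alpha_k\phi_k)^{-1}\nabla\Phi$, and at a critical point $DG$ is a positive multiple of $D^2\Phi$. This yields $|\Crit_{\mathrm{nd}}(\Phi)|\le U_{\mathrm{hom}}(d,k)$, and the inequality $|\Mode_{\mathrm{nd}}|\le|\Crit_{\mathrm{nd}}|$ holds because $\Mode_{\mathrm{nd}}\subseteq\Crit_{\mathrm{nd}}$ by definition.

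\emph{Finite modal sets.} Apply Proposition~\ref{prop:finite-mode-transfer} to the class $\mathcal C$ of $k$-component homoscedastic Gaussian mixtures in $\R^d$, with $U=U_{\mathrm{hom}}(d,k)$. The hypothesis to check is closure under tilting. Lemma~\ref{lem:generic-tilt} states that $e^{c\cdot x}\Phi$ is a positive multiple of a homoscedastic mixture with the same covariance, and the bound above applies uniformly to every member of $\mathcal C$. This gives $|\Mode(\Phi)|\le\lfloor (U_{\mathrm{hom}}+1)/2\rfloor$.

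The only point needing care is how the degree enters Theorem~\ref{thm:Khov}. The polynomials $Q_r$ have degree $2$ because of the products $y_i x_j$, while the chain degree is $1$. So I expect the main obstacle to be confirming that the version of Khovanskii's bound in the appendix separates the polynomial degrees $p_r$ from the chain degree $M$ in exactly the form used for $L$ above, namely $\sum(p_r-1)+\min(n,q)M+1$.
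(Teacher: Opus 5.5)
Your proposal is correct and matches the paper's proof: the ratios $\rho_i$ are exponentials of affine functions, so the chain degree drops to $M=1$, and rerunning Theorem~\ref{thm:Khov} with $p_r=2$ gives $L=d+\min(d,k-1)+1$; the finite-mode bound then follows from Proposition~\ref{prop:finite-mode-transfer}, with closure of the homoscedastic class under tilting supplied by Lemma~\ref{lem:generic-tilt}. The concern you flag at the end is settled by the appendix statement of Theorem~\ref{thm:Khov}, which does use $L=\sum_r(p_r-1)+\min(n,q)M+1$ with the polynomial degrees $p_r$ kept separate from the chain degree $M$.
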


\begin{proof}
In the homoscedastic case, each $\rho_i$ is the exponential of an affine function, so the Pfaff chain degree becomes $M=1$. Applying Theorem~\ref{thm:Khov} exactly as in the proof of Theorem~\ref{thm:main-general}, but with $M=1$, gives
\[
L=d+\min(d,k-1)+1
\]
and therefore the displayed nondegenerate-critical bound. For the finite-mode statement, apply Proposition~\ref{prop:finite-mode-transfer}; exponential tilting preserves the homoscedastic class by Lemma~\ref{lem:generic-tilt}.
\end{proof}

The homoscedastic case admits a more structural improvement. In that setting every critical point lies in the convex hull of the means \citep[Corollary~1]{RayLindsay2005}, and in fact the entire mode-counting problem reduces to the affine span of the means.

\begin{theorem}[Affine-rank reduction]\label{thm:affine-rank}
Assume that $\Sigma_1=\cdots=\Sigma_k=\Sigma$ and let
\[
r=\dim \aff\{\mu_1,\dots,\mu_k\}.
\]
Then, after an invertible affine change of variables and an orthogonal decomposition $\R^d=\R^r\times\R^{d-r}$, the density factors in the form
\[
\widetilde \Phi(u,v)=C e^{-\|v\|^2/2}G(u),
\]
where $C>0$ is constant and $G$ is an $r$-dimensional homoscedastic Gaussian mixture with the same number of components. When $r=0$, $G$ is understood as the unit density on $\R^0$.
\end{theorem}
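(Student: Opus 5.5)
The plan is to whiten the common covariance first, then rotate so that the means span the first $r$ coordinates. This makes the Gaussian kernel factor into a $u$-part and a $v$-part.

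\emph{Step 1 (whitening).} Set $x=\Sigma^{1/2}x'$. The Jacobian factor $\det(\Sigma)^{1/2}$ is constant, and each component becomes an isotropic Gaussian with mean $\mu_i'=\Sigma^{-1/2}\mu_i$. Affine rank is invariant under invertible linear maps, so $\dim\aff\{\mu_i'\}=r$.

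\emph{Step 2 (translation and rotation).} Translate by $-\mu_k'$. The translated means $\mu_i'-\mu_k'$ then span a linear subspace $V$ of dimension exactly $r$, since the affine hull is $\mu_k'+V$. Choose an orthogonal matrix $O$ mapping $V$ onto $\R^r\times\{0\}$, and write the new coordinates as $(u,v)$ with $u\in\R^r$ and $v\in\R^{d-r}$. Each transformed mean then has the form $(m_i,0)$ with $m_i\in\R^r$. Call the composite invertible affine map $T$ and define $\widetilde\Phi=\Phi\circ T^{-1}$, up to the constant Jacobian, which is absorbed into $C$.

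\emph{Step 3 (factorization).} The isotropic kernel splits because the Euclidean norm is orthogonally additive:
\[
\exp\left(-\tfrac12\|(u,v)-(m_i,0)\|^2\right)=\exp\left(-\tfrac12\|u-m_i\|^2\right)\exp\left(-\tfrac12\|v\|^2\right).
\]
Summing over $i$ with weights $\alpha_i$ gives $\widetilde\Phi(u,v)=Ce^{-\|v\|^2/2}G(u)$. Here $G(u)=\sum_i\alpha_i(2\pi)^{-r/2}e^{-\|u-m_i\|^2/2}$ is a homoscedastic mixture on $\R^r$ with identity covariance and the same $k$ weights. The constant $C$ collects $(2\pi)^{-(d-r)/2}$ and the Jacobian factor. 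In the case $r=0$, all $m_i$ are equal and $G\equiv1$ on $\R^0$, which matches the stated convention.

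There is no real obstacle here; this is bookkeeping. The one point needing care is that the translation and rotation must make all transformed means have zero $v$-component simultaneously. That is exactly what choosing $V$ as the direction space of the affine hull guarantees. I would also note that $G$ keeps the same number $k$ of components even if some $m_i$ coincide, since components are counted with their weights and are not merged. If one wants a statement that is literally orthogonal rather than just affine, one can instead use the $\Sigma^{-1}$ inner product, but the whitening route above is the cleanest.
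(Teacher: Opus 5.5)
Your proof is correct and is essentially the paper's argument. The paper uses the single affine map $T(x)=O\Sigma^{-1/2}(x-a)$, with $a$ any point of the affine hull where you take $a=\mu_k$, and pushes the density forward so that the Jacobian cancels the $\det(\Sigma)^{-1/2}$ normalization, leaving $C=(2\pi)^{-(d-r)/2}$; it then splits the isotropic kernel exactly as in your Step 3, and composing instead of pushing forward only changes the positive constant $C$.
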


\begin{proof}
See Appendix~\ref{subsec:affine-rank-proof}.
\end{proof}

\begin{corollary}\label{cor:affine-rank-consequence}
Under the hypotheses of Theorem~\ref{thm:affine-rank}, all critical points and all modes of $\Phi$ lie in the affine span of the means, and the numbers of critical points, points of $\Crit_{\mathrm{nd}}(\Phi)$, and modes of $\Phi$ agree exactly with the corresponding numbers for $G$.
\end{corollary}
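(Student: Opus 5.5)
The plan is to use the factorization in Theorem~\ref{thm:affine-rank} and read off every statement from the product structure. Let $T:\R^d\to\R^r\times\R^{d-r}$ be the invertible affine change of variables from that theorem, so that $\Phi=\widetilde\Phi\circ T$ up to a positive Jacobian constant, and
\[
\widetilde\Phi(u,v)=Ce^{-\|v\|^2/2}G(u).
\]
Invertible affine maps send critical points to critical points and local maxima to local maxima. They also preserve nondegeneracy at critical points, because $D^2\Phi(x)=DT^\top D^2\widetilde\Phi(Tx)\,DT$ when $\nabla\widetilde\Phi(Tx)=0$; the affine part of $T$ contributes no second-order term. It therefore suffices to prove each claim for $\widetilde\Phi$. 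I will also check from the construction in the appendix that $T$ maps $\aff\{\mu_1,\dots,\mu_k\}$ onto $\R^r\times\{0\}$. This should be automatic, since the component means of $\widetilde\Phi$ have the form $(\tilde\mu_i,0)$.

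First I compute the gradient:
\[
\nabla\widetilde\Phi(u,v)=Ce^{-\|v\|^2/2}\bigl(\nabla G(u),\,-vG(u)\bigr).
\]
Since $G>0$ and the exponential factor is positive, $\nabla\widetilde\Phi(u,v)=0$ holds if and only if $v=0$ and $\nabla G(u)=0$. Hence $\Crit(\widetilde\Phi)=\Crit(G)\times\{0\}$. Pulling back by $T$ shows that all critical points lie in the affine span of the means, and that the critical sets are in bijection.

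Next I compute the Hessian at a critical point $(u,0)$. The mixed terms carry a factor $v$ or $\nabla G(u)$, so they vanish, and the Hessian is block diagonal:
\[
D^2\widetilde\Phi(u,0)=C\begin{pmatrix}D^2G(u)&0\\0&-G(u)I_{d-r}\end{pmatrix}.
\]
Because $G(u)>0$, this matrix is nonsingular if and only if $D^2G(u)$ is nonsingular. This gives the bijection of $\Crit_{\mathrm{nd}}$.

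For modes, suppose first that $u$ is a local maximum of $G$. Then near $(u,0)$,
\[
\widetilde\Phi(u',v)\le Ce^{0}G(u')\le CG(u)=\widetilde\Phi(u,0),
\]
so $(u,0)$ is a local maximum of $\widetilde\Phi$. Conversely, if $(u,0)$ is a local maximum, restricting to the slice $v=0$ shows that $u$ is a local maximum of $G$. Since all modes are critical (Lemma~\ref{lem:modal-critical}), they have $v=0$, so $\Mode(\widetilde\Phi)=\Mode(G)\times\{0\}$.

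The case $r=0$ is degenerate but direct. All means coincide, $G\equiv1$, and $\widetilde\Phi$ is a single Gaussian with exactly one critical point, a nondegenerate maximum, matching the unit density on $\R^0$. The main point to be careful about is the mode direction in the first paragraph: the local-maximum definition uses $\le$, so I must use non-strict inequalities, which the displayed bound already does. The other point is confirming that $T$ carries the affine span of the means onto $\R^r\times\{0\}$, which I will read off from the appendix construction.
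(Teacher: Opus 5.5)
Your proposal is correct and follows the same route as the paper. Like the paper, you factor $\nabla\widetilde\Phi$ to get $\Crit(\widetilde\Phi)=\Crit(G)\times\{0\}$ and read nondegeneracy off the block-diagonal Hessian with lower block $-CG(u)I_{d-r}$. You are also more careful than the paper on two points it leaves implicit: the transfer back through $T$ (including $T(\aff\{\mu_i\})=\R^r\times\{0\}$), and the direct non-strict inequality argument for $\Mode(\widetilde\Phi)=\Mode(G)\times\{0\}$, which covers degenerate modes that the Hessian alone would not handle.
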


\begin{proof}
For
\[
\widetilde \Phi(u,v)=C e^{-\|v\|^2/2}G(u),
\]
one has
\[
\nabla_u\widetilde \Phi(u,v)=Ce^{-\|v\|^2/2}\nabla G(u),
\qquad
\nabla_v\widetilde \Phi(u,v)=-Ce^{-\|v\|^2/2}G(u)v.
\]
Thus $\nabla\widetilde \Phi(u,v)=0$ if and only if $v=0$ and $\nabla G(u)=0$. At such a point the Hessian splits block diagonally, and its lower-right block is $-CG(u)I_{d-r}$, so nondegeneracy and local maximality are preserved as well.
\end{proof}

Combining Corollary~\ref{cor:affine-rank-consequence} with Corollary~\ref{cor:hom-special} gives the direct $d$-dimensional homoscedastic bound in the intrinsic dimension $r$.

\begin{corollary}[Affine-rank homoscedastic bound]\label{cor:affine-rank-bound}
Under the hypotheses of Theorem~\ref{thm:affine-rank}, if $r=0$, then $\Phi$ has exactly one nondegenerate critical point and one mode. If $r\ge1$, then
\[
|\Mode_{\mathrm{nd}}(\Phi)|\le |\Crit_{\mathrm{nd}}(\Phi)|\le U_{\mathrm{hom}}(r,k)=2^{\,r+\binom{k-1}{2}}\bigl(r+\min(r,k-1)+1\bigr)^{k-1}.
\]
If $r\ge1$ and the modal set is finite, then
\[
|\Mode(\Phi)|\le \left\lfloor\frac{U_{\mathrm{hom}}(r,k)+1}{2}\right\rfloor.
\]
\end{corollary}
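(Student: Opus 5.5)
The plan is to reduce everything to the intrinsic $r$-dimensional mixture $G$ and then apply the results already proved in dimension $r$. I would first invoke Theorem~\ref{thm:affine-rank} to write $\widetilde\Phi(u,v)=Ce^{-\|v\|^2/2}G(u)$, where $G$ is a homoscedastic $k$-component Gaussian mixture on $\R^r$. I would then apply Corollary~\ref{cor:affine-rank-consequence}, which says that the critical points, nondegenerate critical points and modes of $\Phi$ correspond bijectively to those of $G$. An invertible affine change of variables preserves criticality, the nonsingularity of the Hessian (via congruence by the linear part), and local maximality, so these counts for $\widetilde\Phi$ are the same as for $\Phi$.

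\textbf{Case $r=0$.} All the means coincide, and $\widetilde\Phi(v)=Ce^{-\|v\|^2/2}$ is a single Gaussian bump, with $G$ the unit density on $\R^0$. Its only critical point is $v=0$, and the Hessian there is $-CI_d$. That point is therefore nondegenerate and a strict global maximum, so there is exactly one nondegenerate critical point and one mode. Alternatively, one can read this off from the corollary with $G$ constant.

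\textbf{Case $r\ge1$.}
\begin{itemize}
\item Apply Corollary~\ref{cor:hom-special} to $G$ in dimension $r$ with $k$ components. This gives $|\Crit_{\mathrm{nd}}(G)|\le U_{\mathrm{hom}}(r,k)$, and the counts transfer to $\Phi$.
\item The inclusion $\Mode_{\mathrm{nd}}\subseteq\Crit_{\mathrm{nd}}$ holds by definition.
\item For the finite-mode statement, note that $\Mode(\Phi)$ finite forces $\Mode(G)$ finite with the same cardinality, again by Corollary~\ref{cor:affine-rank-consequence}. The finite-mode part of Corollary~\ref{cor:hom-special}, applied to $G$ in $\R^r$, then yields $\lfloor (U_{\mathrm{hom}}(r,k)+1)/2\rfloor$.
\end{itemize}

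\textbf{Edge case $k=1$.} Corollary~\ref{cor:hom-special} is stated under the paper's standing assumption $k\ge2$. If $k=1$, then $r=0$ automatically, since a single mean has a zero-dimensional affine hull, so this case is covered by the first branch.

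\textbf{Main obstacle.} The only real point of care is the transfer of \emph{modes}, including possibly non-isolated ones, through the product factorization. I need that $(u,0)$ is a local maximum of $\widetilde\Phi$ exactly when $u$ is a local maximum of $G$. One direction is direct: if $u$ is a local maximum of $G$, then $\widetilde\Phi(u',v')\le Ce^{-\|v'\|^2/2}G(u)\le\widetilde\Phi(u,0)$ nearby. Conversely, restricting to $v=0$ gives the reverse implication. Corollary~\ref{cor:affine-rank-consequence} already asserts this, so I would simply cite it and note the one-line justification above.
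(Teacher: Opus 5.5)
Your proposal is correct and follows the paper's own argument. You treat $r=0$ as a single Gaussian, and for $r\ge1$ you apply Corollary~\ref{cor:hom-special} to the reduced mixture $G$ on $\R^r$ and transfer the counts back to $\Phi$ via Corollary~\ref{cor:affine-rank-consequence}. Your additional remarks on the $k=1$ edge case and on the product argument for transferring possibly degenerate modes are accurate; they fill in details the paper leaves implicit.
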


\begin{proof}
If $r=0$, then all means coincide and the homoscedastic mixture is a single Gaussian density, so it has exactly one nondegenerate critical point and one mode. Assume $r\ge1$. Apply Corollary~\ref{cor:hom-special} to the $r$-dimensional mixture $G$ from Theorem~\ref{thm:affine-rank}. The count identities follow from Corollary~\ref{cor:affine-rank-consequence}.
\end{proof}

In the homoscedastic setting the augmented reduced ratio system itself simplifies further, because $M(y)$ becomes a scalar multiple of the common precision matrix.

\begin{theorem}[Augmented homoscedastic reduced bound]\label{thm:hom-augmented-bound}
Let $\Phi$ be a homoscedastic Gaussian mixture with $k\ge 2$ components. Then
\[
|\Mode_{\mathrm{nd}}(\Phi)|\le |\Crit_{\mathrm{nd}}(\Phi)|\le
U_{\mathrm{aug,hom}}(k)=2^{\binom{k-1}{2}+1}(2k)^{k-1}.
\]
\end{theorem}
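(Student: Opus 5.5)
The plan is to rerun the augmented reduction of Section~\ref{subsec:reduced-bound} in the special case $A_1=\cdots=A_k=A=\Sigma^{-1}$. The determinant $D(y)=\det M(y)$ of degree $d$ then collapses to a scalar of degree $1$, and this is what removes every dependence on $d$.

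\textbf{Step 1: simplify the reduced system.} With $m=k-1$ and $s(y)=1+\sum_{i=1}^m y_i$, we have $M(y)=s(y)A$ and $\nu(y)=A\bar N(y)$, where $\bar N(y)=\mu_k+\sum_{i=1}^m y_i\mu_i$. Hence
\[
X(y)=\frac{\bar N(y)}{s(y)},
\]
with $\bar N$ affine and $s>0$ on $(0,\infty)^m$. Moreover
\[
q_i(x)=-\tfrac12(x-\mu_i)^\top A(x-\mu_i)+\tfrac12(x-\mu_k)^\top A(x-\mu_k)
\]
is affine in $x$, because the quadratic terms cancel. Theorem~\ref{thm:reduction} and Proposition~\ref{prop:reduced-jacobian-hessian} were proved for arbitrary covariances. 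So positive roots of $R$ correspond bijectively to $\Crit(\Phi)$, and nondegeneracy of a root is equivalent to nonsingularity of the Hessian of $\Phi$ at the corresponding critical point.

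\textbf{Step 2: augment with a scalar reciprocal.} Introduce a variable $z$ and set $\eta_i(y,z)=\exp\{q_i(z\bar N(y))\}$. Consider the square system in the $k$ variables $(y,z)$:
\[
E_0=z\,s(y)-1=0,\qquad E_i=y_i-\beta_i\eta_i(y,z)=0,\quad i=1,\dots,m.
\]
The argument of Lemma~\ref{lem:aug-bijection} carries over verbatim, with $D$ replaced by $s$. The map $y\mapsto(y,s(y)^{-1})$ is a bijection from positive roots of $R$ to roots of this system in $(0,\infty)^m\times\R$. Since $\partial E_0/\partial z=s(y)>0$, the Schur complement identifies the Jacobian of the system with $DR(y)$ up to a nonzero factor. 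Therefore nondegenerate roots of the augmented system correspond exactly to $\Crit_{\mathrm{nd}}(\Phi)$.

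\textbf{Step 3: degree bookkeeping, the only real content.} Each coordinate of $z\bar N(y)$ has total degree at most $2$ in $(y,z)$, and $q_i$ is affine, so $P_i=q_i(z\bar N(y))$ has degree at most $2$. Each derivative $\partial P_i/\partial w$ then has degree at most $1$. Hence $\partial\eta_i/\partial w=\eta_i\,\partial P_i/\partial w$ is a polynomial of degree at most $2$ in $(y,z,\eta_i)$. Consequently $\eta_1,\dots,\eta_m$ form a Pfaff chain of length $q=k-1$ and degree $M=2$. The equation $E_0$ has degree $p_0=2$, and each $E_i$ has degree $p_i=1$ in $(y,z,\eta)$.

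\textbf{Step 4: apply Theorem~\ref{thm:Khov}.} We take $n=k$, so $\prod_r p_r=2$ and
\[
L=\sum_{r=0}^m(p_r-1)+\min(k,k-1)\cdot 2+1=1+2(k-1)+1=2k.
\]
This gives at most $2^{\binom{k-1}{2}}\cdot 2\cdot(2k)^{k-1}=U_{\mathrm{aug,hom}}(k)$ nondegenerate roots. The inequality $|\Mode_{\mathrm{nd}}(\Phi)|\le|\Crit_{\mathrm{nd}}(\Phi)|$ is immediate from the definition of $\Mode_{\mathrm{nd}}(\Phi)$.

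The main point to verify carefully is Step 3. The chain degree must genuinely be $2$, not $3$: the quadratic part of $q_i$ must cancel exactly, which relies on the common covariance. In addition, the Khovanskii count must not require extra chain elements for $1/s$; using $z$ as an ordinary algebraic variable is exactly what avoids that, as in Section~\ref{subsec:reduced-bound}.
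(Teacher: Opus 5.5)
Your proposal is correct and follows the paper's proof essentially step for step. Both use the same reciprocal variable $z$ for $1+\sum_j y_j$, the same degree-$2$ Pfaff chain $\eta_i$, the degrees $p_0=2$ and $p_1=\cdots=p_m=1$, and the same application of Theorem~\ref{thm:Khov} with $L=2k$. The only cosmetic difference is that the paper first makes an affine change of variables to common covariance $I$, whereas you work directly with $A=\Sigma^{-1}$ via $M(y)=s(y)A$; this is equally valid, since Theorem~\ref{thm:reduction} and Proposition~\ref{prop:reduced-jacobian-hessian} hold for arbitrary covariances.
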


\begin{proof}
For the dimension-free bound, it suffices to work after an invertible affine change of variables with common covariance $I$, because such a change preserves critical points, nondegeneracy, and modes. In the notation of the reduced system,
\[
M(y)=\left(1+\sum_{j=1}^m y_j\right)I,
\qquad
X(y)=\frac{\mu_k+\sum_{j=1}^m y_j\mu_j}{1+\sum_{j=1}^m y_j}.
\]
Moreover $q_i(x)$ is affine in $x$. Hence there are constants $a_i\in\R$ and affine linear functions $L_i(y)$ such that
\[
q_i(X(y))=a_i+\frac{L_i(y)}{1+\sum_{j=1}^m y_j}.
\]
Introduce $z$ and define
\[
\eta_i(y,z)=\exp(a_i+zL_i(y)).
\]
The augmented homoscedastic system is
\begin{equation}\label{eq:aug-hom-system}
z\left(1+\sum_{j=1}^m y_j\right)-1=0,
\qquad
y_i-\beta_i\eta_i(y,z)=0,\quad i=1,\dots,m.
\end{equation}
The same bijection and nondegeneracy proof as in Lemma~\ref{lem:aug-bijection} applies, with $D(y)=1+\sum_jy_j$.

The functions $\eta_1,\dots,\eta_m$ form a Pfaff chain of length $m=k-1$ and degree at most $2$: the exponent $a_i+zL_i(y)$ has total degree at most $2$, its derivatives have degree at most $1$, and
\[
\frac{\partial\eta_i}{\partial w}=\eta_i\frac{\partial}{\partial w}\{a_i+zL_i(y)\}
\]
is therefore polynomial in $(y,z,\eta_i)$ of degree at most $2$. In \eqref{eq:aug-hom-system}, the first equation has degree $2$ and the remaining equations have degree $1$. Applying Theorem~\ref{thm:Khov} with
\[
n=k,\qquad q=k-1,\qquad M=2,\qquad p_0=2,\qquad p_1=\cdots=p_m=1,
\]
gives
\[
L=(2-1)+(k-1)\cdot2+1=2k
\]
and hence
\[
2^{\binom{k-1}{2}}\cdot 2\cdot(2k)^{k-1}
=
2^{\binom{k-1}{2}+1}(2k)^{k-1}.
\]
This proves the dimension-free augmented homoscedastic bound.
\end{proof}

\begin{corollary}[Pointwise best homoscedastic bounds]\label{cor:best-hom-bound}
Let $\Phi$ be a homoscedastic Gaussian mixture with $k\ge2$ components, and let
\[
r=\dim\aff\{\mu_1,\dots,\mu_k\}.
\]
If $r=0$, then $\Phi$ has exactly one nondegenerate critical point and one mode. If $r\ge1$, then
\[
|\Crit_{\mathrm{nd}}(\Phi)|
\le
U_{\mathrm{best,hom}}(r,k)=
\min\{U_{\mathrm{hom}}(r,k),U_{\mathrm{aug}}(r,k),U_{\mathrm{aug,hom}}(k)\}.
\]
If $r\ge1$ and the modal set is finite, then
\[
|\Mode(\Phi)|\le
\left\lfloor\frac{U_{\mathrm{best,hom}}(r,k)+1}{2}\right\rfloor.
\]
\end{corollary}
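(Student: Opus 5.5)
The plan is to pass to the intrinsic mixture $G$ and then take the minimum of three bounds that are already available for it. The case $r=0$ is exactly the first assertion of Corollary~\ref{cor:affine-rank-bound}, so there is nothing to prove there. Assume $r\ge1$. By Theorem~\ref{thm:affine-rank}, after an invertible affine change of variables, $\Phi$ becomes $Ce^{-\|v\|^2/2}G(u)$, where $G$ is an $r$-dimensional homoscedastic Gaussian mixture with the same number $k\ge2$ of components. Corollary~\ref{cor:affine-rank-consequence} then gives $|\Crit_{\mathrm{nd}}(\Phi)|=|\Crit_{\mathrm{nd}}(G)|$ and $|\Mode(\Phi)|=|\Mode(G)|$; in particular, the modal set of $\Phi$ is finite if and only if that of $G$ is. So it suffices to prove both displayed inequalities for $G$ on $\R^r$.

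For the critical-point bound, we apply three earlier results to $G$, which is a $k$-component mixture in $\R^r$ with $k\ge2$:
\begin{itemize}
\item Corollary~\ref{cor:hom-special} with $d=r$ gives $|\Crit_{\mathrm{nd}}(G)|\le U_{\mathrm{hom}}(r,k)$.
\item Theorem~\ref{thm:augmented-bound}, which holds for arbitrary covariances and hence for homoscedastic ones, gives $|\Crit_{\mathrm{nd}}(G)|\le U_{\mathrm{aug}}(r,k)$.
\item Theorem~\ref{thm:hom-augmented-bound} gives $|\Crit_{\mathrm{nd}}(G)|\le U_{\mathrm{aug,hom}}(k)$.
\end{itemize}
Taking the minimum of the three yields $|\Crit_{\mathrm{nd}}(G)|\le U_{\mathrm{best,hom}}(r,k)$, and transferring back to $\Phi$ gives the first inequality.

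For the finite-mode statement, we apply Proposition~\ref{prop:finite-mode-transfer} to the class $\mathcal C_r$ of $k$-component homoscedastic Gaussian mixtures on $\R^r$, with $U=U_{\mathrm{best,hom}}(r,k)$. The previous paragraph shows that every member of $\mathcal C_r$ has at most $U$ nondegenerate critical points. The class is also closed under tilting: by Lemma~\ref{lem:generic-tilt}, for every $c\in\R^r$ the function $e^{c\cdot u}G(u)$ is a positive multiple of a homoscedastic mixture with the same common covariance and the same number of components. Completing the square shows this holds for every $c$, not only for small $c$. Hence $|\Mode(G)|\le\lfloor(U+1)/2\rfloor$ whenever $\Mode(G)$ is finite, and Corollary~\ref{cor:affine-rank-consequence} transfers this bound to $\Phi$.

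The step that needs the most care is the tilting-closure hypothesis. It is verified for the class $\mathcal C_r$ of all homoscedastic mixtures in dimension $r$, not for mixtures of a fixed affine rank $r$. The tilted mixture $G$ may have means with smaller affine rank, but this causes no difficulty: the three bounds above hold for every member of $\mathcal C_r$ regardless of the rank of its means, since they depend only on the ambient dimension $r$ and on $k$.
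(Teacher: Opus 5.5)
Your proof is correct and follows the paper's own argument. You pass to the $r$-dimensional mixture $G$ via Theorem~\ref{thm:affine-rank} and Corollary~\ref{cor:affine-rank-consequence}, take the minimum of $U_{\mathrm{hom}}(r,k)$, $U_{\mathrm{aug}}(r,k)$ and $U_{\mathrm{aug,hom}}(k)$, and get the finite-mode bound from Proposition~\ref{prop:finite-mode-transfer} with the tilting closure of Lemma~\ref{lem:generic-tilt}. Your closing caveat is unnecessary but harmless: a homoscedastic tilt shifts every mean by the same vector $\Sigma c$, so it preserves the affine rank, as Lemma~\ref{lem:generic-tilt} states, and your class of all homoscedastic mixtures on $\R^r$ does not depend on the rank anyway.
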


\begin{proof}
If $r=0$, then all means coincide, so the mixture is a single Gaussian density and has one nondegenerate critical point and one mode. If $r\ge1$, apply Theorem~\ref{thm:affine-rank} and Corollary~\ref{cor:affine-rank-consequence}. The reduced $r$-dimensional mixture $G$ satisfies the direct homoscedastic bound $U_{\mathrm{hom}}(r,k)$, the general augmented bound $U_{\mathrm{aug}}(r,k)$, and the dimension-free augmented homoscedastic bound $U_{\mathrm{aug,hom}}(k)$. Taking the minimum gives the stated nondegenerate-critical bound. The finite-mode statement follows from Proposition~\ref{prop:finite-mode-transfer} and Lemma~\ref{lem:generic-tilt}.
\end{proof}

\begin{remark}\label{rem:comparison-aeh}
The published bound of \citet{AmendolaEngstromHaase2019} is
\[
U_{\mathrm{AEH}}(d,k)=2^{\,d+\binom{k}{2}}(5+3d)^k,
\]
whereas our direct $d$-dimensional bound is
\[
U_{\mathrm{het}}(d,k)=2^{\,d+\binom{k-1}{2}}\bigl(d+2\min(d,k-1)+1\bigr)^{k-1}.
\]
The normalization by one reference component replaces the $k$ exponentials appearing in the direct critical-point system by the $k-1$ ratios $\rho_1,\dots,\rho_{k-1}$. This lowers the Pfaff-chain length by one, and correspondingly replaces the prefactor $2^{\binom{k}{2}}$ by $2^{\binom{k-1}{2}}$ and the outer exponent $k$ by $k-1$. Moreover,
\[
d+2\min(d,k-1)+1\le 3d+1<5+3d,
\]
so
\[
U_{\mathrm{het}}(d,k)<2^{-(k-1)}(5+3d)^{-1}U_{\mathrm{AEH}}(d,k)<U_{\mathrm{AEH}}(d,k)
\]
for every $d\ge 1$ and $k\ge 2$. The augmented bound $U_{\mathrm{aug}}$ gives a second regime of improvement: for each fixed $k$, it is polynomial in $d$ and hence eventually beats both $U_{\mathrm{het}}$ and $U_{\mathrm{AEH}}$.
\end{remark}

\subsection{A numerical comparison of the explicit upper bounds}\label{subsec:upper-numerical-comparison}

We now compare the competing upper-bound families numerically. The augmented bound $U_{\mathrm{aug}}(d,k)$ is not uniformly smaller than $U_{\mathrm{het}}(d,k)$ for every pair $(d,k)$, but it eventually becomes smaller for each fixed $k$. Define
\[
d_\ast(k)=\min\left\{d\in\mathbb N: U_{\mathrm{aug}}(d,k)\le U_{\mathrm{het}}(d,k)\right\}.
\]
Table~\ref{tab:crossover} records these first crossover dimensions for $2\le k\le 11$. Thus, for example, when $k=5$ the direct $d$-dimensional bound $U_{\mathrm{het}}(d,5)$ is smaller through to $d=14$, whereas the augmented bound $U_{\mathrm{aug}}(d,5)$ first becomes smaller at $d=15$.

The same phenomenon occurs when one compares $U_{\mathrm{aug}}$ with the published bound $U_{\mathrm{AEH}}$. Define
\[
d_{\mathrm{AEH}}(k)=\min\left\{d\in\mathbb N: U_{\mathrm{aug}}(d,k)\le U_{\mathrm{AEH}}(d,k)\right\}.
\]
By definition, $U_{\mathrm{aug}}(d,k)>U_{\mathrm{AEH}}(d,k)$ for $1\le d<d_{\mathrm{AEH}}(k)$, and Table~\ref{tab:crossover} records the first dimension at which $U_{\mathrm{aug}}(d,k)$ is no larger than $U_{\mathrm{AEH}}(d,k)$.

\begin{table}[htbp]
\centering
\caption{Crossover dimensions for the augmented reduced bound.}
\label{tab:crossover}
\small
\begin{tabular}{lcccccccccc}
\toprule
$k$ & 2 & 3 & 4 & 5 & 6 & 7 & 8 & 9 & 10 & 11 \\
\midrule
$d_\ast(k)$ & 3 & 7 & 10 & 15 & 19 & 24 & 29 & 34 & 39 & 45 \\
$d_{\mathrm{AEH}}(k)$ & 1 & 1 & 1 & 1 & 1 & 4 & 7 & 10 & 13 & 16 \\
\bottomrule
\end{tabular}
\end{table}

For the table below, write
\[
\begin{aligned}
U_{\mathrm{best}}(d,k)
&=\min\left\{U_{\mathrm{het}}(d,k),U_{\mathrm{aug}}(d,k)\right\},\qquad\text{and}\\
U_{\mathrm{best,hom}}(r,k)
&=\min\left\{U_{\mathrm{hom}}(r,k),U_{\mathrm{aug}}(r,k),U_{\mathrm{aug,hom}}(k)\right\}\qquad (r\ge1).
\end{aligned}
\]
Table~\ref{tab:upper-comparison} compares the conjectural AIM value with these pointwise best explicit nondegenerate-critical-point upper bounds and with the individual bound families. When the modal set is finite, the corresponding mode bounds are obtained by replacing $U_{\mathrm{best}}$ and $U_{\mathrm{best,hom}}$ by $\lfloor(U_{\mathrm{best}}+1)/2\rfloor$ and $\lfloor(U_{\mathrm{best,hom}}+1)/2\rfloor$, respectively. The homoscedastic block is a formal comparison of the intrinsic-dimension formulas; only rows satisfying $r\le k-1$ are realizable as affine ranks of $k$ component means.

\begin{table}[htbp]
\centering
\caption{Representative values of the conjectural AIM count and the explicit upper bounds in this manuscript.}
\label{tab:upper-comparison}
\scriptsize
\setlength{\tabcolsep}{4pt}
\begin{tabular}{ccccccc}
\toprule
\multicolumn{7}{c}{General heteroscedastic setting} \\
\midrule
$d$ & $k$ & $m_{\mathrm{AIM}}(d,k)$ & $U_{\mathrm{best}}(d,k)$ & $U_{\mathrm{het}}(d,k)$ & $U_{\mathrm{aug}}(d,k)$ & $U_{\mathrm{AEH}}(d,k)$ \\
\midrule
2 & 2 & $3$ & $20$ & $20$ & $27$ & $968$ \\
2 & 3 & $6$ & $392$ & $392$ & $1.35\times 10^{3}$ & $4.26\times 10^{4}$ \\
2 & 4 & $10$ & $1.10\times 10^{4}$ & $1.10\times 10^{4}$ & $2.22\times 10^{5}$ & $3.75\times 10^{6}$ \\
3 & 3 & $10$ & $1.02\times 10^{3}$ & $1.02\times 10^{3}$ & $3.20\times 10^{3}$ & $1.76\times 10^{5}$ \\
3 & 4 & $20$ & $6.40\times 10^{4}$ & $6.40\times 10^{4}$ & $7.02\times 10^{5}$ & $1.97\times 10^{7}$ \\
4 & 4 & $35$ & $1.70\times 10^{5}$ & $1.70\times 10^{5}$ & $1.72\times 10^{6}$ & $8.55\times 10^{7}$ \\
4 & 5 & $70$ & $2.92\times 10^{7}$ & $2.92\times 10^{7}$ & $1.31\times 10^{9}$ & $2.33\times 10^{10}$ \\
5 & 5 & $126$ & $7.87\times 10^{7}$ & $7.87\times 10^{7}$ & $3.27\times 10^{9}$ & $1.05\times 10^{11}$ \\
10 & 4 & $286$ & $4.02\times 10^{7}$ & $4.02\times 10^{7}$ & $4.02\times 10^{7}$ & $9.83\times 10^{10}$ \\
10 & 5 & $1001$ & $8.54\times 10^{9}$ & $8.54\times 10^{9}$ & $6.76\times 10^{10}$ & $5.51\times 10^{13}$ \\
\midrule
\multicolumn{7}{c}{Homoscedastic intrinsic-dimension formulas} \\
\midrule
$r$ & $k$ & $m_{\mathrm{AIM}}(r,k)$ & $U_{\mathrm{best,hom}}(r,k)$ & $U_{\mathrm{hom}}(r,k)$ & $U_{\mathrm{aug}}(r,k)$ & $U_{\mathrm{aug,hom}}(k)$ \\
\midrule
1 & 2 & $2$ & $6$ & $6$ & $12$ & $8$ \\
1 & 3 & $3$ & $36$ & $36$ & $400$ & $144$ \\
2 & 3 & $6$ & $144$ & $200$ & $1.35\times 10^{3}$ & $144$ \\
1 & 4 & $4$ & $432$ & $432$ & $4.39\times 10^{4}$ & $8.19\times 10^{3}$ \\
2 & 4 & $10$ & $4.00\times 10^{3}$ & $4.00\times 10^{3}$ & $2.22\times 10^{5}$ & $8.19\times 10^{3}$ \\
3 & 4 & $20$ & $8.19\times 10^{3}$ & $2.20\times 10^{4}$ & $7.02\times 10^{5}$ & $8.19\times 10^{3}$ \\
1 & 5 & $5$ & $1.04\times 10^{4}$ & $1.04\times 10^{4}$ & $1.34\times 10^{7}$ & $1.28\times 10^{6}$ \\
2 & 5 & $15$ & $1.60\times 10^{5}$ & $1.60\times 10^{5}$ & $1.02\times 10^{8}$ & $1.28\times 10^{6}$ \\
3 & 5 & $35$ & $1.23\times 10^{6}$ & $1.23\times 10^{6}$ & $4.30\times 10^{8}$ & $1.28\times 10^{6}$ \\
4 & 5 & $70$ & $1.28\times 10^{6}$ & $6.72\times 10^{6}$ & $1.31\times 10^{9}$ & $1.28\times 10^{6}$ \\
\bottomrule
\end{tabular}
\end{table}

\section{Lower bounds on the number of modes}\label{sec:lower-mode-bounds}
To avoid confusion with the modal-set notation $\Mode(\Phi)$, we continue to use the finite-modal-set counting notation $m(d,k)$ from Remark~\ref{rem:m-vs-Mode}. When a homoscedastic restriction is imposed, we write $m_{\mathrm{hom}}(d,k)$ for the corresponding supremum. The upper bounds above should be compared with the lower bound of \citet[Theorem~4.1]{AmendolaEngstromHaase2019}:
\[
L_{\mathrm{AEH}}(d,k)=\binom{k}{d}+k,
\]
and our subsequent improvements. The next proposition shows that lower-dimensional constructions may be lifted to higher dimension without losing modes.

\begin{proposition}[Monotonicity under dimension lifting]\label{prop:ambient-monotonicity}
For integers $1\le r\le d$ and $k\ge 1$,
\[
m(d,k)\ge m(r,k).
\]
The same statement holds in the homoscedastic class. Moreover, the dimension-lifting construction preserves finite modal sets and isolated modes.
\end{proposition}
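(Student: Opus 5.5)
The plan is to lift an $r$-dimensional mixture to $\R^d$ by tensoring each component with a common standard Gaussian in the extra $d-r$ coordinates. This is exactly the product structure already used in Theorem~\ref{thm:affine-rank} and Corollary~\ref{cor:affine-rank-consequence}.

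\emph{Construction.} Let $G(u)=\sum_{i=1}^k\alpha_i\phi_i(u)$ be a $k$-component Gaussian mixture on $\R^r$ with finite modal set. For $x=(u,v)\in\R^r\times\R^{d-r}$, define
\[
\Phi(u,v)=e^{-\|v\|^2/2}(2\pi)^{-(d-r)/2}G(u)=\sum_{i=1}^k\alpha_i\,\phi_i(u)\,\psi(v),
\]
where $\psi$ is the standard normal density on $\R^{d-r}$. Each summand is a Gaussian density on $\R^d$ with mean $(\mu_i,0)$ and block-diagonal covariance $\diag(\Sigma_i,I_{d-r})$. Hence $\Phi$ is a $k$-component Gaussian mixture density on $\R^d$ with the same weights. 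If all $\Sigma_i$ equal $\Sigma$, the lifted covariances all equal $\diag(\Sigma,I_{d-r})$, so the homoscedastic class is preserved. When $r=d$ there is nothing to do.

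\emph{Comparing critical points and modes.} The gradient computation from the proof of Corollary~\ref{cor:affine-rank-consequence} applies verbatim, with $C=(2\pi)^{-(d-r)/2}$. Since $G>0$, we have $\nabla\Phi(u,v)=0$ iff $v=0$ and $\nabla G(u)=0$, so $\Crit(\Phi)=\Crit(G)\times\{0\}$. To transfer modes, I would argue directly rather than through the Hessian, because modes of $G$ may be degenerate. Using $\Phi(u,v)=C e^{-\|v\|^2/2}G(u)\le C\,G(u)=\Phi(u,0)$, the following holds. If $u_\ast$ is a local maximum of $G$ on a ball $B$, then $\Phi\le\Phi(u_\ast,0)$ on $B\times\R^{d-r}$, so $(u_\ast,0)$ is a local maximum of $\Phi$. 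Conversely, if $(u,v)$ is a local maximum of $\Phi$, it is critical, so $v=0$. Restricting to the slice $v=0$ then shows that $u$ is a local maximum of $G$. Thus $\Mode(\Phi)=\Mode(G)\times\{0\}$ exactly. The same product-set identity shows that isolatedness of a mode is preserved in both directions and that finiteness of the modal set is preserved.

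\emph{Conclusion.} Take a sequence of $r$-dimensional mixtures whose finite mode counts approach $m(r,k)$, or attain it if the supremum is achieved. Each lift is an admissible $d$-dimensional mixture with finite modal set and the same number of modes, so $m(d,k)\ge m(r,k)$. The same argument gives $m_{\mathrm{hom}}(d,k)\ge m_{\mathrm{hom}}(r,k)$. No step is a serious obstacle. The only point needing care is the mode equivalence when modes of $G$ are degenerate, and the pointwise inequality $\Phi(u,v)\le\Phi(u,0)$ handles that without any Hessian argument.
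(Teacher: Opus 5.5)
Your proposal is correct and follows essentially the paper's proof. Both use the same product lift $G(u)\gamma(v)$ and establish the identity $\Mode(\widetilde\Phi)=\Mode(G)\times\{0\}$: the pointwise bound handles one inclusion and restriction to the slice $v=0$ handles the other. The only differences are cosmetic: you force $v=0$ through the gradient formula and criticality of modes, whereas the paper uses a direct ray argument $\gamma(tv_\ast)>\gamma(v_\ast)$; and you fix $\Theta=I$ where the paper allows any centered Gaussian.
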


\begin{proof}
See Appendix~\ref{subsec:lower-bound-proofs}.
\end{proof}

\begin{corollary}[Lifted lower-dimensional bound]\label{cor:lifted-lower}
For $d\ge 2$ and $k\ge 2$,
\[
m(d,k)\ge L_{\mathrm{bin}}(d,k)=k+\max_{2\le r\le \min(d,k)}\binom{k}{r}.
\]
In particular, for $k\ge 4$ and $d\ge \lfloor k/2\rfloor$,
\[
m(d,k)\ge k+\binom{k}{\lfloor k/2\rfloor}.
\]
\end{corollary}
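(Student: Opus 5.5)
The plan is to combine Proposition~\ref{prop:ambient-monotonicity} with the known lower bound of \citet[Theorem~4.1]{AmendolaEngstromHaase2019}, applied in each lower dimension $r$. Fix $2\le r\le\min(d,k)$.

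Since $r\ge2$ and $k\ge2$, the AEH bound in dimension $r$ gives a $k$-component Gaussian mixture in $\R^r$ with finite modal set and at least $\binom{k}{r}+k$ modes. In the notation of Remark~\ref{rem:m-vs-Mode}, this means
\[
m(r,k)\ge \binom{k}{r}+k .
\]
I would check that the AEH construction really has a finite modal set, which is needed to match the definition of $m$. If it only guarantees isolated modes, I would replace it by the tilted mixture of Lemma~\ref{lem:generic-tilt} and Remark~\ref{rem:generic-tilt-consequences}. That mixture has finitely many critical points, all nondegenerate, and at least as many modes.

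Next, Proposition~\ref{prop:ambient-monotonicity} with $r\le d$ gives $m(d,k)\ge m(r,k)$. Since the lifting preserves finite modal sets, no extra work is needed at this step.

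Taking the maximum over $r$ in the admissible range $2\le r\le\min(d,k)$, which is nonempty because $d,k\ge2$, yields
\[
m(d,k)\ge L_{\mathrm{bin}}(d,k).
\]

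For the ``in particular'' clause, assume $k\ge4$ and $d\ge\lfloor k/2\rfloor$. Put $r=\lfloor k/2\rfloor$. Then $r\ge2$ because $k\ge4$, and $r\le\min(d,k)$ because $r\le d$ by hypothesis and $r\le k$ trivially. So $r$ is admissible. Since $\binom{k}{\lfloor k/2\rfloor}$ is the largest binomial coefficient in its row, this $r$ attains the maximum, and the bound becomes $k+\binom{k}{\lfloor k/2\rfloor}$.

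The only delicate point is the first step: confirming that the AEH theorem's hypotheses hold for every $r$ in the range $2\le r\le k$, and that its witness has a finite modal set. Everything after that is bookkeeping.
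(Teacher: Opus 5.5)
Your proposal is correct and matches the paper's proof: the paper also takes the AEH construction in each dimension $2\le r\le\min(d,k)$, applies the tilt of Lemma~\ref{lem:generic-tilt} to its strict-boundary neighborhoods to obtain a finite modal set, lifts with Proposition~\ref{prop:ambient-monotonicity}, and maximizes over $r$. The only difference is that the paper uses the tilt unconditionally rather than as your fallback, so the finite-modal-set check you flag is settled by that step, and the second claim follows, as in your argument, from choosing $r=\lfloor k/2\rfloor$.
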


\begin{proof}
See Appendix~\ref{subsec:lower-bound-proofs}.
\end{proof}

Since the maximum in $L_{\mathrm{bin}}$ ranges over all $2\le r\le \min(d,k)$, the inequality
\[
L_{\mathrm{bin}}(d,k)\ge L_{\mathrm{AEH}}(d,k)
\]
is immediate, and the inequality is strict exactly when the maximum of $\binom{k}{r}$ over $2\le r\le \min(d,k)$ is larger than $\binom{k}{d}$, with $\binom{k}{d}=0$ if $d>k$.

A second lower-bound family comes from two operations. First, one may add a far-away Gaussian component and force at least one new mode, and second, one may take Cartesian products of Gaussian mixtures.

\begin{proposition}[Remote padding and Cartesian products]\label{prop:padding-products}
The following statements hold.
\begin{enumerate}[(a)]
    \item If a $k$-component Gaussian mixture in $\R^d$ has at least $M\ge1$ isolated modes, then for every integer $\ell\ge 0$ there exists a $(k+\ell)$-component Gaussian mixture in $\R^d$ with finite modal set and at least $M+\ell$ nondegenerate modes.
    \item If Gaussian mixtures with parameter pairs $(d_1,k_1)$ and $(d_2,k_2)$ have at least $M_1\ge1$ and $M_2\ge1$ isolated modes, respectively, then there exists a Gaussian mixture with parameter pair $(d_1+d_2,k_1k_2)$, finite modal set, and at least $M_1M_2$ nondegenerate modes.
\end{enumerate}
The same statements hold in the homoscedastic class.
\end{proposition}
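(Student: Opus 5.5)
For (a), I will argue by induction on $\ell$, so it suffices to add one component and gain one mode. Let $\Phi$ have $M$ isolated modes $p_1,\dots,p_M$. Since each $p_i$ is an isolated local maximum, it is a strict local maximum: otherwise a sequence of points with $\Phi=\Phi(p_i)$ would accumulate at $p_i$, and local maximality would make those points modes too. So I can choose pairwise disjoint compact neighborhoods $K_i$ with $\Phi(p_i)>\max_{\partial K_i}\Phi$, which puts us in the setting of Lemma~\ref{lem:generic-tilt}.

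Next, fix a unit vector $e$ and a large $R$, and form
\[
\Psi_R=(1-\varepsilon)\Phi+\varepsilon\,\phi(\cdot\,;Re,\Sigma_0).
\]
In the homoscedastic class I take $\Sigma_0$ equal to the common covariance. On $\bigcup_i K_i$ the added term and its derivatives tend to $0$ uniformly as $R\to\infty$. Hence the strict boundary inequalities survive for $(1-\varepsilon)\Phi+\varepsilon\phi_R$, and each $K_i$ still contains a local maximum (the maximum over $K_i$ is interior). Let $B$ be a fixed ball around $Re$. On $B$ the function $\Phi$ and its derivatives are $o(1)$ because of Gaussian tails, while $\varepsilon\phi_R$ has a strict maximum at $Re$ with boundary values bounded away from the center value. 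So $B$ also contains an interior maximum, and $B$ is disjoint from the $K_i$ once $R$ is large. This gives $M+1$ disjoint neighborhoods satisfying the hypotheses of Lemma~\ref{lem:generic-tilt}. By Remark~\ref{rem:generic-tilt-consequences}, applying a generic tilt and normalizing yields a $(k+1)$-component mixture with finite modal set and at least $M+1$ nondegenerate modes. Iterating $\ell$ times proves (a). Since all modes are nondegenerate at each stage, the induction hypothesis is maintained, and tilting preserves homoscedasticity.

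For (b), I will first apply Lemma~\ref{lem:generic-tilt} to each factor, so that $\Phi_1$ and $\Phi_2$ have finite modal sets and at least $M_1$ and $M_2$ nondegenerate modes, each isolated and strict. The product $\Phi(u,v)=\Phi_1(u)\Phi_2(v)$ is a density on $\R^{d_1+d_2}$. Expanding the product gives
\[
\Phi(u,v)=\sum_{i,j}\alpha_i\beta_j\,\phi_i(u)\psi_j(v),
\]
and each term $\phi_i(u)\psi_j(v)$ is a Gaussian with block-diagonal covariance. So $\Phi$ is a $k_1k_2$-component Gaussian mixture, and it is homoscedastic if both factors are. If $p$ is a strict local maximum of $\Phi_1$ and $q$ one of $\Phi_2$, then $(p,q)$ is a strict local maximum of $\Phi$, because both factors are positive. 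The Hessian at $(p,q)$ is block diagonal with blocks $\Phi_2(q)D^2\Phi_1(p)$ and $\Phi_1(p)D^2\Phi_2(q)$, which are both negative definite, so $(p,q)$ is a nondegenerate mode. This gives $M_1M_2$ nondegenerate modes.

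The product may have only finitely many critical points, but its modal set is the product of the factors' modal sets. To make finiteness and nondegeneracy airtight, I will apply Lemma~\ref{lem:generic-tilt} once more around the $M_1M_2$ product modes. This produces a mixture with all critical points nondegenerate, hence a finite modal set, and at least $M_1M_2$ nondegenerate modes.

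The main obstacle is the uniform estimate in (a): verifying that the far component does not destroy the old boundary inequalities and that $\Phi$ does not destroy the new bump. This reduces to Gaussian tail decay of $\phi_R$ on the compact set $\bigcup_i K_i$ and of $\Phi$ on $B$, both uniform as $R\to\infty$.
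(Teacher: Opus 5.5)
Your proposal is correct and follows essentially the paper's route. In (a) you add a remote, low-weight component so the old witness neighborhoods keep their strict boundary inequalities and a new one appears around the far center, then apply Lemma~\ref{lem:generic-tilt} and induct on $\ell$; in (b) you form the block-diagonal product mixture and apply the tilt lemma to the strict product maxima. Your preliminary tilt of the factors in (b) already makes the product Morse, since its critical set is $\Crit(\Phi_1)\times\Crit(\Phi_2)$ with block-diagonal nonsingular Hessians, so your final tilt is redundant but harmless; the paper skips the pre-tilt and instead proves $\Mode(H)=\Mode(F)\times\Mode(G)$ directly.
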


\begin{proof}
See Appendix~\ref{subsec:lower-bound-proofs}.
\end{proof}

\begin{theorem}[Padding-product lower bound]\label{thm:padding-product-lower}
Let $d\ge 1$ and $k\ge 2$. For every integer $s$ with
\[
1\le s\le \min\bigl(d,\lfloor \log_2 k\rfloor\bigr),
\]
let $\lfloor d/s\rfloor$ and $d\bmod s$ be the quotient and remainder on division of $d$ by $s$. Then
\[
m(d,k)\ge k-2^s+\left(\left\lfloor\frac{d}{s}\right\rfloor+1\right)^{s-(d\bmod s)}\left(\left\lfloor\frac{d}{s}\right\rfloor+2\right)^{d\bmod s}.
\]
\end{theorem}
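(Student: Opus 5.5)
The plan is to build the required mixture from one-dimensional seeds by Cartesian products, and then add remote components, using Proposition~\ref{prop:padding-products}. Write $d=sa+b$ with $a=\lfloor d/s\rfloor\ge1$ and $b=d\bmod s$. Split the $d$ coordinates into $s$ blocks: $s-b$ blocks of dimension $a$ and $b$ blocks of dimension $a+1$. On each block of dimension $e$ we need a two-component Gaussian mixture in $\R^e$ with $e+1$ isolated modes. Ray and Ren's sharp bound states exactly this, but the paper only quotes it, so a self-contained route is preferable. One candidate is the $d+1$ lower bound for $k=2$ built into $L_{\mathrm{bin}}$/$L_{\mathrm{AEH}}$. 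However, $\binom{2}{e}+2$ gives only $3$ modes when $e=2$, so that route is insufficient.

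The route I would actually use is the seed family implicit in the abstract's linear bound $d+k-1$. For $k=2$ this says $m(e,2)\ge e+1$, which is the same as Ray--Ren. Since this paper cites \citet{RayRen2012} for the existence of two-component mixtures in $\R^e$ with $e+1$ modes, I would invoke that construction directly as the seed. I would first check, or perturb by Lemma~\ref{lem:generic-tilt}, that the modes are isolated.

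Step one takes iterated Cartesian products via Proposition~\ref{prop:padding-products}(b). Multiplying $s$ two-component seeds of dimensions $e_1,\dots,e_s$, with $\sum e_j=d$, gives a mixture in $\R^d$ with $2^s$ components, a finite modal set, and at least $\prod_j(e_j+1)$ nondegenerate modes. With the block sizes above this product is
\[
(a+1)^{s-b}(a+2)^{b}.
\]
The hypothesis $s\le d$ ensures every block has dimension $\ge1$. At each stage of the induction the output modes are nondegenerate, hence isolated, so the proposition can be reapplied.

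Step two pads the mixture. Since $s\le\lfloor\log_2 k\rfloor$, we have $2^s\le k$. Applying Proposition~\ref{prop:padding-products}(a) with $\ell=k-2^s\ge0$ produces a $k$-component mixture in $\R^d$ with finite modal set and at least $k-2^s+(a+1)^{s-b}(a+2)^{b}$ modes. Taking the supremum $m(d,k)$ then gives the claim.

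I expect the main obstacle to be the one-dimensional-per-block seeds: certifying a two-component mixture in $\R^e$ with $e+1$ isolated modes using only results available in the paper. If citing Ray--Ren is not acceptable, I would build the seed directly. Take two Gaussians with means $\pm\mu e_1$ and very anisotropic covariances, arranged so that their ridgeline crosses $e-1$ orthogonal "ridges." An alternative I would try first is a product-free argument within each block that reduces the count to a one-dimensional ridgeline function with $e+1$ maxima. In either case, Lemma~\ref{lem:generic-tilt} would then be used to make all modes nondegenerate.
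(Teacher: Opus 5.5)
Your proposal is correct and is essentially the paper's proof: Ray--Ren two-component seeds in dimensions $\lfloor d/s\rfloor$ and $\lfloor d/s\rfloor+1$, whose finitely many modes are isolated, are combined by iterated Cartesian products via Proposition~\ref{prop:padding-products}(b), and the result is padded with $\ell=k-2^s$ via part~(a). One caution about your aside: the linear bound $d+k-1$ is not an independent source of seeds, because Corollary~\ref{cor:linear-padding-lower} is derived from this very theorem at $s=1$, which itself rests on Ray--Ren; citing Ray--Ren directly, as you ultimately do, is the right move.
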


\begin{proof}
See Appendix~\ref{subsec:lower-bound-proofs}.
\end{proof}

\begin{corollary}[Padding-product family]\label{cor:padding-product-family}
For $d\ge1$ and $k\ge2$,
\[
m(d,k)\ge L_{\mathrm{pp}}(d,k)=
\max_{1\le s\le \min(d,\lfloor \log_2 k\rfloor)}
\left(k-2^s+\left(\left\lfloor\frac{d}{s}\right\rfloor+1\right)^{s-(d\bmod s)}\left(\left\lfloor\frac{d}{s}\right\rfloor+2\right)^{d\bmod s}\right).
\]
\end{corollary}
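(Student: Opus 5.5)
The corollary is a direct repackaging of Theorem~\ref{thm:padding-product-lower}. The plan is to check that the index range is nonempty and then take the maximum over admissible $s$.

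First, for $d\ge1$ and $k\ge2$ we have $\lfloor\log_2 k\rfloor\ge1$. Hence $\min(d,\lfloor\log_2 k\rfloor)\ge1$, so the set of integers $s$ with $1\le s\le\min(d,\lfloor\log_2 k\rfloor)$ is nonempty and finite, and the maximum defining $L_{\mathrm{pp}}(d,k)$ is well defined.

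Second, for each such $s$, Theorem~\ref{thm:padding-product-lower} gives
\[
m(d,k)\ge k-2^s+\left(\left\lfloor\frac{d}{s}\right\rfloor+1\right)^{s-(d\bmod s)}\left(\left\lfloor\frac{d}{s}\right\rfloor+2\right)^{d\bmod s}.
\]
Since $m(d,k)$ does not depend on $s$, it is at least the maximum of these finitely many right-hand sides, which is exactly $L_{\mathrm{pp}}(d,k)$.

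There is no genuine obstacle: all the substance lies in Theorem~\ref{thm:padding-product-lower}, which rests on the product and padding constructions of Proposition~\ref{prop:padding-products}. The only point needing care is the nonemptiness of the index range, which is what uses $k\ge2$.

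For the advertised consequence $d+k-1$, take $s=1$. Then $\lfloor d/1\rfloor=d$ and $d\bmod1=0$, so the bound reads $k-2+(d+1)=d+k-1$, giving $L_{\mathrm{pp}}(d,k)\ge d+k-1$.
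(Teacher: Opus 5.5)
Your proposal is correct and is the paper's argument: the paper also obtains the corollary by taking the maximum over the admissible $s$ in Theorem~\ref{thm:padding-product-lower}. Your check that the index range is nonempty (using $k\ge2$) is a small point the paper leaves implicit, and your $s=1$ computation matches Corollary~\ref{cor:linear-padding-lower}.
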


\begin{proof}
Taking the maximum over the admissible integers $s$ in Theorem~\ref{thm:padding-product-lower} gives the definition of $L_{\mathrm{pp}}(d,k)$ and the stated inequality.
\end{proof}

\begin{corollary}[Linear padding-product lower bound]\label{cor:linear-padding-lower}
For $d\ge1$ and $k\ge2$,
\[
L_{\mathrm{pp}}(d,k)\ge d+k-1.
\]
In particular, $m(d,k)\ge d+k-1$.
\end{corollary}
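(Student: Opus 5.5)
Since $L_{\mathrm{pp}}(d,k)$ is a maximum over the admissible $s$, I only need to show that one admissible value of $s$ already gives at least $d+k-1$. I will take $s=1$.

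\textbf{Admissibility of $s=1$.} The constraint is $1\le s\le \min(d,\lfloor\log_2 k\rfloor)$. Here $d\ge1$ by hypothesis. Also $k\ge2$ gives $\log_2 k\ge1$, so $\lfloor\log_2 k\rfloor\ge1$. Hence $s=1$ lies in the index range, and that range is nonempty.

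\textbf{Evaluating the term at $s=1$.} With $s=1$ we have $\lfloor d/1\rfloor=d$ and $d\bmod 1=0$. The exponents are therefore $s-(d\bmod s)=1$ and $d\bmod s=0$. The term becomes
\[
k-2^1+(d+1)^{1}(d+2)^{0}=k-2+d+1=d+k-1.
\]
It follows that $L_{\mathrm{pp}}(d,k)\ge d+k-1$.

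\textbf{Consequence for $m(d,k)$.} Corollary~\ref{cor:padding-product-family} gives $m(d,k)\ge L_{\mathrm{pp}}(d,k)$, and combining this with the previous display yields $m(d,k)\ge d+k-1$.

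There is no real obstacle here. The only points needing care are the admissibility of $s=1$ and the convention $(d+2)^0=1$.

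As a sanity check, the term at $s=1$ matches the underlying construction behind Theorem~\ref{thm:padding-product-lower}. One starts from a two-component mixture in $\R^d$ with $d+1$ modes, as in \citet{RayRen2012}, and pads it with $k-2$ remote components using Proposition~\ref{prop:padding-products}(a). This yields $d+1+(k-2)=d+k-1$ modes.
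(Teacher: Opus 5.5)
Your proof is correct and follows the paper's argument: take $s=1$, evaluate the corresponding term as $k-2+(d+1)=d+k-1$, and then invoke Corollary~\ref{cor:padding-product-family}. Your explicit check that $s=1$ is admissible, using $d\ge1$ and $\lfloor\log_2 k\rfloor\ge1$ for $k\ge2$, is a detail the paper leaves implicit.
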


\begin{proof}
In the definition of $L_{\mathrm{pp}}(d,k)$, choose $s=1$. Then $\lfloor d/1\rfloor=d$ and $d\bmod 1=0$, so the corresponding value is $k-2+(d+1)=d+k-1$. The lower bound for $m(d,k)$ follows from Corollary~\ref{cor:padding-product-family}.
\end{proof}

\begin{corollary}\label{cor:best-lower}
For all $d\ge 2$ and $k\ge 2$,
\[
m(d,k)\ge L_{\mathrm{best}}(d,k)=\max\bigl\{L_{\mathrm{bin}}(d,k),L_{\mathrm{pp}}(d,k)\bigr\}.
\]
\end{corollary}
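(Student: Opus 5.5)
This corollary follows by combining the two lower-bound families already established. For $d\ge2$ and $k\ge2$, Corollary~\ref{cor:lifted-lower} gives $m(d,k)\ge L_{\mathrm{bin}}(d,k)$. That statement requires exactly $d,k\ge2$, so the index range $2\le r\le\min(d,k)$ is nonempty and $L_{\mathrm{bin}}$ is well defined. Corollary~\ref{cor:padding-product-family} gives $m(d,k)\ge L_{\mathrm{pp}}(d,k)$ for all $d\ge1$ and $k\ge2$, so it applies in particular when $d\ge2$. Here the range $1\le s\le\min(d,\lfloor\log_2 k\rfloor)$ is nonempty because $k\ge2$ forces $\lfloor\log_2 k\rfloor\ge1$.

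Since $m(d,k)$ is a single supremum over $k$-component mixtures in $\R^d$ with finite modal set, it dominates both quantities simultaneously. Therefore $m(d,k)\ge\max\{L_{\mathrm{bin}}(d,k),L_{\mathrm{pp}}(d,k)\}=L_{\mathrm{best}}(d,k)$.

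The only point to verify is that both earlier results bound the same quantity $m(d,k)$ from Remark~\ref{rem:m-vs-Mode}. In particular, the padding-product constructions produce mixtures with finite modal sets, as Proposition~\ref{prop:padding-products} guarantees, so they are admissible competitors in the supremum. Given that, there is no real obstacle.
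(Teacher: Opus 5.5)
Your proposal is correct and matches the paper's proof, which simply takes the maximum of the bounds from Corollary~\ref{cor:lifted-lower} and Corollary~\ref{cor:padding-product-family}. Your checks that both index ranges are nonempty and that both constructions are admissible competitors for the same supremum $m(d,k)$ are accurate additions.
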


\begin{proof}
This is immediate from Corollaries~\ref{cor:lifted-lower} and \ref{cor:padding-product-family}.
\end{proof}

\begin{corollary}[Eventual dominance of the padding-product family]\label{cor:pp-eventual-dominance}
For every fixed $k\ge 4$, the family $L_{\mathrm{pp}}(d,k)$ eventually dominates $L_{\mathrm{bin}}(d,k)$ as $d\to\infty$.
\end{corollary}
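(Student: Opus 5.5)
For fixed $k\ge4$ we need $L_{\mathrm{pp}}(d,k)>L_{\mathrm{bin}}(d,k)$ for all sufficiently large $d$. The plan is to notice that $L_{\mathrm{bin}}$ is eventually constant in $d$, while already one admissible term of $L_{\mathrm{pp}}$ grows without bound.

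\emph{Step 1: $L_{\mathrm{bin}}$ is bounded in $d$.} Once $d\ge k$, we have $\min(d,k)=k$, so the maximum in $L_{\mathrm{bin}}$ no longer depends on $d$. Hence for all $d\ge k$,
\[
L_{\mathrm{bin}}(d,k)=k+\max_{2\le r\le k}\binom{k}{r}=k+\binom{k}{\lfloor k/2\rfloor}.
\]
Here $\lfloor k/2\rfloor\ge2$ because $k\ge4$. Call this constant $B_k$; in fact $B_k\le k+2^k$.

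\emph{Step 2: $L_{\mathrm{pp}}$ grows at least linearly in $d$.} Take $s=1$. This is admissible for every $d\ge1$ because $\lfloor\log_2 k\rfloor\ge2\ge1$. Corollary~\ref{cor:linear-padding-lower} then gives
\[
L_{\mathrm{pp}}(d,k)\ge d+k-1.
\]

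\emph{Step 3: compare.} For $d\ge\max\{k,\;B_k-k+2\}$ we get
\[
L_{\mathrm{pp}}(d,k)\ge d+k-1>B_k=L_{\mathrm{bin}}(d,k),
\]
so strict dominance holds for all such $d$. An explicit sufficient threshold is $d\ge\binom{k}{\lfloor k/2\rfloor}+2$. This threshold also exceeds $k$ when $k\ge4$, since $\binom{k}{\lfloor k/2\rfloor}\ge k$ there.

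If a stronger growth statement is wanted, I would instead take $s=2$, which is admissible since $k\ge4$ gives $\lfloor\log_2 k\rfloor\ge2$. That term is at least $k-4+(\lfloor d/2\rfloor+1)^2$, which is quadratic in $d$. This is not needed for the corollary as stated.

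No step is a real obstacle. The only point needing care is Step 1: one must check that $L_{\mathrm{bin}}$ really stabilizes once $d\ge k$, using the convention that $\binom{k}{r}$ is only maximized over $r\le\min(d,k)$. The hypothesis $k\ge4$ enters only to guarantee that the relevant index ranges are nonempty and that $s=2$ is admissible if the quadratic strengthening is used.
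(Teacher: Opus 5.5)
Your proof is correct and shares the paper's skeleton: $L_{\mathrm{bin}}(d,k)$ is constant in $d$ for large $d$, while one admissible padding-product term is unbounded. The difference is the choice of $s$.

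The paper takes $s_0=\lfloor\log_2 k\rfloor$, which is at least $2$ exactly because $k\ge4$. It uses $L_{\mathrm{pp}}(d,k)\ge k-2^{s_0}+(d/s_0)^{s_0}$, i.e.\ growth of order $d^{s_0}$, and it notes that $L_{\mathrm{bin}}$ already stabilizes for $d\ge\lfloor k/2\rfloor$. You take $s=1$ and settle for linear growth $d+k-1$, which is enough.

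Your route gives an explicit threshold $d\ge\binom{k}{\lfloor k/2\rfloor}+2$. It also shows that the hypothesis $k\ge4$ is superfluous. For every $k\ge2$ and $d\ge k$, $L_{\mathrm{bin}}(d,k)=k+\max_{2\le r\le k}\binom{k}{r}$ is constant, and $d+k-1$ eventually beats it. For $k=2,3$ this gives exactly the crossovers $d=3$ and $d=5$ listed in the paper's table. Accordingly, your closing remark misplaces the role of $k\ge4$. Nonemptiness of the index ranges needs only $d,k\ge2$, and in Step~1 the hypothesis merely lets you name the maximum as the central binomial coefficient.

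The paper's route, in turn, yields a much earlier crossover. It needs roughly $d> s_0\bigl(2^{s_0}+\binom{k}{\lfloor k/2\rfloor}\bigr)^{1/s_0}$ rather than a threshold linear in $\binom{k}{\lfloor k/2\rfloor}$, which is much closer to the tabulated values of $d_{\mathrm{pp}}(k)$.
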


\begin{proof}
Let $s_0=\lfloor \log_2 k\rfloor$. Then $s_0\ge 2$, and the definition of $L_{\mathrm{pp}}$ gives
\[
L_{\mathrm{pp}}(d,k)\ge k-2^{s_0}+\left(\frac{d}{s_0}\right)^{s_0}
\]
for all $d\ge s_0$. On the other hand, if $d\ge \lfloor k/2\rfloor$, then
\[
L_{\mathrm{bin}}(d,k)=k+\binom{k}{\lfloor k/2\rfloor},
\]
which is constant in $d$. Since $s_0\ge 2$, the displayed lower bound for $L_{\mathrm{pp}}(d,k)$ grows at least quadratically in $d$, and therefore eventually exceeds $L_{\mathrm{bin}}(d,k)$.
\end{proof}

\subsection{Seed closures}\label{subsec:seed-closures}

The preceding padding-product construction is a special case of a general closure principle. This formulation is useful because any new seed construction can be inserted without changing the proof.

\begin{definition}[Seed triple]
A seed triple is a triple of positive integers $(d_0,k_0,M_0)$ for which there exists a $k_0$-component Gaussian mixture in $\R^{d_0}$ with finite modal set and at least $M_0$ modes. A homoscedastic seed triple is defined in the same way with the additional requirement that the seed mixture is homoscedastic.
\end{definition}

\begin{theorem}[Product closure of seed constructions]\label{thm:seed-closure}
Let $\mathcal S$ be any collection of seed triples. Define
\[
L_{\mathcal S}(d,k)=
\max\left\{
 k-K+\prod_{j=1}^t M_j:
 t\ge0,\ 
 (d_j,k_j,M_j)\in\mathcal S,\ 
 \sum_{j=1}^t d_j\le d,\ 
 K=\prod_{j=1}^t k_j\le k
\right\},
\]
with empty products equal to $1$, empty sums equal to $0$, and inadmissible choices ignored. Then
\[
m(d,k)\ge L_{\mathcal S}(d,k).
\]
The same statement holds for $m_{\mathrm{hom}}(d,k)$ when $\mathcal S$ is a collection of homoscedastic seed triples.
\end{theorem}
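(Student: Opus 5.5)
The plan is to show that every admissible choice in the definition of $L_{\mathcal S}(d,k)$ is realized by an explicit mixture. Fix $t\ge0$ and seed triples $(d_j,k_j,M_j)\in\mathcal S$ with $D=\sum_j d_j\le d$ and $K=\prod_j k_j\le k$. We must produce a $k$-component Gaussian mixture in $\R^d$ with finite modal set and at least $k-K+\prod_j M_j$ modes. The value is always at least $1$ when $t=0$, since then it equals $k$ (empty choice). If the set of admissible values is nonempty the maximum is attained by some choice, so it suffices to treat one choice. The homoscedastic version follows by the same steps, because Propositions~\ref{prop:ambient-monotonicity} and~\ref{prop:padding-products} hold verbatim in the homoscedastic class.

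\textbf{Step 1 (products).} If $t=0$, take $K=1$, $D=0$ and skip to Step 3, starting instead from a single Gaussian in $\R^1$ (or $\R^d$) with one nondegenerate mode. Otherwise, each seed mixture has finite modal set and at least $M_j\ge1$ modes. Finite modal sets consist of isolated modes, so Proposition~\ref{prop:padding-products}(b) applies. Inducting on $t$, apply it to the first $j-1$ factors and the $j$th seed. This gives a mixture with parameters $(d_1+\cdots+d_j,\,k_1\cdots k_j)$, finite modal set, and at least $M_1\cdots M_j$ nondegenerate (hence isolated) modes. The induction hypothesis of part (b) is therefore maintained. The result is a $K$-component mixture in $\R^{D}$ with at least $P=\prod_jM_j$ isolated modes and finite modal set.

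\textbf{Step 2 (lifting).} Apply Proposition~\ref{prop:ambient-monotonicity}, in its constructive form, to pass from $\R^D$ to $\R^d$. This keeps $K$ components, a finite modal set, and isolated modes, and the mode count does not decrease. Here I need the construction itself rather than just the inequality $m(d,K)\ge m(D,K)$, and the proposition explicitly states that lifting preserves finite modal sets and isolated modes. I would invoke the product-with-a-standard-Gaussian construction, as in Corollary~\ref{cor:affine-rank-consequence}.

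\textbf{Step 3 (padding).} Apply Proposition~\ref{prop:padding-products}(a) with $\ell=k-K\ge0$. This yields a $k$-component mixture in $\R^d$ with finite modal set and at least $P+k-K$ nondegenerate modes, so $m(d,k)\ge k-K+P$. Maximizing over admissible choices gives the theorem.

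The main obstacle is bookkeeping of hypotheses rather than any analysis. Each proposition demands isolated modes (or $M\ge1$) and returns nondegenerate modes with finite modal set, so the outputs must feed the next step's inputs. The step needing the most care is Step 2: $m(d,K)\ge m(D,K)$ as a statement about suprema is not directly a construction. I would therefore use the proof of Proposition~\ref{prop:ambient-monotonicity} from the appendix, whose stated preservation of finite modal sets and isolated modes is exactly what we need. A minor point is the case $D=0$ when $t=0$, which is handled by starting from a single Gaussian in $\R^d$ and padding directly.
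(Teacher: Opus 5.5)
Your proposal is correct and follows essentially the same route as the paper. The empty list is handled by padding a single Gaussian in $\R^d$; otherwise you take iterated Cartesian products of the seed witnesses via Proposition~\ref{prop:padding-products}(b), lift with the construction from the proof of Proposition~\ref{prop:ambient-monotonicity}, and pad with $\ell=k-K$ via Proposition~\ref{prop:padding-products}(a), and the paper does the same. One small slip: the parenthetical ``single Gaussian in $\R^1$'' for $t=0$ would need lifting, since you skip Step~2 there, but you already fix this by starting in $\R^d$ at the end.
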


\begin{proof}
Choose an admissible finite list of seed triples $(d_j,k_j,M_j)$. If the list is empty, start from any one-component Gaussian in $\R^d$ and use Proposition~\ref{prop:padding-products}(a) with $\ell=k-1$, obtaining a $k$-component mixture with finite modal set and at least $k$ modes; this is exactly the empty-list value $k-1+1$. Thus assume the list is nonempty. For each seed triple choose a witnessing mixture with finite modal set and at least $M_j$ modes; these modes are isolated. Repeatedly applying Proposition~\ref{prop:padding-products}(b) gives a mixture with parameter pair
\[
\left(\sum_{j=1}^t d_j,\prod_{j=1}^t k_j\right)
\]
with finite modal set and at least $\prod_{j=1}^t M_j$ modes.
If $\sum_jd_j<d$, apply Proposition~\ref{prop:ambient-monotonicity}, whose construction preserves finite modal sets and isolated modes. If $K=\prod_jk_j<k$, apply Proposition~\ref{prop:padding-products}(a) with $\ell=k-K$. The resulting $k$-component mixture in $\R^d$ has finite modal set and at least
\[
\prod_{j=1}^tM_j+(k-K)
\]
modes. Taking the maximum over admissible lists proves the theorem. The homoscedastic proof is identical, using the homoscedastic parts of Propositions~\ref{prop:ambient-monotonicity} and \ref{prop:padding-products}.
\end{proof}

The two-component construction of \citet{RayRen2012} gives seed triples $(s,2,s+1)$ for every $s\ge1$, and Theorem~\ref{thm:padding-product-lower} is recovered by applying Theorem~\ref{thm:seed-closure} to this family. The next theorem gives a simple homoscedastic seed family, closely related to the regular-simplex constructions discussed in \citet{CarreiraPerpinanWilliams2003}.

\begin{theorem}\label{thm:simplex-seed}
For every integer $K\ge3$, $(K-1,K,K+1)$ is a homoscedastic seed triple.
\end{theorem}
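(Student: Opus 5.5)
The plan is to build one explicit symmetric homoscedastic configuration: $K$ components in $\R^{K-1}$ with at least $K+1$ strict local maxima. One maximum will sit at the centroid and one near each mean. Finiteness of the modal set will then come for free from exponential tilting. Concretely, I take $\mu_1,\dots,\mu_K$ to be the vertices of a regular simplex in $\R^{K-1}$ centred at the origin with circumradius $R$. I take equal weights $\alpha_i=1/K$ and common covariance $\Sigma=I$. Using $\sum_i\mu_i=0$ and the tight-frame identity $\sum_{i=1}^K\mu_i\mu_i^\top=\frac{KR^2}{K-1}I_{K-1}$, I expect to have to tune the single parameter $t=R^2/(K-1)$.

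\emph{Step 1 (the centroid).} By the permutation symmetry, $\nabla\Phi(0)=0$. All $\phi_i(0)$ are equal, so
\[
D^2\Phi(0)=\frac{\phi_1(0)}{K}\sum_{i=1}^K\bigl(\mu_i\mu_i^\top-I\bigr)=\phi_1(0)\bigl(t-1\bigr)I .
\]
For $t<1$ this is negative definite, so $0$ is a nondegenerate strict local maximum. I can therefore take a small closed ball $K_0$ around $0$ with $\Phi(0)>\max_{\partial K_0}\Phi$.

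\emph{Step 2 (near the vertices).} For each $i$, I want a compact neighbourhood $K_i$ of $\mu_i$ that is disjoint from $K_0$ and from the other $K_j$, with an interior point $p_i$ such that $\Phi(p_i)>\max_{\partial K_i}\Phi$. Then $\Phi$ attains its maximum over $K_i$ in the interior, which gives a local maximum in each $K_i$. My first attempt is $K_i=\{x:\|x-\mu_i\|\le \delta R\}$ with $p_i=\mu_i$. I compare $\Phi(\mu_i)$, which is at least $\frac1K(2\pi)^{-(K-1)/2}$ plus the positive contributions of the other components, with an upper bound on $\partial K_i$. The vertices are pairwise at distance $R\sqrt{2K/(K-1)}=\sqrt{2Kt}$. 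On $\partial K_i$ the own component contributes $e^{-\delta^2R^2/2}$ and each other component at most $e^{-(\sqrt{2Kt}-\delta R)^2/2}$. So the required inequality becomes an explicit scalar inequality in $(K,t,\delta)$, which I would verify for $t$ slightly below $1$.

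\emph{Main obstacle.} Step 2 is the step I expect to be hardest, because Step 1 forces $t<1$. For large $K$ the vertices are at distance about $\sqrt{2K}$, which is well separated, and the crude ball estimate should succeed easily. For $K=3,4$ the margin is thin: for $K=3$ the spacing is just under $\sqrt6\approx2.45$, against the critical value $2$ for two unit Gaussians. There the naive ball may fail. If it does, I would exploit symmetry instead. Write $\mu_i=R e_i$ with $\|e_i\|=1$. The stabiliser of the vertex $\mu_i$ in the symmetry group fixes the ray $\{s e_i: s\ge 0\}$ through $\mu_i$ and acts irreducibly on its orthogonal complement. Hence $\nabla\Phi$ is parallel to that ray at points of the ray, and the restriction $f(s)=\Phi(s e_i)$ has critical points that are critical points of $\Phi$. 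I would show three things about $f$. First, $f$ has a local maximum at $0$, which is Step 1. Second, $f$ decreases and then increases again before $s=R$, by checking $f'(s)>0$ at some intermediate $s$. Third, $f\to0$. This yields a ray critical point $p_i$ near $\mu_i$. The Hessian there is then controlled in two blocks: the radial direction is negative from the one-dimensional picture, and the transverse block is a scalar multiple of the identity, whose sign I would compute directly. The conclusion is that $p_i$ is a nondegenerate local maximum, and I can take $K_i$ to be a small ball around it. Small $K$ could also be confirmed by direct evaluation if necessary.

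\emph{Step 3 (finite modal set).} With the $K+1$ disjoint neighbourhoods $K_0,\dots,K_K$ in hand, I apply Lemma~\ref{lem:generic-tilt}. It gives a small tilt $c$ for which $H_c$ is a positive multiple of a $K$-component homoscedastic Gaussian mixture in $\R^{K-1}$ whose critical points are all nondegenerate, and which has a local maximum in each $K_j$. By Remark~\ref{rem:generic-tilt-consequences}, the normalised tilted mixture has finite modal set and at least $K+1$ modes. Hence $(K-1,K,K+1)$ is a homoscedastic seed triple.
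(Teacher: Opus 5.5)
Your proposal is correct. It reaches the off-centre modes by a partly different route from the paper.

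\textbf{What the paper does.} The paper uses the same configuration, with $n=K-1$; your $t<1$ is its $\tau>1/n$ after the rescaling $x\mapsto x/R$. It treats every $K$ by the ray argument:
\begin{enumerate}[(i)]
\item The ray derivative has the sign of $\ell_a(u)=\log(1-u)+au-\log(1+nu)$.
\item The threshold expansion $\ell_K(u)=\frac{n^2-1}{2}u^2+O(u^3)$ gives a point of positive ray derivative for every $\tau$ just above $1/n$.
\item The right endpoint $u_\ast$ of a positivity component of $\ell_a$ is a strict one-dimensional maximum, by analyticity.
\item The transverse form $-\frac{1+nu_\ast}{1-u_\ast}+\frac{K}{n\tau}-n$ is negative.
\end{enumerate}
These are then combined using the fact that the transverse gradient vanishes along the ray.

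\textbf{What your route buys and where it stops.} Your vertex-ball comparison with $p_i=\mu_i$ needs no off-centre critical point and no Hessian computation away from the centroid. It does go through for $K\ge5$ with $t$ slightly below $1$. For example, radius $0.8$ gives $e^{-0.32}+4e^{-(\sqrt{10}-0.8)^2/2}\approx0.97<1$ for $K=5$ at $t=1$, hence also for $t$ slightly below $1$, and the left side decreases in $K$.

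For $K=3,4$ the fallback is not optional. With $t$ near $1$, the crude triangle-inequality estimate fails for every radius. For $K=3$, no ball centred at $\mu_i$ with $p_i=\mu_i$ can work at all. Indeed $\Phi(0)>\Phi(\mu_i)$, and $\Phi$ exceeds $\Phi(\mu_i)$ along the whole segment from $\mu_i$ to the centroid; the ray mode sits at about $0.7R$.

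Your fallback is then precisely the paper's argument, with two differences in how the ingredients are obtained:
\begin{enumerate}[(i)]
\item Positivity of $f'$ is checked by hand rather than via the Taylor coefficient. At $t=0.99$, one has $f'(R/2)>0$ for $K=3$ and $f'(0.85R)>0$ for $K=4$.
\item The transverse block is a positive multiple of $-\frac{R+ns}{R-s}+Kt-n$. This is negative at any ray critical point $s\in(0,R)$, because $Kt-n<1<\frac{R+ns}{R-s}$.
\end{enumerate}
So the case split gives a computation-light argument for $K\ge5$, at the cost of a numerical inequality and of still needing the paper's method for small $K$. The paper's proof is uniform in $K$ and explains why the regime $t\to1^-$ is the right one.

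\textbf{One correction.} A one-dimensional local maximum only gives $f''(p_i)\le0$, so the one-dimensional picture does not justify calling $p_i$ nondegenerate. You do not need nondegeneracy. Three facts already give a strict local maximum, which is all Lemma~\ref{lem:generic-tilt} requires:
\begin{enumerate}[(i)]
\item the ray maximum is strict, by real analyticity of $f$;
\item the transverse gradient vanishes along the ray, by symmetry;
\item the transverse block is negative definite at $p_i$, hence on a neighbourhood.
\end{enumerate}
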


\begin{proof}
See Appendix~\ref{subsec:simplex-seed-proof}.
\end{proof}

\begin{corollary}\label{cor:hom-simplex-product}
In the homoscedastic setting,
\[
m_{\mathrm{hom}}(d,k)\ge
\max_{\substack{s\ge0,\ K_1,\dots,K_s\ge3\\
\sum_{j=1}^sK_j-s\le d,\ \prod_{j=1}^sK_j\le k}}
\left(k-\prod_{j=1}^sK_j+\prod_{j=1}^s(K_j+1)\right),
\]
where empty products are interpreted as $1$. In particular,
\[
m_{\mathrm{hom}}(d,k)\ge k+1
\qquad(k\ge3,\ d\ge k-1).
\]
\end{corollary}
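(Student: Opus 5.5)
The plan is to apply Theorem~\ref{thm:seed-closure} in its homoscedastic form, with the seed collection
\[
\mathcal S=\{(K-1,K,K+1):K\ge3\},
\]
which consists of homoscedastic seed triples by Theorem~\ref{thm:simplex-seed}. Any admissible list in the definition of $L_{\mathcal S}(d,k)$ has the form $(K_j-1,K_j,K_j+1)$ for $j=1,\dots,s$. For such a list the admissibility constraints read
\[
\sum_{j=1}^s(K_j-1)=\sum_{j=1}^sK_j-s\le d,
\qquad
\prod_{j=1}^sK_j\le k.
\]
The corresponding value is $k-\prod_jK_j+\prod_j(K_j+1)$, and the empty list ($s=0$) contributes $k-1+1=k$. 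Hence the displayed maximum is exactly $L_{\mathcal S}(d,k)$. Theorem~\ref{thm:seed-closure} then gives $m_{\mathrm{hom}}(d,k)\ge L_{\mathcal S}(d,k)$, which is the first claim.

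For the ``in particular'' part, take $k\ge3$ and $d\ge k-1$, and choose $s=1$ with $K_1=k$. Then $K_1-1=k-1\le d$ and $K_1=k\le k$, so the choice is admissible. Its value is $k-k+(k+1)=k+1$.

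The only care needed is bookkeeping. First, the empty-list and empty-product conventions must match those of Theorem~\ref{thm:seed-closure}, which they do. Second, the maximum is taken over a finite set: since each $K_j\ge3$, the constraint $\prod_j K_j\le k$ bounds both $s$ and the $K_j$, so the maximum is well defined. There is no substantive obstacle, because all the analytic content lives in Theorems~\ref{thm:seed-closure} and \ref{thm:simplex-seed}.
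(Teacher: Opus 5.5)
Your proposal is correct and follows the paper's proof: apply Theorem~\ref{thm:seed-closure} in its homoscedastic form to the seed family $(K-1,K,K+1)$, $K\ge3$, from Theorem~\ref{thm:simplex-seed}, identify the displayed maximum with $L_{\mathcal S}(d,k)$, and take the single seed $K_1=k$ for the particular case. Your added checks (that the empty list gives $k$, and that the maximum ranges over a finite set) are correct and only make explicit what the paper leaves implicit.
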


\begin{proof}
Apply Theorem~\ref{thm:seed-closure} to the homoscedastic seed family $(K-1,K,K+1)$ from Theorem~\ref{thm:simplex-seed}. The displayed maximum is exactly the resulting seed-closure lower bound. For the final inequality, choose the single seed with $K_1=k$; it is admissible when $d\ge k-1$, and its value is $k-k+(k+1)=k+1$.
\end{proof}

\subsection{A numerical comparison of the lower-bound families}\label{subsec:lower-numerical-comparison}

We now compare the two simple closed-form lower-bound families numerically. Define
\[
d_{\mathrm{pp}}(k)=\min\left\{d\ge 2: L_{\mathrm{pp}}(d,k)>L_{\mathrm{bin}}(d,k)\right\}.
\]
Table~\ref{tab:lower-crossover} records the first crossover dimensions for $2\le k\le 11$, and Table~\ref{tab:lower-comparison} compares the conjectural AIM value with the published lower bound $L_{\mathrm{AEH}}$, the lifted lower-dimensional family $L_{\mathrm{bin}}$, the padding-product family $L_{\mathrm{pp}}$, and the previously defined pointwise maximum $L_{\mathrm{best}}(d,k)$.

\begin{table}[htbp]
\centering
\caption{Crossover dimensions for the lower-bound families.}
\label{tab:lower-crossover}
\small
\begin{tabular}{lcccccccccc}
\toprule
$k$ & 2 & 3 & 4 & 5 & 6 & 7 & 8 & 9 & 10 & 11 \\
\midrule
$d_{\mathrm{pp}}(k)$ & 3 & 5 & 5 & 6 & 8 & 11 & 10 & 13 & 17 & 21 \\
\bottomrule
\end{tabular}
\end{table}

\begin{table}[htbp]
\centering
\caption{Representative values of the conjectural AIM count and the lower-bound families.}
\label{tab:lower-comparison}
\small
\begin{tabular}{ccccccc}
\toprule
$d$ & $k$ & $m_{\mathrm{AIM}}(d,k)$ & $L_{\mathrm{AEH}}(d,k)$ & $L_{\mathrm{bin}}(d,k)$ & $L_{\mathrm{pp}}(d,k)$ & $L_{\mathrm{best}}(d,k)$ \\
\midrule
2 & 2 & $3$ & $3$ & $3$ & $3$ & $3$ \\
2 & 3 & $6$ & $6$ & $6$ & $4$ & $6$ \\
2 & 4 & $10$ & $10$ & $10$ & $5$ & $10$ \\
3 & 3 & $10$ & $4$ & $6$ & $5$ & $6$ \\
3 & 4 & $20$ & $8$ & $10$ & $6$ & $10$ \\
4 & 4 & $35$ & $5$ & $10$ & $9$ & $10$ \\
4 & 5 & $70$ & $10$ & $15$ & $10$ & $15$ \\
5 & 5 & $126$ & $6$ & $15$ & $13$ & $15$ \\
10 & 4 & $286$ & $4$ & $10$ & $36$ & $36$ \\
10 & 5 & $1001$ & $5$ & $15$ & $37$ & $37$ \\
\bottomrule
\end{tabular}
\end{table}
\section{Bounding the number of connected components}\label{sec:modal-regions}

\subsection{Connected components of the critical set}\label{subsec:critical-components}

Since $\Mode(\Phi)\subseteq \Crit(\Phi)$, any bound on the number of connected components of $\Crit(\Phi)$ localizes the possible connected modal regions, but it does not by itself bound their number. The following theorem gives the corresponding bound for the connected components of the critical set. 

\begin{theorem}[Critical-set bound]\label{thm:crit-components}
For every Gaussian mixture with $k\ge 2$ components in $\R^d$, the set $\Crit(\Phi)$ has at most
\[
U_{\mathrm{crit}}(d,k)=2^{\binom{k-1}{2}+2}5^{d-1}(6d-2)^{k-1}
\]
connected components.
\end{theorem}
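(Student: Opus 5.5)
The plan is to treat $\Crit(\Phi)$ as a Pfaffian variety and bound its zeroth Betti number with a Khovanskii / Gabrielov--Vorobjov / Zell-type bound on connected components. This would be the companion, for components, of Theorem~\ref{thm:Khov} in Appendix~\ref{app:external}. I would \emph{not} use the augmented system of Section~\ref{subsec:reduced-bound} here. The reason is that Theorem~\ref{thm:reduction} only establishes a bijection between root sets and does not preserve topology, and carrying that over would need an extra argument. I would instead work with the direct normalized system of Section~\ref{subsec:normalized-system}, where $x$ ranges over $\R^d$ itself.

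The steps are as follows.
\begin{enumerate}[(i)]
\item By the identity $G=(\alpha_k\phi_k)^{-1}\nabla\Phi$ from the proof of Theorem~\ref{thm:main-general}, we have
\[
\Crit(\Phi)=\{x\in\R^d: Q_1(x,\rho(x))=\cdots=Q_d(x,\rho(x))=0\}
\]
as sets. This is a literal equality of subsets of $\R^d$, so the connected components are the same.
\item The functions $\rho_1,\dots,\rho_{k-1}$ form a Pfaff chain on $\R^d$ with $n=d$, length $q=k-1$ and degree $M=2$. Each $Q_r$ is a polynomial of degree $\alpha=2$ in $(x,y)$.
\item Apply the component (Betti-sum) bound for a Pfaffian variety cut out by equations of degree at most $\alpha$. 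In the form
\[
2^{q(q-1)/2}\,\alpha(\alpha+2M-1)^{n-1}\bigl((2n-1)(\alpha+M)-2n+2\bigr)^{q},
\]
it gives, with $\alpha=M=2$ and $n=d$,
\[
2^{\binom{k-1}{2}}\cdot 2\cdot 5^{d-1}\bigl((2d-1)4-2d+2\bigr)^{k-1}
=2^{\binom{k-1}{2}+1}5^{d-1}(6d-2)^{k-1}.
\]
This already has exactly the shape $5^{d-1}(6d-2)^{k-1}$ of $U_{\mathrm{crit}}$.
\end{enumerate}

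The remaining factor of $2$ in $2^{\binom{k-1}{2}+2}$ should come from how the external theorem is stated. There are two likely sources. One is that the theorem bounds the sum of Betti numbers via a compact or bounded version of the set. The other is that one first intersects with a large closed ball $\{\|x\|^2\le R^2\}$. That is harmless here, because Lemma~\ref{lem:compact-critical} shows $\Crit(\Phi)$ is compact, so for $R$ large the ball contains it and the components are unchanged. Either way the constant in front may double. I would check the exact hypotheses of the cited theorem (whether it needs nonsingularity, a bounded domain, or a sign condition) and absorb the resulting constant into the extra factor $2$. Since the statement is an upper bound, a slightly generous constant is acceptable.

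The main obstacle is that the cited bound must apply to \emph{arbitrary}, possibly singular, Pfaffian zero sets. The classical Khovanskii root count used in Theorem~\ref{thm:main-general} only counts nondegenerate solutions, and here the components of interest are exactly the degenerate ones (Proposition~\ref{prop:nonsingleton-critical-components}).

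\begin{itemize}
\item \emph{Preferred route.} Use the Gabrielov--Vorobjov/Zell theorem, which handles singular sets directly through a perturbation and deformation argument.
\item \emph{Self-contained fallback.} Replace the system by the single equation $F=\sum_r Q_r^2$ and consider $\{F\le\varepsilon\}$ for small $\varepsilon>0$. For small regular $\varepsilon$, each component of the compact set $\Crit(\Phi)$ lies in a distinct component of $\{F\le\varepsilon\}$. Each such component contains a critical point of a generic linear function restricted to the smooth hypersurface $\{F=\varepsilon\}$. Those critical points are nondegenerate roots of a Pfaffian system, so they can be counted with Theorem~\ref{thm:Khov}.
\end{itemize}
The fallback gives a bound of the same type but with worse degree parameters, since $F$ has degree $4$. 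Matching the exact constants $5^{d-1}$ and $6d-2$ therefore relies on the external theorem with $\alpha=2$.
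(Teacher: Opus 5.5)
Your route is the paper's: apply the Gabrielov--Vorobjov connected-component bound (Theorem~\ref{thm:GVcc}) directly to the normalized system $Q_r(x,\rho(x))=0$, $r=1,\dots,d$, on the global domain $\R^d$, with chain length $k-1$ and chain and Pfaffian degrees both equal to $2$. The extra factor of $2$ needs no explanation via intersecting with a ball or compactification, because the bound as stated in Theorem~\ref{thm:GVcc} already has leading constant $2^{r(r-1)/2+1}$ (you dropped the $+1$ in the exponent), and with $r=k-1$, $\alpha=\beta=2$, $n=d$ it gives $2^{\binom{k-1}{2}+1}\cdot 2\cdot 5^{d-1}(6d-2)^{k-1}=U_{\mathrm{crit}}(d,k)$ exactly.
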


\begin{proof}
Write the normalized critical equations as
\[
F_r(x)=Q_r(x,\rho(x))=0,\qquad r=1,\dots,d.
\]
These are Pfaffian equations on the global Pfaffian domain $\R^d$ with common chain length $k-1$, chain degree $\alpha=2$, and Pfaffian degree $\beta=2$. Theorem~\ref{thm:GVcc} therefore gives
\[
2^{\binom{k-1}{2}+1}\cdot 2\cdot (2+4-1)^{d-1}\cdot \bigl((2d-1)(2+2)-2d+2\bigr)^{k-1},
\]
which simplifies to the stated formula.
\end{proof}

\begin{remark}\label{rem:no-example}
We are not aware of an example of a finite Gaussian mixture whose modal set contains a connected region with more than one point. Equally, we do not know an impossibility theorem ruling such regions out.
\end{remark}

\subsection{Homoscedastic refinement}\label{subsec:crit-hom}

The same Pfaffian argument yields an improved critical-set component bound in the homoscedastic case.

\begin{corollary}[Homoscedastic critical-set bound]\label{cor:crit-hom}
Assume that $\Sigma_1=\cdots=\Sigma_k=\Sigma$, and let
\[
r=\dim \aff\{\mu_1,\dots,\mu_k\}.
\]
If $r=0$, then $\Crit(\Phi)$ has exactly one connected component. If $r\ge 1$, then $\Crit(\Phi)$ has at most
\[
U_{\mathrm{crit,hom}}(r,k)=2^{\binom{k-1}{2}+2}4^{r-1}(4r-1)^{k-1}
\]
connected components.
\end{corollary}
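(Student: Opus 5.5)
The plan is to reduce to the intrinsic dimension $r$ with Theorem~\ref{thm:affine-rank}, and then repeat the Pfaffian argument of Theorem~\ref{thm:crit-components} with the smaller chain degree available in the homoscedastic case.

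\emph{Case $r=0$.} All means coincide, so $\Phi$ is a single Gaussian density. Its gradient vanishes only at the common mean. Hence $\Crit(\Phi)$ is a single point and has exactly one connected component.

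\emph{Case $r\ge1$, reduction.} Apply Theorem~\ref{thm:affine-rank} to write $\widetilde\Phi(u,v)=Ce^{-\|v\|^2/2}G(u)$, where $G$ is an $r$-dimensional homoscedastic mixture with $k$ components. By the gradient computation in the proof of Corollary~\ref{cor:affine-rank-consequence}, $\Crit(\widetilde\Phi)=\Crit(G)\times\{0\}$. The invertible affine change of variables is a homeomorphism carrying $\Crit(\Phi)$ onto $\Crit(\widetilde\Phi)$. The map $u\mapsto(u,0)$ is a homeomorphism of $\Crit(G)$ onto $\Crit(G)\times\{0\}$. Therefore $\Crit(\Phi)$ and $\Crit(G)$ have the same number of connected components, and it suffices to bound the components of $\Crit(G)$ in $\R^r$.

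\emph{Case $r\ge1$, Pfaffian count.} Normalize the critical equations of $G$ by its $k$th component as in Section~\ref{subsec:normalized-system}. This gives equations $Q_s(u,\rho(u))=0$ for $s=1,\dots,r$, with the following data:
\begin{itemize}
\item the ratios $\rho_1,\dots,\rho_{k-1}$ are exponentials of affine functions, since the covariances are equal;
\item so they form a Pfaff chain of length $k-1$ and chain degree $\alpha=1$ on the global domain $\R^r$;
\item each $Q_s$ is still of total degree at most $\beta=2$ in $(u,y)$, because of the products $y_ib_{i,s}(u)$.
\end{itemize}
Plug $n=r$, $q=k-1$, $\alpha=1$, $\beta=2$ into Theorem~\ref{thm:GVcc}, in the same shape used in the proof of Theorem~\ref{thm:crit-components}. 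This gives
\[
2^{\binom{k-1}{2}+1}\cdot 2\cdot(1+4-1)^{r-1}\cdot\bigl((2r-1)(1+2)-2r+2\bigr)^{k-1}.
\]
The last factor simplifies to $4r-1$, so the bound equals $2^{\binom{k-1}{2}+2}4^{r-1}(4r-1)^{k-1}$, which is $U_{\mathrm{crit,hom}}(r,k)$.

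\emph{Main obstacle.} The delicate point is the reduction step. Component counts must be preserved exactly, not merely counts of critical points as in Corollary~\ref{cor:affine-rank-consequence}. This is settled by the observation that $v=0$ on the whole critical set, so $\Crit(\widetilde\Phi)$ is literally $\Crit(G)\times\{0\}$. The remaining check is that the degree parameters fed into Theorem~\ref{thm:GVcc} are legitimate. One must confirm that $\beta$ cannot be lowered to $1$ here, since the products $y_ib_{i,s}(u)$ keep the $Q_s$ quadratic. One must also confirm that the hypotheses on $\alpha$ are satisfied with $\alpha=1$, just as they were with $\alpha=2$ in the heteroscedastic proof.
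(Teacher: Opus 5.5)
Your proposal is correct and follows the paper's proof essentially step for step: the $r=0$ single-Gaussian case, the identification of $\Crit(\Phi)$ with $\Crit(G)\times\{0\}$ via Theorem~\ref{thm:affine-rank}, and then Theorem~\ref{thm:GVcc} on $\R^r$ with $(\alpha,\beta)=(1,2)$ and chain length $k-1$, using the same simplifications $(1+4-1)^{r-1}=4^{r-1}$ and $3(2r-1)-2r+2=4r-1$. One small correction to your closing remark: you only need the $Q_s$ to have degree at most $2$, not that $\beta$ cannot be lowered, because a smaller true degree would only tighten the bound.
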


\begin{proof}
If $r=0$, then all means coincide, say $\mu_1=\cdots=\mu_k=\mu$, and the homoscedastic mixture reduces to the single Gaussian density
\[
\Phi(x)=\phi(x;\mu,\Sigma).
\]
Hence $\Crit(\Phi)=\{\mu\}$.

Assume now that $r\ge 1$. Apply Theorem~\ref{thm:affine-rank}; as in the proof of Corollary~\ref{cor:affine-rank-consequence}, the critical set is identified with $\Crit(G)\times\{0\}$ after the affine change of variables, so the number of connected components is unchanged by the reduction to the $r$-dimensional homoscedastic mixture $G$. In that reduced system the equations are Pfaffian on the global Pfaffian domain $\R^r$, the Pfaff chain degree is $\alpha=1$, and the defining Pfaffian degree remains $\beta=2$. Theorem~\ref{thm:GVcc} with $(\alpha,\beta)=(1,2)$ gives
\[
2^{\binom{k-1}{2}+1}\cdot 2\cdot 4^{r-1}\cdot (4r-1)^{k-1},
\]
which is the stated bound.
\end{proof}

\section{Conclusion}\label{sec:discussion}

The main quantitative outcome of the manuscript is a pointwise improvement over the published upper bound of \citet{AmendolaEngstromHaase2019} together with an augmented reduced bound that becomes sharper in high dimension. The direct Pfaffian estimate $U_{\mathrm{het}}(d,k)$ is smaller than $U_{\mathrm{AEH}}(d,k)$ for every $d\ge 1$ and $k\ge 2$, while the augmented estimate $U_{\mathrm{aug}}(d,k)$ is eventually smaller than both $U_{\mathrm{het}}(d,k)$ and $U_{\mathrm{AEH}}(d,k)$ for each fixed $k$ as $d$ grows. The resulting nondegenerate-critical-point bound is the pointwise minimum $\min\{U_{\mathrm{het}}(d,k),U_{\mathrm{aug}}(d,k)\}$. For finite modal sets, exponential tilting and a Morse-theoretic argument improve the corresponding mode bound by a factor of two.

In the homoscedastic setting, $U_{\mathrm{hom}}(d,k)$ improves the direct bound further, the affine-rank reduction replaces the full Euclidean dimension $d$ by the intrinsic affine rank $r$ of the means, and the augmented homoscedastic bound $U_{\mathrm{aug,hom}}(k)$ supplies a dimension-free regime of improvement. Thus, when $r\ge1$, the intrinsic homoscedastic nondegenerate-critical-point bound is
\[
\min\{U_{\mathrm{hom}}(r,k),U_{\mathrm{aug}}(r,k),U_{\mathrm{aug,hom}}(k)\},
\]
with the same factor of two reduction for finite modal sets; when $r=0$, there is exactly one critical point and one mode.

On the lower-bound side, the lifted family $L_{\mathrm{bin}}(d,k)$ improves the published lower bound in regimes where the maximum of $\binom{k}{r}$ over $2\le r\le \min(d,k)$ is larger than $\binom{k}{d}$, while the padding-product family $L_{\mathrm{pp}}(d,k)$ eventually dominates $L_{\mathrm{bin}}(d,k)$ for every fixed $k\ge 4$. The seed-closure theorem packages the same dimension-lifting, product, and padding operations in a reusable form. In particular, any future seed construction immediately yields a new lower bound by insertion into $L_{\mathcal S}(d,k)$, and the regular-simplex seed gives an example of such a homoscedastic lower-bound family.

For connected sets, we obtain explicit upper bounds $U_{\mathrm{crit}}(d,k)$ and $U_{\mathrm{crit,hom}}(r,k)$ for the number of connected components of the critical set. These bounds do not by themselves control connected modal regions, because a disconnected subset of a connected critical component may have several connected components. At present we are not aware of either an example of a finite Gaussian mixture with a nontrivial connected modal region or an impossibility theorem ruling such regions out.

We presently do not resolve the conjecture discussed at the 2011 American Institute of Mathematics workshop on Singular Learning Theory, that
\[
m_{\mathrm{AIM}}(d,k)=\binom{d+k-1}{d}
\]
should be the exact maximum possible number of modes of a $d$-dimensional Gaussian mixture with $k$ components. The current gap remains substantial. The ridgeline analysis of \citet{RayLindsay2005}, the sharp two-component theorem of \citet{RayRen2012}, and the parameter-sensitive bivariate refinement of \citet{KabataMatsumotoUchidaUeki2025} show that exact answers depend delicately on the covariance-dependent geometry even in low-dimensional and low-complexity regimes. Outside those settings, however, we still lack general counting theorems that exploit enough of the problem-specific geometry to force the conjectural formula.

\appendix
\renewcommand{\theHsection}{appendix.\Alph{section}}
\renewcommand{\theHsubsection}{appendix.\Alph{section}.\arabic{subsection}}
\renewcommand{\theHequation}{appendix.\Alph{section}.\arabic{equation}}

\section{External results}\label{app:external}

This appendix records the external bounds from fewnomial and semi-Pfaffian geometry on which the manuscript depends, together with the standard Morse handle theorem used in Section~\ref{subsec:morse-factor}. Theorem~\ref{thm:Khov} records the form of \citet[Section 3.12, Corollary~5]{Khovanskii1991} used here, Theorem~\ref{thm:GVcc} records the global-domain form of \citet[Corollary~3.3]{GabrielovVorobjov2004} used here, and Theorem~\ref{thm:morse-handle} is the Morse handle-attachment theorem in the form used below. The handle-decomposition background follows \citet[Chapter~3]{Knudson2015}, and the simultaneous form for several critical points on one critical level is recorded in \citet[Remark~2.9]{Nicolaescu2011}.

\begin{theorem}[Pfaff-chain root bound]\label{thm:Khov}
Let $U\subseteq\R^n$ be a connected open set, and let $y_1,\dots,y_q$ be a Pfaff chain of length $q$ and degree $M$ on $U$, meaning that for every $1\le i\le q$ and $1\le j\le n$,
\[
\frac{\partial y_i}{\partial x_j}(x)=P_{ij}(x,y_1(x),\dots,y_i(x))
\]
with $P_{ij}$ a polynomial of total degree at most $M$. Let $Q_1,\dots,Q_n$ be polynomials of degrees $p_1,\dots,p_n$ in the variables $x_1,\dots,x_n,y_1,\dots,y_q$. Then the number of nondegenerate roots of
\[
Q_1(x,y(x))=\cdots=Q_n(x,y(x))=0
\]
in $U$ is at most
\[
2^{q(q-1)/2}\,p_1\cdots p_n\,L^q,
\qquad
L=\sum_{r=1}^n (p_r-1)+\min(n,q)M+1.
\]
\end{theorem}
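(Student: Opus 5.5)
Since this result is quoted from \citet[Section~3.12, Corollary~5]{Khovanskii1991}, in the paper it is enough to check that the hypotheses match ours: a Pfaff chain in the triangular form used above, polynomial equations in $(x,y)$, and nondegenerate roots in a connected open set. I would not reprove it. If a self-contained argument were wanted, I would follow Khovanskii's inductive scheme based on the Rolle--Khovanskii principle for separating solutions. The induction is on the chain length $q$.

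The base case $q=0$ is Bezout's theorem: a polynomial system of degrees $p_1,\dots,p_n$ has at most $p_1\cdots p_n$ nondegenerate real roots.

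For the inductive step, replace the last chain element $y_q$ by a new independent variable $u$. This moves the problem to $U\times\R$ with coordinates $(x,u)$. Consider the Pfaffian form $\omega=du-\sum_j P_{qj}(x,y_1,\dots,y_{q-1},u)\,dx_j$. Its integral manifold $\{u=y_q(x)\}$ is a graph, hence a separating solution of $\omega=0$.

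The original roots then correspond to intersections of that graph with the curve
\[
\Gamma=\{Q_1(x,y',u)=\cdots=Q_n(x,y',u)=0\}\subset U\times\R,
\qquad y'=(y_1,\dots,y_{q-1}).
\]
Before applying the next step I would perturb the constants of the $Q_r$ so that $\Gamma$ is a smooth curve. Nondegenerate roots persist under such small perturbations, so this costs nothing.

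The Rolle--Khovanskii lemma then gives the bound
\[
\#(\Gamma\cap\{u=y_q\})\le \#\{\text{noncompact components of }\Gamma\}+\#\{p\in\Gamma:\omega|_\Gamma(p)=0\}.
\]
The second set is cut out by $Q_1,\dots,Q_n$ together with one extra equation. That equation is the determinant of the Jacobian of $(Q_1,\dots,Q_n)$ bordered by the coefficients of $\omega$. Its degree is at most $\sum_r(p_r-1)+M+1$, or more precisely it is controlled by $L$. The resulting system has $n+1$ unknowns and a chain $y_1,\dots,y_{q-1}$ of length $q-1$, so the inductive hypothesis applies to it. The noncompact components of $\Gamma$ are counted by intersecting $\Gamma$ with a large sphere or a generic hyperplane, which again gives a system of chain length $q-1$.

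Iterating the step $q$ times produces one factor comparable to $L$ at each stage. At stage $i$, the counts of contact points and of noncompact components add a further factor $2^{i-1}$, and the product of these factors is $2^{q(q-1)/2}$.

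The main obstacle is the degree bookkeeping. One has to show that the successive bordered-Jacobian equations accumulate into the single quantity
\[
L=\sum_{r=1}^n (p_r-1)+\min(n,q)M+1
\]
rather than growing with each stage. The $\min(n,q)M$ term reflects that at most $\min(n,q)$ rows of the Jacobian determinant involve chain derivatives of degree $M$. I would first try to follow Khovanskii's own accounting, in which the number of unknowns and the degree of the new equation are tracked simultaneously. I would also verify that the genericity perturbations preserve nondegeneracy at every stage.
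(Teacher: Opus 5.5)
Your proposal matches the paper: the statement is recorded in Appendix~\ref{app:external} only as an external result quoted from \citet[Section~3.12, Corollary~5]{Khovanskii1991}, with no proof given. What the paper relies on is exactly your check that the triangular Pfaff chain, the polynomial equations in $(x,y)$, and the count of nondegenerate roots in a connected open set fit its hypotheses.

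Your optional sketch is not needed, but its degree claim is wrong as set up. Once $q\ge2$, lifting only $y_q$ to a variable $u$ gives a bordered Jacobian of degree up to $\sum_r(p_r-1)+nM$, not $\sum_r(p_r-1)+M+1$, because every row $dQ_r$ then contains chain derivatives. The $\min(n,q)M$ term appears only when all chain elements are lifted to independent coordinates at once. The contact condition is then the single $(n+q)\times(n+q)$ determinant of $\omega_1\wedge\cdots\wedge\omega_q\wedge dQ_1\wedge\cdots\wedge dQ_n$.
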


\begin{theorem}[Pfaffian zero-set connected-component bound]\label{thm:GVcc}
Let $X\subseteq\R^n$ be a Pfaffian zero set on a global Pfaff chain on $\R^n$, defined by equations of common chain length $r$, chain degree $\alpha$, and maximal Pfaffian degree $\beta$. Then the number of connected components of $X$ is bounded by
\[
2^{r(r-1)/2+1}\beta(\alpha+2\beta-1)^{n-1}\bigl((2n-1)(\alpha+\beta)-2n+2\bigr)^r.
\]
\end{theorem}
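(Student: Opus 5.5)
Since this is an external result, I would not reprove it from scratch. The plan is to read it off from \citet[Corollary~3.3]{GabrielovVorobjov2004}, and to check that the hypotheses and parameters match the way Theorem~\ref{thm:GVcc} is used in Theorem~\ref{thm:crit-components} and Corollary~\ref{cor:crit-hom}.

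\textbf{Translating conventions.} First I will fix the translation between our Pfaff-chain conventions and Gabrielov--Vorobjov's. Our chain $y_1,\dots,y_r$ on $\R^n$ has triangular polynomial derivatives of degree at most $\alpha$, exactly as in Section~\ref{subsec:normalized-system}. Each defining function $F_j(x)=Q_j(x,y(x))$ has Pfaffian degree at most $\beta$. Since the domain is all of $\R^n$, no semialgebraic boundary conditions enter, so their global-domain (``Pfaffian variety'') case applies.

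\textbf{Reconstructing the count.} To make the specialization transparent, I would recall the mechanism behind their corollary. Components of $X$ can be counted by components of a single zero set, obtained by replacing the system by one equation $F=\sum_j F_j^2$, or by a deformation $F=\varepsilon$ as in their treatment. Each bounded component of the deformed smooth hypersurface contains a critical point of a generic linear (or distance-squared) function, and unbounded components can be handled by intersecting with a large sphere. These critical points are nondegenerate roots of a square Pfaffian system: the equation itself together with $n-1$ Lagrange-type minors. Those minors have degree at most $\alpha+2\beta-1$ in $(x,y)$, because differentiating a degree-$\beta$ expression raises the degree by $\alpha-1$ through the chain. Theorem~\ref{thm:Khov} then gives
\[
2^{r(r-1)/2}\,\beta(\alpha+2\beta-1)^{n-1}L^r,\qquad L=(\beta-1)+(n-1)(\alpha+2\beta-2)+\min(n,r)\alpha+1 .
\]
A factor of $2$ accounts for the two-sided deformation or for the boundary sphere. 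Comparing $L$ with $(2n-1)(\alpha+\beta)-2n+2$ shows that the paper's expression is at least as large as the one produced by this route. That is all that is needed, since the statement is only an upper bound.

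\textbf{Main obstacle.} The hard step is bookkeeping, not new mathematics. One must check that ``maximal Pfaffian degree $\beta$'' for a system of equations is used in Gabrielov--Vorobjov's corollary without the degree doubling that the sum-of-squares trick would cause. Their corollary is stated directly for zero sets of several Pfaffian functions of degree at most $\beta$, so I would cite it in that form rather than pass through $\sum_j F_j^2$.

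\textbf{Checks against the paper's usage.} Finally, I would verify the two specializations. With $(n,r,\alpha,\beta)=(d,k-1,2,2)$ the bound becomes $U_{\mathrm{crit}}(d,k)$. With $(r,k-1,1,2)$ it becomes $U_{\mathrm{crit,hom}}(r,k)$.
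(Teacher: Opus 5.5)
Your main plan matches the paper's treatment. The paper gives no proof of Theorem~\ref{thm:GVcc}: it records the result in Appendix~\ref{app:external} purely as a citation of \citet[Corollary~3.3]{GabrielovVorobjov2004}. Your two specializations are also correct: $(n,r,\alpha,\beta)=(d,k-1,2,2)$ gives $U_{\mathrm{crit}}(d,k)$, and $(r,k-1,1,2)$ gives $U_{\mathrm{crit,hom}}(r,k)$.

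The ``reconstruction'' paragraph, however, is not a consistent derivation and should not be offered as support for the constant. Differentiating a Pfaffian function of degree $\beta$ along the chain gives degree $\alpha+\beta-1$, not $\alpha+2\beta-1$. The degree $\alpha+2\beta-1$ arises only for $F=\sum_jF_j^2$, and then the first equation $F=\varepsilon$ has degree $2\beta$, not $\beta$.

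With the consistent degrees $p_1=2\beta$ and $p_2=\cdots=p_n=\alpha+2\beta-1$, Theorem~\ref{thm:Khov} gives the prefactor $2^{r(r-1)/2}\cdot 2\beta(\alpha+2\beta-1)^{n-1}$. So the extra power of $2$ in the stated bound is just the $2$ in $2\beta$. This is consistent with $\beta$ bounding the individual $F_j$. It is not a deformation factor; there is no two-sided deformation, since $F\ge 0$. But the exponent base becomes
\[
L=(2\beta-1)+(n-1)(\alpha+2\beta-2)+\min(n,r)\alpha+1=(2n-1)\alpha+2n\beta-2n+2\quad(r\ge n),
\]
which exceeds $(2n-1)(\alpha+\beta)-2n+2$ by $\beta$. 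Your comparison came out in favour of the stated expression only because you assigned degree $\beta$ to the first equation and degree $\alpha+2\beta-1$ to the others, and these cannot both hold.

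The sketch has two further gaps:
\begin{enumerate}[(i)]
\item It omits the genericity needed to make the counted critical points nondegenerate, which Theorem~\ref{thm:Khov} requires.
\item It handles unbounded components by intersecting with a large sphere. That adds an equation to the Lagrange system; it does not cost a factor of $2$.
\end{enumerate}

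So the constant in Theorem~\ref{thm:GVcc} rests entirely on the exact statement of the cited corollary, just as in the paper. The ``main obstacle'' you identify has to be settled by checking that source: that $\beta$ bounds the degrees of the individual $F_j$, and the precise form of the last factor. Your heuristic count cannot settle it, so drop the claim that it is ``all that is needed.''
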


\begin{theorem}[Morse handle attachment]\label{thm:morse-handle}
Let $f:M\to\R$ be a Morse function on a smooth $d$-manifold $M$, and let $a<b$ be real numbers such that $f^{-1}([a,b])$ is compact. Suppose that the interval $(a,b)$ contains exactly one critical value $c$, and that $f^{-1}(c)$ contains critical points $p_1,\dots,p_s$ of Morse indices $\lambda_1,\dots,\lambda_s$. Then, for sufficiently small $\varepsilon>0$, the sublevel set $\{f\le c+\varepsilon\}$ is diffeomorphic, after smoothing corners, to $\{f\le c-\varepsilon\}$ with $s$ handles attached, one $\lambda_i$-handle $\mathbb D^{\lambda_i}\times\mathbb D^{d-\lambda_i}$ attached along $\partial\mathbb D^{\lambda_i}\times\mathbb D^{d-\lambda_i}$ for each critical point $p_i$.
\end{theorem}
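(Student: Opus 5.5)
The plan is to follow Milnor's classical proof of the handle theorem. The only changes are that the local modification is carried out at all $s$ critical points on the level $f^{-1}(c)$ at once, and that compactness of $f^{-1}([a,b])$ replaces compactness of $M$.

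\emph{Step 1 (local normal forms and auxiliary data).} Because $f$ is Morse, the critical points in the compact set $f^{-1}([a,b])$ are isolated and hence finite in number. By hypothesis the ones on level $c$ are exactly $p_1,\dots,p_s$. By the Morse lemma, I choose pairwise disjoint coordinate charts $U_i\ni p_i$ in which
\[
f=c-\|u\|^2+\|v\|^2,\qquad u\in\R^{\lambda_i},\ v\in\R^{d-\lambda_i}.
\]
I then fix a Riemannian metric on $M$ that is Euclidean in each chart $U_i$, and choose $\varepsilon>0$ so small that $[c-\varepsilon,c+\varepsilon]\subset(a,b)$ and each chart contains the closed ball of radius $\sqrt{2\varepsilon}$.

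\emph{Step 2 (Milnor's modification).} Pick a smooth cutoff $\mu:[0,\infty)\to[0,\infty)$ with $\mu(0)>\varepsilon$, $\mu(t)=0$ for $t\ge2\varepsilon$, and $-1<\mu'\le0$. Define
\[
F=f-\mu\bigl(\|u\|^2+2\|v\|^2\bigr)
\]
inside each $U_i$, and $F=f$ elsewhere. The standard computation, done chart by chart, gives three facts:
\begin{itemize}
\item $F$ has the same critical points as $f$;
\item $\{F\le c+\varepsilon\}=\{f\le c+\varepsilon\}$;
\item $F^{-1}([c-\varepsilon,c+\varepsilon])$ contains no critical point of $F$.
\end{itemize}
This set is compact, being a closed subset of $f^{-1}([a,b])$.

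\emph{Step 3 (deformation away from critical points).} Flow along $-\nabla F/\|\nabla F\|^2$. The flow is defined for the required time because the relevant region is compact and free of critical points. It gives a diffeomorphism from $\{F\le c+\varepsilon\}$ onto $\{F\le c-\varepsilon\}$. Next, write
\[
\{F\le c-\varepsilon\}=\{f\le c-\varepsilon\}\cup\bigcup_{i=1}^s H_i,
\]
where $H_i$ is the closure of $\{F\le c-\varepsilon<f\}$ inside $U_i$. In the coordinates of $U_i$, the set $H_i$ is a thickening of the core disk $\{\|u\|^2\le\varepsilon,\ v=0\}$. I would identify $H_i$ with $\mathbb D^{\lambda_i}\times\mathbb D^{d-\lambda_i}$ by radially rescaling in the $u$- and $v$-directions separately. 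Under this identification, $H_i\cap\{f\le c-\varepsilon\}$ corresponds to $\partial\mathbb D^{\lambda_i}\times\mathbb D^{d-\lambda_i}$. The handles $H_i$ are pairwise disjoint because the charts are, so attaching all of them at once is legitimate; this is the simultaneous form noted in Nicolaescu's Remark~2.9.

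\emph{Main obstacle.} The hardest step is upgrading this picture from a homotopy equivalence to a diffeomorphism after smoothing corners. This means checking that $H_i$ really is a product disk meeting the lower sublevel set exactly along $\partial\mathbb D^{\lambda_i}\times\mathbb D^{d-\lambda_i}$, with the corners along $\partial\mathbb D^{\lambda_i}\times\partial\mathbb D^{d-\lambda_i}$ removable by a standard rounding. To do this, I would parametrize $H_i$ by the $u$-radius and $v$-radius and use monotonicity of $\mu$ to show that each radial fiber meets the relevant boundaries transversally and exactly once. This is the detailed argument in Knudson's Chapter~3, which I would follow verbatim.
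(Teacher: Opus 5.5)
\textbf{Gap: the identification of $H_i$ with a handle.} The paper does not prove this statement. It records it in Appendix~\ref{app:external} as an external result and defers to Knudson's Chapter~3 and Nicolaescu's Remark~2.9. Your plan is the standard argument behind those citations: Milnor's modification, done simultaneously in disjoint Morse charts, with compactness of $f^{-1}([a,b])$ replacing compactness of $M$. Steps~1--2 and the gradient-flow part of Step~3 are correct.

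The gap is the identification of $H_i$ with $\mathbb D^{\lambda_i}\times\mathbb D^{d-\lambda_i}$. You rightly call this the crux, but the method you describe cannot work for Milnor's region. Take $0<\lambda_i<d$ and put $\xi=\|u\|^2$, $\eta=\|v\|^2$. Then $H_i=\{\xi-\eta\le\varepsilon\le\xi-\eta+\mu(\xi+2\eta),\ \xi+2\eta\le 2\varepsilon\}$. There are two problems.

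\emph{$H_i$ is not a product region.} For $\varepsilon<\xi<4\varepsilon/3$, the points with $\eta<\xi-\varepsilon$ lie in $\{f<c-\varepsilon\}$. So the $v$-slice of $H_i$ over such a $u$ is a shell missing $v=0$, not a disk. Likewise, the $u$-slices are shells for $\eta$ slightly below $\varepsilon/3$. No separate radial rescaling of $u$ and $v$ can therefore produce $\mathbb D^{\lambda_i}\times\mathbb D^{d-\lambda_i}$.

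\emph{$H_i$ has a cusp where a handle has a corner.} This is the more serious problem. The attaching face $H_i\cap\{f=c-\varepsilon\}$ and the free face $H_i\cap\{F=c-\varepsilon\}$ meet along $\{\xi-\eta=\varepsilon,\ \xi+2\eta=2\varepsilon\}$. They meet tangentially to infinite order there, because $f-F=\mu(\xi+2\eta)$ and $\mu$ is flat at $2\varepsilon$. So where a handle has its corner $\partial\mathbb D^{\lambda_i}\times\partial\mathbb D^{d-\lambda_i}$, Milnor's $H_i$ has a cuspidal edge. It is not a manifold with corners there, and the transversality you plan to check fails exactly at that edge. Milnor's region is built for the homotopy statement $\{f\le c+\varepsilon\}\simeq\{f\le c-\varepsilon\}\cup e^{\lambda_1}\cup\cdots\cup e^{\lambda_s}$, not for the diffeomorphism.

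\textbf{How to close it.} Use a genuine product handle sitting on the lower level, for example $D_i=\{\xi\le\varepsilon+\eta,\ \eta\le\delta\}$ with $\delta>0$ small.
\begin{itemize}
\item The map $(u,v)\mapsto\bigl(u/\sqrt{\varepsilon+\eta},\,v/\sqrt{\delta}\bigr)$ is a diffeomorphism of $D_i$ onto $\mathbb D^{\lambda_i}\times\mathbb D^{d-\lambda_i}$.
\item It carries $D_i\cap\{f\le c-\varepsilon\}=\{\xi=\varepsilon+\eta,\ \eta\le\delta\}$ onto $\partial\mathbb D^{\lambda_i}\times\mathbb D^{d-\lambda_i}$, and the corners are transverse.
\item A standard gradient-like-flow argument on $f^{-1}([c-\varepsilon,c+\varepsilon])$ outside $\bigcup_iD_i$ then shows that $\{f\le c+\varepsilon\}$ is diffeomorphic to $\{f\le c-\varepsilon\}\cup\bigcup_iD_i$ with corners smoothed.
\end{itemize}

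\textbf{What the paper actually needs.} The paper uses this theorem only to count path components, in Lemma~\ref{lem:handle-components} and Proposition~\ref{prop:morse-half}. For that purpose the homotopy version, which Milnor's construction already gives, is enough. A $0$-cell adds one component, a $1$-cell merges at most two, and a $j$-cell with $j\ge2$ has a connected attaching sphere.
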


\begin{remark}
For $s=1$, Theorem~\ref{thm:morse-handle} is the standard single-handle attachment theorem. The case $s>1$ is the corresponding simultaneous attachment at a common critical level; cf. \citet[Remark~2.9]{Nicolaescu2011}. The superlevel-set statement for a function $h$ follows by applying the sublevel-set statement to $-h$: crossing a critical value of $h$ downward attaches a handle of index $d-\lambda_i$, where $\lambda_i$ is the Morse index of $h$ at the corresponding critical point.
\end{remark}

\section{Proofs and technical results}\label{app:deferred}

\subsection{Proof of Lemma~\ref{lem:low-superlevel-connected}}\label{subsec:low-superlevel-proof}

\begin{proof}
Write $h_i(x)=\alpha_i\phi_i(x)$, so that $h=\sum_i h_i$. Since
\[
\nabla h_i(x)=h_i(x)\Sigma_i^{-1}(\mu_i-x),
\]
we have
\[
x\cdot\nabla h_i(x)=h_i(x)\{x^\top\Sigma_i^{-1}\mu_i-x^\top\Sigma_i^{-1}x\}.
\]
Let $\lambda_i>0$ be the smallest eigenvalue of $\Sigma_i^{-1}$ and let $C_i=\|\Sigma_i^{-1}\mu_i\|$. Then
\[
x^\top\Sigma_i^{-1}\mu_i-x^\top\Sigma_i^{-1}x
\le C_i\|x\|-\lambda_i\|x\|^2<0
\]
for all sufficiently large $\|x\|$. Hence there exists $R>0$ such that
\begin{equation}\label{eq:radial-decrease}
x\cdot\nabla h(x)<0
\qquad (\|x\|\ge R).
\end{equation}
In particular, no critical point lies outside $B_R$.

The compact ball $\overline B_R$ has positive minimum value
\[
c_R=\min_{\overline B_R}h>0.
\]
The critical set is compact by Lemma~\ref{lem:compact-critical}. Since $h$ is Morse, its critical points are isolated; a compact discrete subset of $\R^d$ is finite. Therefore there are only finitely many critical values. Choose $\tau\in(0,c_R)$ smaller than all critical values and also a regular value. Since $h(x)\to0$ as $\|x\|\to\infty$, the superlevel set $S_\tau$ is compact.

Then $\overline B_R\subseteq S_\tau$. If $x\in S_\tau$ and $\|x\|>R$, let $u=x/\|x\|$ and consider $g(t)=h(tu)$ for $t\ge R$. By \eqref{eq:radial-decrease},
\[
g'(t)=u\cdot \nabla h(tu)=\frac{1}{t}(tu)\cdot\nabla h(tu)<0.
\]
Thus $g$ decreases as $t$ increases. Hence the radial segment from $x$ to $Ru$ is contained in $S_\tau$, and $Ru\in \overline B_R\subseteq S_\tau$. Every point of $S_\tau$ is therefore path-connected to $\overline B_R$, and $\overline B_R$ is connected. Hence $S_\tau$ is connected and contains all critical points.
\end{proof}

\subsection{Proof of Lemma~\ref{lem:generic-tilt}}\label{subsec:generic-tilt-proof}

\begin{proof}
Choose pairwise disjoint compact neighborhoods $K_i$ with $p_i\in\mathrm{int}(K_i)$ and
\[
\Phi(p_i)>\max_{x\in\partial K_i}\Phi(x),
\qquad i=1,\dots,M.
\]
Because $e^{c\cdot x}\to 1$ uniformly on the compact set $\bigcup_iK_i$ as $c\to 0$, the strict inequalities
\[
H_c(p_i)>\max_{x\in\partial K_i}H_c(x),
\qquad i=1,\dots,M,
\]
with $H_c(x)=e^{c\cdot x}\Phi(x)$, hold whenever $\|c\|$ is sufficiently small. For such $c$, the maximum of $H_c$ on each compact set $K_i$ is attained in $\mathrm{int}(K_i)$, and therefore gives a local maximum of $H_c$.

If $\Phi$ has finite modal set and $p_1,\dots,p_M$ are its modes, then each $p_i$ is an isolated strict local maximum. Indeed, if $p_i$ were not strict, then points with value $\Phi(p_i)$ would occur arbitrarily near $p_i$; inside a local-maximizing neighborhood of $p_i$, such points would themselves be local maxima, contradicting finiteness of $\Mode(\Phi)$. Thus the preceding construction applies to the list of all modes.

For each component,
\[
e^{c\cdot x}\phi_i(x;\mu_i,\Sigma_i)
=\exp\left(c^\top\mu_i+\frac12c^\top\Sigma_i c\right)
\phi_i(x;\mu_i+\Sigma_i c,\Sigma_i).
\]
Thus $H_c$ is a positive scalar multiple of a $k$-component Gaussian mixture density. If the original mixture is homoscedastic, then the tilted components have the same common covariance. In that case every mean is shifted by the same vector $\Sigma c$, so the affine rank of the means is unchanged.

Finally set
\[
F(x)=-\frac{\nabla\Phi(x)}{\Phi(x)}=-\nabla\log\Phi(x).
\]
Then
\[
\nabla H_c(x)=e^{c\cdot x}\bigl(\nabla\Phi(x)+c\Phi(x)\bigr)
=e^{c\cdot x}\Phi(x)(c-F(x)).
\]
Critical points of $H_c$ are therefore exactly the solutions of $F(x)=c$. Sard's theorem implies that regular values of the smooth map $F:\R^d\to\R^d$ are dense. Choose a nonzero regular value $c$ in the small ball allowed by the preceding paragraph. At a critical point of $H_c$, differentiating the displayed equation for $\nabla H_c$ gives
\[
D^2H_c(x)=e^{c\cdot x}\Phi(x)(-DF(x)),
\]
because the derivative of the scalar factor multiplies the vanishing vector $c-F(x)$. Since $c$ is a regular value, $DF(x)$ is invertible at every solution of $F(x)=c$. Hence every critical point of $H_c$ is nondegenerate. The critical set of $H_c$ agrees with that of the corresponding normalized Gaussian mixture density, and is compact by Lemma~\ref{lem:compact-critical}. Therefore the critical set is finite. The normalized tilted mixture has finite modal set, and each of its local maxima is nondegenerate.
\end{proof}

\subsection{Proof of Theorem~\ref{thm:affine-rank}}\label{subsec:affine-rank-proof}

\begin{proof}
Choose a base point $a\in \aff\{\mu_1,\dots,\mu_k\}$ and let $B=\Sigma^{-1/2}$. Set
\[
\nu_i=B(\mu_i-a),\qquad i=1,\dots,k,
\]
and let $L=\operatorname{span}\{\nu_1,\dots,\nu_k\}$. Since the means have affine rank $r$, the linear space $L$ has dimension $r$. Choose an orthogonal matrix $O$ carrying $L$ onto $\R^r\times\{0\}\subseteq \R^r\times\R^{d-r}$ and define the affine map
\[
T(x)=OB(x-a).
\]
Then $T(\mu_i)=(m_i,0)$ for suitable $m_i\in\R^r$. If $z=(u,v)\in\R^r\times\R^{d-r}$ and $x=T^{-1}(z)$, each pushed-forward component density becomes
\[
\widetilde\phi_i(z)=|\det DT^{-1}|\,\phi_i(T^{-1}(z))
=(2\pi)^{-d/2}\exp\!\left(-\frac12\|z-(m_i,0)\|^2\right).
\]
Therefore
\begin{align*}
\widetilde\Phi(u,v)
&=\sum_{i=1}^k \alpha_i\widetilde\phi_i(u,v) \\
&=(2\pi)^{-d/2}\sum_{i=1}^k \alpha_i\exp\!\left(-\frac12\|u-m_i\|^2\right)e^{-\|v\|^2/2} \\
&=C e^{-\|v\|^2/2}G(u),
\end{align*}
where
\[
G(u)=(2\pi)^{-r/2}\sum_{i=1}^k \alpha_i\exp\!\left(-\frac12\|u-m_i\|^2\right),
\qquad
C=(2\pi)^{-(d-r)/2}.
\]
This proves the asserted factorization.
\end{proof}

\subsection{Proofs for the lower-bound constructions}\label{subsec:lower-bound-proofs}

\begin{proof}[Proof of Proposition~\ref{prop:ambient-monotonicity}]
If $r=d$, there is nothing to prove. Assume $r<d$. Let
\[
G(u)=\sum_{i=1}^k \alpha_i\psi_i(u),\qquad u\in\R^r,
\]
be a $k$-component Gaussian mixture density on $\R^r$, and let
\[
\gamma(v)=\phi(v;0,\Theta),\qquad v\in\R^{d-r},
\]
be any centered Gaussian density on $\R^{d-r}$. Define
\[
\widetilde \Phi(u,v)=G(u)\gamma(v),\qquad (u,v)\in\R^r\times\R^{d-r}.
\]
Writing $\psi_i(u)=\phi(u;\nu_i,\Lambda_i)$, we have
\[
\psi_i(u)\gamma(v)=\phi\bigl((u,v);(\nu_i,0),\diag(\Lambda_i,\Theta)\bigr),
\]
so $\widetilde \Phi$ is a $k$-component Gaussian mixture density on $\R^d$. If $G$ is homoscedastic, choose a fixed $\Theta$ and the lifted mixture is homoscedastic.

We claim that
\[
\Mode(\widetilde \Phi)=\Mode(G)\times\{0\}.
\]
If $u_\ast\in \Mode(G)$, choose a neighborhood $U$ of $u_\ast$ such that $G(u)\le G(u_\ast)$ for all $u\in U$. Since the centered Gaussian $\gamma$ has a unique global maximum at $0$,
\[
\widetilde \Phi(u,v)=G(u)\gamma(v)\le G(u_\ast)\gamma(0)=\widetilde \Phi(u_\ast,0)
\qquad ((u,v)\in U\times\R^{d-r}),
\]
so $(u_\ast,0)\in\Mode(\widetilde \Phi)$.

Conversely, let $(u_\ast,v_\ast)\in\Mode(\widetilde \Phi)$. If $v_\ast\neq 0$, then for some $t\in(0,1)$ sufficiently close to $1$,
\[
\gamma(tv_\ast)>\gamma(v_\ast),
\]
whence
\[
\widetilde \Phi(u_\ast,tv_\ast)=G(u_\ast)\gamma(tv_\ast)>G(u_\ast)\gamma(v_\ast)=\widetilde \Phi(u_\ast,v_\ast),
\]
contradicting local maximality. Thus $v_\ast=0$. If now $u_\ast\notin\Mode(G)$, then every neighborhood of $u_\ast$ contains some $u$ with $G(u)>G(u_\ast)$, and therefore
\[
\widetilde \Phi(u,0)=G(u)\gamma(0)>G(u_\ast)\gamma(0)=\widetilde \Phi(u_\ast,0),
\]
again contradicting local maximality. Hence $u_\ast\in\Mode(G)$, proving the claim.

If $\Mode(G)$ is finite, then the identity $\Mode(\widetilde\Phi)=\Mode(G)\times\{0\}$ shows that $\Mode(\widetilde\Phi)$ is finite with the same cardinality. Therefore every finite modal-set count realized in dimension $r$ is also realized in dimension $d$, and so $m(d,k)\ge m(r,k)$. The same identity also shows that isolated modes of $G$ lift to isolated modes of $\widetilde\Phi$. The homoscedastic statement follows from the homoscedastic construction above.
\end{proof}

\begin{proof}[Proof of Corollary~\ref{cor:lifted-lower}]
Fix $r$ with $2\le r\le \min(d,k)$. By \citet[Theorem~4.1 and its proof]{AmendolaEngstromHaase2019}, there exists a $k$-component Gaussian mixture $\Phi$ on $\R^r$ and at least
\[
L_{\mathrm{AEH}}(r,k)=\binom{k}{r}+k
\]
pairwise disjoint compact neighborhoods satisfying the strict boundary inequalities in Lemma~\ref{lem:generic-tilt}. Applying that lemma and normalizing the tilted density gives a $k$-component Gaussian mixture on $\R^r$ with finite modal set and at least $L_{\mathrm{AEH}}(r,k)$ modes. Proposition~\ref{prop:ambient-monotonicity} then lifts this mixture to dimension $d$. Hence
\[
m(d,k)\ge L_{\mathrm{AEH}}(r,k)=\binom{k}{r}+k
\]
for every such $r$, and taking the maximum over $2\le r\le \min(d,k)$ gives the first claim. The second claim follows because the binomial coefficients are nondecreasing on $0\le r\le \lfloor k/2\rfloor$.
\end{proof}

\begin{proof}[Proof of Proposition~\ref{prop:padding-products}]
For part~(a), let the original mixture $\Phi$ have isolated modes $p_1,\dots,p_M$. Since each $p_i$ is an isolated local maximum, it is strict. If $\ell=0$, Lemma~\ref{lem:generic-tilt}, followed by normalization, gives a mixture with a compact nondegenerate critical set, hence finite modal set, and at least these $M$ nondegenerate modes. Assume from now on that at least one component is to be added. It is enough to add one component; adding several components is obtained by choosing the new components far from the original mixture and far from each other, and repeating the same argument on disjoint balls. Choose pairwise disjoint closed balls
\[
B_i=\overline B(p_i,\rho_i),\qquad i=1,\dots,M,
\]
such that
\[
\sup_{x\in \partial B_i}\Phi(x)<\Phi(p_i)
\qquad (i=1,\dots,M).
\]
Set
\[
\eta=\min_{1\le i\le M}\left(\Phi(p_i)-\sup_{x\in\partial B_i}\Phi(x)\right)>0
\]
and $K=\bigcup_iB_i$.

Choose any positive definite covariance matrix $\Theta$; in the homoscedastic case choose $\Theta$ equal to the common covariance of the original mixture. Let $\psi_a(x)=\phi(x;a,\Theta)$. Fix a radius $\rho>0$ such that
\[
\sup_{\|x-a\|=\rho}\psi_a(x)<\frac12\psi_a(a);
\]
this inequality is independent of $a$ after translation. Choose $\theta\in(0,1/2]$ and then choose
\[
\delta<\frac{\theta\psi_a(a)}{2(1-\theta)},
\]
where $\psi_a(a)$ is independent of $a$. Because $\Phi$ and all Gaussian tails tend to zero at infinity, we can choose $a$ so far from $K$ that $C=\overline B(a,\rho)$ is disjoint from $K$,
\[
\sup_{x\in K}\psi_a(x)<\frac{\eta}{8},
\qquad
\sup_{x\in C}\Phi(x)<\delta.
\]
Define
\[
\widetilde\Phi=(1-\theta)\Phi+\theta\psi_a.
\]
For $x\in\partial B_i$,
\[
\widetilde\Phi(x)\le (1-\theta)\sup_{\partial B_i}\Phi+\theta\sup_K\psi_a,
\]
whereas $\widetilde\Phi(p_i)\ge (1-\theta)\Phi(p_i)$. Hence
\[
\widetilde\Phi(p_i)-\sup_{\partial B_i}\widetilde\Phi
\ge (1-\theta)\eta-\theta\frac{\eta}{8}>0.
\]
Thus each $B_i$ contains a mode of $\widetilde\Phi$.

On the new ball $C$,
\[
\widetilde\Phi(a)\ge \theta\psi_a(a),
\]
whereas
\[
\sup_{\partial C}\widetilde\Phi
\le (1-\theta)\delta+\frac{\theta}{2}\psi_a(a).
\]
By the choice of $\delta$, $\widetilde\Phi(a)>\sup_{\partial C}\widetilde\Phi$, so $\widetilde\Phi$ attains a local maximum inside $C$. This new mode is distinct from the modes inside the balls $B_i$. The balls $B_1,\dots,B_M,C$ therefore give $M+1$ pairwise disjoint compact neighborhoods satisfying the strict boundary inequalities in Lemma~\ref{lem:generic-tilt}. Applying that lemma to these neighborhoods and normalizing the tilted function gives a Gaussian mixture density with all critical points nondegenerate and with at least $M+1$ nondegenerate modes. Its critical set is compact by Lemma~\ref{lem:compact-critical}; since all critical points are nondegenerate, the critical set is finite, and hence the modal set is finite. Thus one component can be added while producing a finite-modal-set mixture and increasing the number of nondegenerate modes by at least one. For arbitrary $\ell$, repeat this construction inductively: after each addition, retain disjoint balls witnessing the nondegenerate modes already obtained, and choose the next center outside a sufficiently large compact set containing all previous witness balls. The same boundary inequalities persist after taking the new component sufficiently remote and its weight sufficiently small. The homoscedastic statement follows because each new component may be chosen with the common covariance and the final tilt preserves homoscedasticity.

For part~(b), write
\[
F(x)=\sum_{i=1}^{k_1}\alpha_i\phi_i(x),
\qquad
G(y)=\sum_{j=1}^{k_2}\beta_j\psi_j(y),
\]
and define
\[
H(x,y)=F(x)G(y).
\]
Then
\[
H(x,y)=\sum_{i=1}^{k_1}\sum_{j=1}^{k_2}\alpha_i\beta_j\,\phi_i(x)\psi_j(y),
\]
and each product $\phi_i(x)\psi_j(y)$ is a Gaussian density on $\R^{d_1+d_2}$ with block-diagonal covariance, so $H$ is a $k_1k_2$-component Gaussian mixture density. If $F$ and $G$ are homoscedastic, all block-diagonal covariances are the same across $(i,j)$, so $H$ is homoscedastic.

We claim that
\[
\Mode(H)=\Mode(F)\times\Mode(G).
\]
If $x_\ast\in\Mode(F)$ and $y_\ast\in\Mode(G)$, choose neighborhoods $U$ of $x_\ast$ and $V$ of $y_\ast$ such that $F(x)\le F(x_\ast)$ on $U$ and $G(y)\le G(y_\ast)$ on $V$. Then
\[
H(x,y)=F(x)G(y)\le F(x_\ast)G(y_\ast)=H(x_\ast,y_\ast)
\qquad ((x,y)\in U\times V),
\]
so $(x_\ast,y_\ast)\in\Mode(H)$.

Conversely, if $(x_\ast,y_\ast)\in\Mode(H)$ and $x_\ast\notin\Mode(F)$, choose $x$ arbitrarily close to $x_\ast$ with $F(x)>F(x_\ast)$. Since $G(y_\ast)>0$,
\[
H(x,y_\ast)=F(x)G(y_\ast)>F(x_\ast)G(y_\ast)=H(x_\ast,y_\ast),
\]
contradicting local maximality. Thus $x_\ast\in\Mode(F)$, and similarly $y_\ast\in\Mode(G)$. This proves the claim. In particular, the Cartesian products of isolated modes are isolated modes, and at least $M_1M_2$ isolated modes are obtained. Isolated local maxima are strict, and products of strict local maxima of positive functions are strict local maxima. Applying Lemma~\ref{lem:generic-tilt} to these finitely many strict product modes and normalizing the tilted function gives a mixture with all critical points nondegenerate and at least $M_1M_2$ nondegenerate modes. By Lemma~\ref{lem:compact-critical}, its critical set is compact; since all critical points are nondegenerate, the critical set is finite, and hence the modal set is finite.
\end{proof}

\begin{proof}[Proof of Theorem~\ref{thm:padding-product-lower}]
Fix $s$ with $1\le s\le \min(d,\lfloor \log_2 k\rfloor)$, and write
\[
q=\left\lfloor\frac{d}{s}\right\rfloor,
\qquad
r=d\bmod s.
\]
Then $d=qs+r$ and $0\le r<s$. By \citet{RayRen2012}, for each integer $q'\ge 1$ there exists a two-component Gaussian mixture in $\R^{q'}$ with exactly $q'+1$ modes. Take $s-r$ copies in dimension $q$ and $r$ copies in dimension $q+1$. By part~(b) of Proposition~\ref{prop:padding-products}, their Cartesian product is a $2^s$-component Gaussian mixture in
\[
q(s-r)+(q+1)r=d
\]
dimensions with at least
\[
(q+1)^{s-r}(q+2)^r
\]
modes.

If $k=2^s$, this proves the displayed bound. If $k>2^s$, apply part~(a) of Proposition~\ref{prop:padding-products} with $\ell=k-2^s$ to add $k-2^s$ further components and at least $k-2^s$ further modes. This yields
\[
m(d,k)\ge k-2^s+(q+1)^{s-r}(q+2)^r
= k-2^s+\left(\left\lfloor\frac{d}{s}\right\rfloor+1\right)^{s-(d\bmod s)}
\left(\left\lfloor\frac{d}{s}\right\rfloor+2\right)^{d\bmod s}.
\]
This proves the stated bound for the chosen value of $s$.
\end{proof}

\subsection{Proof of Theorem~\ref{thm:simplex-seed}}\label{subsec:simplex-seed-proof}

\begin{proof}
Let $n=K-1\ge2$. Choose regular simplex vertices $v_1,\dots,v_K\in\R^n$ with
\[
\sum_{i=1}^K v_i=0,
\qquad
\|v_i\|=1,
\qquad
v_i\cdot v_j=-\frac1n\quad(i\ne j).
\]
For $\tau>0$, ignore the common normalizing constant and consider
\[
F_\tau(x)=\sum_{i=1}^K \exp\left(-\frac{\|x-v_i\|^2}{2\tau}\right).
\]
This is a positive scalar multiple of an equal-weight homoscedastic Gaussian mixture with common covariance $\tau I_n$.

First, $0$ is critical by symmetry. Since
\[
\sum_{i=1}^K v_iv_i^\top=\frac{K}{n}I_n,
\]
we get
\[
D^2F_\tau(0)=e^{-1/(2\tau)}
\left(\frac1{\tau^2}\sum_{i=1}^K v_iv_i^\top-\frac{K}{\tau}I_n\right)
=K e^{-1/(2\tau)}\left(\frac{1}{n\tau^2}-\frac1\tau\right)I_n.
\]
Thus $0$ is a strict local maximum whenever $\tau>1/n$.

We now show that, for $\tau>1/n$ sufficiently close to $1/n$, there is also a strict local maximum on each ray from the origin through a simplex vertex. It suffices to study $x=t v_1$, $0<t<1$; the same calculation applies after relabeling the vertices. Put
\[
A(t)=\exp\left(-\frac{(t-1)^2}{2\tau}\right),
\qquad
B(t)=\exp\left(-\frac{t^2+1+2t/n}{2\tau}\right).
\]
Then
\[
F_\tau(tv_1)=A(t)+nB(t).
\]
At $x=tv_1$, the first summand has weight $A(t)$ and the remaining $n$ summands have common weight $B(t)$. Since $\sum_{i=2}^K v_i=-v_1$,
\[
\nabla F_\tau(tv_1)=\frac1\tau\left(A(t)(v_1-tv_1)+B(t)\sum_{i=2}^K(v_i-tv_1)\right)
=\frac1\tau\left((1-t)A(t)-(1+nt)B(t)\right)v_1.
\]
Thus $t v_1$ is critical if and only if the scalar factor in this display vanishes. Equivalently, differentiating along the line $\R v_1$ gives
\[
\frac{\mathrm d}{\mathrm dt}F_\tau(tv_1)=\frac1\tau\left((1-t)A(t)-(1+nt)B(t)\right).
\]
Moreover,
\[
\frac{A(t)}{B(t)}=\exp\left(\frac{Kt}{n\tau}\right).
\]
Thus the critical equation is
\[
(1-t)e^{at}=1+nt,
\qquad
 a=\frac{K}{n\tau}.
\]
Let
\[
\ell_a(t)=\log(1-t)+at-\log(1+nt),
\qquad 0<t<1.
\]
The critical equation is $\ell_a(t)=0$. At the threshold $\tau=1/n$, one has $a=K=n+1$, and the Taylor expansion at $0$ is
\[
\ell_K(t)=\frac{n^2-1}{2}t^2+O(t^3).
\]
The coefficient is positive because $n\ge2$, so there is a small $t_0>0$ with $\ell_K(t_0)>0$. By continuity, $\ell_a(t_0)>0$ for all $a<K$ sufficiently close to $K$, equivalently for all $\tau>1/n$ sufficiently close to $1/n$. For such $a$,
\[
\ell_a'(0)=a-K<0,
\]
so $\ell_a(t)<0$ for all sufficiently small positive $t$. Also $\ell_a(t)\to-\infty$ as $t\uparrow1$. Hence $\ell_a$ has at least two zeros in $(0,1)$. Let $t_\ast$ be the right endpoint of a component of $\{t:\ell_a(t)>0\}$. Then $\ell_a(t_\ast)=0$. Since $\ell_a$ is real analytic and is not identically zero, its zeros are isolated. Thus, after restricting to a sufficiently small neighborhood of $t_\ast$, one has $\ell_a(t)>0$ for $t<t_\ast$ close to $t_\ast$ and $\ell_a(t)<0$ for $t>t_\ast$ close to $t_\ast$. The derivative of the one-variable function $t\mapsto F_\tau(tv_1)$ has the same sign as $\ell_a(t)$, so this restriction has a strict local maximum at $t_\ast v_1$.

It remains to check directions orthogonal to this line. Let $w$ be a unit vector with $w\cdot v_1=0$. For a single summand $E_i(x)=\exp(-\|x-v_i\|^2/(2\tau))$,
\[
D^2E_i(x)[w,w]
=E_i(x)\left(\frac{((v_i-x)\cdot w)^2}{\tau^2}-\frac{1}{\tau}\right).
\]
At $x=tv_1$, the $i=1$ term contributes $-A(t)/\tau$. For the remaining vertices, $E_i(tv_1)=B(t)$ and
\[
\sum_{i=2}^K (v_i\cdot w)^2
=w^\top\left(\sum_{i=2}^K v_iv_i^\top\right)w
=w^\top\left(\frac{K}{n}I_n-v_1v_1^\top\right)w
=\frac{K}{n}.
\]
Therefore
\[
D^2F_\tau(tv_1)[w,w]
=-\frac{A(t)}{\tau}+B(t)\left(\frac{K}{n\tau^2}-\frac{n}{\tau}\right).
\]
At the critical value $t=t_\ast$, the relation $A(t_\ast)/B(t_\ast)=(1+nt_\ast)/(1-t_\ast)$ gives
\[
D^2F_\tau(t_\ast v_1)[w,w]
=\frac{B(t_\ast)}{\tau}
\left(-\frac{1+nt_\ast}{1-t_\ast}+\frac{K}{n\tau}-n\right).
\]
Since $\tau>1/n$, we have $K/(n\tau)<K=n+1$, while $(1+nt_\ast)/(1-t_\ast)>1$. The expression in parentheses is therefore strictly less than
\[
-1+(n+1)-n=0.
\]
Thus the second derivative is negative in every unit direction orthogonal to $v_1$.

Finally, for every $t$ near $t_\ast$ and every vector $w$ satisfying $w\cdot v_1=0$,
\[
DF_\tau(tv_1)[w]
=\frac{1}{\tau}\sum_{i=1}^K E_i(tv_1)(v_i-tv_1)\cdot w
=\frac{B(t)}{\tau}\left(\sum_{i=2}^K v_i\right)\cdot w=0,
\]
because $\sum_{i=2}^K v_i=-v_1$. By continuity, the negative definiteness in the directions orthogonal to $v_1$ persists for $t$ in a small interval around $t_\ast$. Hence, for such $t$, the restriction of $F_\tau$ to the affine space $tv_1+\{w:w\cdot v_1=0\}$ has a strict local maximum at $tv_1$. Combining this with the strict local maximum of the one-dimensional restriction $t\mapsto F_\tau(tv_1)$ proves that $t_\ast v_1$ is a strict local maximum of $F_\tau$. By symmetry, $t_\ast v_i$, $i=1,\dots,K$, are $K$ distinct strict local maxima. Together with the central maximum at $0$, this gives at least $K+1$ strict local maxima. Choose pairwise disjoint compact neighborhoods witnessing these strict local maxima. Applying Lemma~\ref{lem:generic-tilt} to those neighborhoods and normalizing the tilted function gives a homoscedastic $K$-component Gaussian mixture with finite modal set and at least $K+1$ modes.
\end{proof}

\end{document}